\newcommand{\TheTitle}{Local theory for spatio-temporal canards \\ and delayed bifurcations}
\newcommand{\TheShortTitle}{Spatio-temporal canards and delayed bifurcations}
\newcommand{\TheAuthors}{D. Avitabile, M. Desroches, R. Veltz, M. Wechselberger}
\headers{\TheShortTitle}{\TheAuthors}
\title{{\TheTitle}}
\author{
  Daniele Avitabile
  \thanks{Department of Mathematics, Vrije Universiteit Amsterdam
   and MathNeuro Team, Inria Sophia Antipolis (\email{d.avitabile@vu.nl}).}
  \and
  Mathieu Desroches
  \thanks{MathNeuro Team, Inria Sophia Antipolis.}
  \and
  Romain Veltz
  \footnotemark[2]
  \and
  Martin Wechselberger
  \thanks{School of Mathematics and Statistics, The University of Sydney.}
}
\newcommand{\ind}{\operatorname{\mathbbm{1}}}
\newcommand{\iunit}{i}
\renewcommand{\phi}{\varphi}
\DeclareMathOperator{\spec}{\sigma}
\DeclareMathOperator{\range}{range}
\DeclareMathOperator{\id}{id}
\DeclareMathOperator{\real}{Re}
\DeclareMathOperator{\imag}{Im}
\DeclareMathOperator{\e}{e}
\DeclareMathOperator{\diag}{diag}
\DeclareMathOperator{\spn}{span}
\DeclareMathOperator{\meas}{meas}
\newcommand{\RSet}{\mathbb{R}}
\newcommand{\NSet}{\mathbb{N}}
\newcommand{\ZSet}{\mathbb{Z}}
\newcommand{\SSet}{\mathbb{S}}
\newcommand{\CSet}{\mathbb{C}}
\newcommand\blank{{\mkern 2mu\cdot\mkern 2mu}}
\newcommand{\eps}{\varepsilon}
\newcommand{\cE}{E}
\newcommand{\cM}{M}
\newcommand{\cV}{V}
\newcommand{\cX}{X}  
\newcommand{\cY}{Y}  
\newcommand{\cZ}{Z}
\newcommand{\norm}[1]{\left \| #1 \right \|}
\crefname{hypothesis}{Hypothesis}{Hypotheses}
\crefname{problem}{Problem}{Problems}
\crefname{remark}{Remark}{Remarks}
\newcommand{\vecd}[2]{\left(#1, #2\right)}
\begin{document}
\maketitle

\begin{abstract}
  We present a rigorous framework for the local analysis of canards and slow passages
  through bifurcations in a wide class of infinite-dimensional dynamical systems
  with time-scale separation. The framework is applicable to models where an
  infinite-dimensional dynamical system for the fast variables is coupled to a
  finite-dimensional dynamical system for slow variables. We prove the existence of
  centre-manifolds for generic models of this type, and study the reduced,
  finite-dimensional dynamics near bifurcations of (possibly) patterned steady
  states in the layer problem. Theoretical results are complemented with detailed
  examples and numerical simulations covering systems of local- and
  nonlocal-reaction diffusion equations, neural field models, and delay-differential
  equations. We provide analytical foundations for numerical observations recently
  reported in literature, such as spatio-temporal canards and slow-passages through
  Hopf bifurcations in spatially-extended systems subject to slow parameter
  variations. We also provide a theoretical analysis of slow passage through a Turing
  bifurcation in local and nonlocal models.
\end{abstract}

\section{Introduction}\label{sec:intro}

Many physical and biological systems consist of processes that evolve on disparate
time- and/or length-scales and the observed dynamics in such systems reflect these
multiple-scale features as well. Mathematical models of such multiple-scale systems
are considered singular perturbation problems with two-scale problems as the most
prominent.

In the context of ordinary differential equations (ODEs),  singular perturbation
problems are usually discussed under the assumption that there exists a coordinate
system such that observed slow and fast dynamics are represented by corresponding
slow and fast variables globally, i.e., the system of ODEs under consideration is
given in the \emph{standard (fast) form} 
\begin{equation}\label{eq:slowFastFin}
\begin{aligned}
  \dot{u}& = F(u,v,\mu,\eps)\\
  \dot{v}& = \eps G(u,v,\mu,\eps),
\end{aligned}
\end{equation}
where $(u,v)\in\RSet^n\times\RSet^m$, $F$ and $G$ are smooth functions on
$\RSet^n\times\RSet^m\times\RSet^p\times\RSet_{>0}$, and $0<\eps\ll 1$ is the timescale separation
parameter. 
The {\em geometric singular perturbation theory (GSPT)} to analyse such finite
dimensional singular perturbation problems  is well established. It was pioneered by
Neil Fenichel in the 1970s \cite{Fenichel:1979} and is based on the notion of {\em
normal hyperbolicity}\footnote{Precise definitions for this and other GSPT concepts
can be found in the Appendix.} which refers to a spectral property of the equilibrium
set $S$ of
the \emph{layer problem} (also known as the \emph{fast subsystem}) obtained in the
limit $\eps\to 0$ in \eqref{eq:slowFastFin}.
In GSPT, this set $S:=\{(u,v) \in \RSet^n \times \RSet^m \colon F(u,v,\mu,0)=0\}$ is
known as the \emph{critical manifold}
since it is assumed to be an $m$-dimensional differentiable manifold. Normal
hyperbolicity refers then to the property that the (point) spectrum of the critical
manifold is bounded away from the imaginary axis (on every compact subset of $S$). 

\emph{Loss of normal hyperbolicity} is a key feature in finite dimensional
singular perturbation problems for rhythm generation as observed, for example, in the
famous van der Pol relaxation oscillator. An important question in this context is
how the transition from an excitable to a relaxation oscillatory state occurs in
such a system. The answer is (partially) given by the \emph{canard phenomenon} which
was discovered by French mathematicians who studied the van der Pol relaxation
oscillator with constant forcing \cite{BCDD1981}. They showed an explosive growth
of limit cycles from small Hopf-like to relaxation-type cycles in an exponentially
small interval of the system parameter. This parameter-sensitive behaviour is hard
to observe, and the corresponding solutions in phase space resemble (with a good
portion of imagination) the shape of a `duck' which explains the origin of the
nomenclature. Note, this phenomenon is degenerate since it only exists in
one-parameter families of slow-fast vector fields in $\RSet^2$. Furthermore, it is
always associated with a nearby \emph{singular} Hopf bifurcation, i.e. frequency and
amplitude of the Hopf cycles depend on the singular perturbation parameter $\eps\ll
1$. A seminal work on van der Pol canards~\cite{BCDD1981} was obtained
by Eric Beno{\^i}t, Jean-Louis Callot, Francine and Marc Diener through
\textit{non-standard analysis} methods and soon after similar results were reached
using standard matched asymptotics techniques by Eckhaus~\cite{E1983}.
A decade later, further results were obtained using \textit{geometric
desingularization} or \textit{blow-up} by Dumortier and Roussarie~\cite{DR96}, and
soon after by Krupa and Szmolyan~\cite{KS01a,KS01c,KS01b}.

Fortunately, this degenerate situation does not occur in systems with two (or more)
slow variables where canards are generic, i.e.~their existence is insensitive to
small parameter perturbations. Beno\^{i}t \cite{Ben83} was the first to study generic
canards in $\RSet^3$.  He also observed how a certain class of generic canards (known
as canards of {\em folded node} type) cause unexpected rotational properties of
nearby solutions. Extending geometric singular perturbation theory to canard problems
in $\RSet^3$, Szmolyan and Wechselberger \cite{SW01} provided a detailed geometric
study of generic canards.
In particular, Wechselberger \cite{W05} then showed that rotational properties of
folded node type canards are related to a complex  local geometry of invariant
manifolds near these canards and associated bifurcations of these canards.
Coupling this local canard structure with a global return mechanism can explain
complex oscillatory patterns known as mixed-mode oscillations (MMOs); see, e.g.,
\cite{W05,BKW06,Desroches2012}. This is closely related to canards of folded node and
{\em folded saddle-node} type. Firing threshold manifolds or, more broadly, transient
separatrices in slowly modulated excitable systems form another important class of
applications \cite{WALC11,WMR13} which is a different type of canard mechanism which
includes \textit{folded saddle} canards.

A necessary (but not sufficient) condition for canards is the crossing of a real
eigenvalue in the point spectrum of $S$ which leads geometrically  to a folded
critical manifold. Another important singular perturbation phenomenon is the {\em
delayed loss of stability through a Hopf bifurcation
}\cite{neishtadt87,neishtadt88,HKSW16} associated with the crossing of a complex
conjugate pair of eigenvalues in the spectrum of $S$. We note that this phenomenon
refers to a Hopf bifurcation in the layer problem which is, in general,  different to
a singular Hopf bifurcation in the full system \eqref{eq:slowFastFin} with $0<\eps\ll
1$, as the two bifurcations are not necessarily related. A prime application of this
phenomenon is associated with \emph{elliptic bursters} in neurons, see
for instance \cite{IZH07}.

Canard theory has been extended to arbitrary finite dimensions in \cite{W12}, 
i.e.~dimensional restrictions on the slow variable subspace are not necessary to study
these problems. Slow-fast theory for ODEs plays also an important role in the construction
of heterogeneous (and possibly relative) equilibria in PDEs posed on the real line.
This construction relies on a
spatial-dynamic ODE formulation \cite{sandstede2002stability}: stationary states and
travelling waves are identified with homo- or hetero-clinic connections of an ODE in
the spatial variable, which may exhibit spatial-scale separation. These
\textit{global orbits} may display canard segments, and can be constructed using GSPT
theory for ODEs
\cite{doelman1997pattern,buv2006canard,marszalek2012fold,harley2014existence,%
carter2016stability,avitabile2017ducks,carter2017transonic}.

A further interesting class of intermediate problems between ODEs and the ones
considered here arise when the spatial-dynamical system obtained in travelling waves
problems is itself infinite-dimensional, or does not have an obvious phase space. In
this context, a GSPT may not be readily available, and recent contributions in this
area have been provided by Hupkes and Sandstede for lattice equations
\cite{hupkes2010traveling,hupkes2013stability}, and by Faye and Scheel for nonlocal
equations with convolutions~\cite{faye2018center}.

More generally, the literature on infinite-dimensional slow-fast
dynamical systems is less developed than its ODE counterpart. The papers by Bates and
Jones~\cite{Bates:1989jn} and by Bates, Lu, and Zeng~\cite{Bates:1998iw},
include historical background and references to the state-of-the-art
literature (as of the end of 1980s and 1990s, respectively) on invariant manifolds
for infinite-dimensional systems, including in
particular~\cite{Chow:1988co,Vanderbauwhede:1992ft,Bates:1994ix,Wayne:1997ho,VanMinh:2004kz,Henry:2006i}.
In addition, Bates, Lou and Zeng~\cite{Bates:1998iw} provided a seminal contribution
for the persistence of invariant manifolds for problems posed on Banach spaces. The
theory presented therein is general, albeit the derivation of GSPT for
infinite-dimensional equations is problem-dependent (see, for instance, the one
derived by Menon and Haller for the Maxwell-Bloch equations~\cite{Menon:2001if}).

The construction of orbits displaying canard segments or delayed bifurcations in the
infinite-dimensional setting remains an open problem, with two important obstacles:
(i) the loss of normal hyperbolicity, and (ii) the connection of slow and fast orbit
segments with the view of obtaining a global, possibly periodic, orbit. The present
paper addresses the
former, and provides a general local theory for infinite-dimensional problems of the
following type
\begin{equation}\label{eq:slowFastInf-2}
\begin{aligned}
  \dot{u}& =  Lu + R(u,v,\mu,\eps) := F(u,v,\mu,\eps)\\
  \dot{v}& = \eps G(u,v,\mu,\eps)
\end{aligned}
\end{equation}
where the fast variables $u$ belong to a Banach space $X$, and the slow variables $v$
to $\RSet^m$, for some $m \in \NSet$. The vector $\mu\in\RSet^p$, $p \in \NSet$,
refers to a set of (possible) aditional control parameters.\footnote{Precise
definitions of the infinite dimensional dynamical system under study will be given in
\cref{sec:generalSetup}.} We say that \cref{eq:slowFastInf} is a system of
$m$-slow, \textit{$\infty$-fast differential equations}. 

As we discuss below, systems of the form \cref{eq:slowFastInf-2} are prevalent in the
limited literature currently available on canards and delayed bifurcations in
infinite-dimensional dynamical systems. The first contribution to
this topic was given in the early 1990s by Su~\cite{Su199438}, who studied a system
of type \cref{eq:slowFastInf-2} in which the fast variables $u=(u_1,u_2)$ evolve
according to
the FitzHugh--Nagumo model, with diffusivity in the voltage $u_1$, and diffusionless
recovery variable $u_2$, subject to a heterogeneous, slowly-increasing current with
amplitude $v$. For this setup, Su proved the existence of orbits displaying a delayed
passage through a Hopf bifurcation. 
Two decades later, de Maesschalck, Kaper, and Popovic \cite{deMaesschalck2009canards}
proved existence of trajectories modelled around the canard phenomenon in the
nonlinear example
\[
  \varepsilon \partial_t u = \varepsilon^\mu \partial_{xx} u + V(u,x,t,\eps) u,
  \qquad \mu >0,
\]
using the specific scaling in conjunction with the method of lower and upper solutions.
An intuition of this group was to bypass the difficulties associated to the loss of
normal hyperbolicity, by hard-wiring a slow, non-monotonic evolution in the function
$V$, as opposed to prescribing a coupled dynamic for a slow variable $v$, as we do here. 

In 2015, Tzou, Ward and Kolokolnikov \cite{tzou2015slowly} studied slow passages
through a Hopf bifurcation in a reaction-diffusion PDE with a slowly-varying
parameter, using asymptotic methods which Bre\~na-Medina, Avitabile, Champneys and
Ward concurrently adopted to investigate slow-fast orbits connecting patterned, metastable
states in plant dynamics \cite{brena2015stripe,avitabile2018spot}. In the same year
Chen, Kolokolnikov, Tzou and Gai gave numerical evidence of slow passages through
Turing bifurcations in an advection--reaction--diffusion equation with slowly-varying
paraters subject to noise, with applications in vegetation
patterns~\cite{chen2015patterned}.

In 2016, Krupa and Touboul studied canards in a delay-differential equation (DDE)
with slowly-varying parameters. This work differs from the other mentioned in this
literature review, because it does not deal with a spatially-extended system, albeit
the fast variable $u$ lives in a Banach space.

In 2017, Avitabile, Desroches and Knobloch introduced canards in spatially extended
systems (termed \textit{spatio-temporal canards}), producing numerical evidence of
cycles and transients containing canard segments of folded-saddle and folded-node
type. They studied a $2$-slow $\infty$-fast system coupled to a  neural-field
equation posed on the real line and on the unit sphere. Analytical predictions for
spatio-temporal canards were derived in a regime where interfacial dynamics provides
a dimensionality reduction of the problem, amenable to standard GSPT analysis.

More recently, in 2018, two papers obtained results similar to the ones in
Su~\cite{Su199438} but on different models, and proposing different techniques: Bilinsky
and Baer~\cite{Bilinsky2018} used a WKB-expansion to study spatially-dependent
buffer points (buffer curves) in heterogeneous reaction-diffusion equations;
Kaper and Vo~\cite{kaper2018delayed} found numerical evidence of delayed-Hopf
bifurcations in several examples of heterogeneously, slowly-driven reaction-diffusion
systems, and give a formal, accurate, asymptotic argument to compute the associated
buffer curves.\\

\noindent

The main contribution of this article is the derivation of a local theory
for canards and slow passages through bifurcations in $\infty$-fast $m$-slow
systems: not only is this a natural and necessary step for the construction of global
orbits, but it is an achievable target for generic models. A key ingredient of our
framework is a centre-manifold reduction of \cref{eq:slowFastInf-2}, 
which is an infinite-dimensional system. A general centre-manifold theory is
available (see a recent literature review by Roberts \cite{roberts2019}), but has not
been used for infinite-dimensional problems of type~\cref{eq:slowFastFin} to overcome
the loss of normal hyperbolicity. We proceed
systematically, as follows:
\begin{enumerate}
  \item The layer problem $\dot u = F(u,v,\mu,0)$ is a differential equation in $X$,
    with parameters in $\RSet^{m+p}$. We assume that this system admits a bifurcation
    of a steady state $u_*$ at $(v,\mu) = (v_*,\mu_*)$. 
    Thus the linear operator $D_uF(u_*,v_*,\mu_*,0)$ has a non-empty centre spectrum
    and normal hyperbolicity fails. Note that $u_*$ is constant in time, but need
    not be constant in space ($u_*$ is be a heterogeneous steady state of the layer
    problem).
  
  \item Under the assumption that $D_uF$ has a spectral gap, and its
    centre spectrum has finite dimension $n \in \NSet$, we prove the existence of a
    $(\mu,\eps)$-dependent centre manifold \emph{of the original problem} \cref{eq:slowFastInf-2}.
  \item We perform a centre-manifold reduction: in a
    neighbourhood of $(u_*,v_*,\mu_*,0) \in \cX \times \RSet^{m+p+1}$, the $m$-slow
    $\infty$-fast system reduces to an $m$-slow $n$-fast one. The latter inherits the
    slow-fast structure of the former, because our reduction acts trivially on the
    slow variables $v$.
  \item We derive a normal form for the reduced system, which still exhibits loss of
    normal hyperbolicity at the origin. Existing ODE results can however be used to
    prove the local existence of canards and slow passages through bifurcations.
\end{enumerate}

Centre-manifold theory is at the core of the procedure described above, and our
treatment relies on existing tools on this topic. In particular, we adopt the
notation and formalism developed by Vanderbauwhede and
Iooss~\cite{Vanderbauwhede:1992ft}, and expanded in
great detail in a book by Haragus and Iooss~\cite{Haragus2010}. 
We build our results around the ones presented in the latter, to which we refer for
further reading.

The theory presented here justifies rigorously numerical evidence presented in recent
literature on the subject, and provides new results: we do not assume weak
diffusivity in the linear operators, nor locality of the linear/nonlinear operators,
and we provide theory and numerical examples for generic systems of
integro-differential equations, local- and nonlocal-reaction diffusion problems,
and DDEs. The centre-manifold reduction can be carried out
for generic singularities, and this enables us to present theory for delayed-passages
through Turing bifurcations, which had not been studied analytically before, and are
specific to partial differential equations (PDEs).

We aim to present content in a format that is hopefully useful to readers working in
both finite- and infinite-dimensional slow-fast problems, and this impacts on the
material and style of the paper. For instance, centre-manifold reductions
for infinite-dimensional systems rely on checking a series of technical
assumptions on the layer problem, hence we took some measures to make the material
self-contained, and to template our procedure for a relatively large class of
problems. We believe that several
applications should be covered by our treatment, or can be derived with minor
modifications. For these reasons we structured
the paper as follows: in \cref{sec:numericalExamples} we provide a suite of
numerical examples that show canard phenomena and related slow passage through Hopf
and Turing bifurcations in PDEs, DDEs and integro-differential equations; in
\cref{sec:generalSetup} we introduce the notation and functional-analytic setup used
in the following sections; \cref{sec:centreManifoldReduction} exposes steps 1--3 of the
procedure described above, for generic $m$-slow, $\infty$-fast systems;
\cref{sec:reductions} describes step 4 in the procedure for fold, Hopf, and Turing
bifurcations in generic systems; in \cref{sec:applications} we go through several
cycles of steps 1--4, showing how they can be used in the concrete
applications presented in \cref{sec:numericalExamples}; we conclude in
\cref{sec:conclusions}.

\section{Numerical examples}
\label{sec:numericalExamples}
We provide numerical examples of system of $\infty$-fast, $m$-slow variables to which
our analytical framework is applicable. In this section we discuss evidence obtained
for slow passage through saddle-node, Hopf, and Turing bifurcations in various
models. All computations involve numerical bifurcation analysis of steady states or
periodic orbits of the layer problem, and time-stepping of the full problem. All
computations in spatially extended systems are performed using a MATLAB suite for
generic problems developed in \cite{rankin2014continuation} (see
\cite{daniele_avitabile_2020_3821169} for a recent tutorial). Computations for the DDE are
performed using DDE-BIFTOOL \cite{engelborghs2001dde}.

\subsection{Folded saddle and folded node canards in a neural field model}
\begin{figure}
  \centering
  \includegraphics{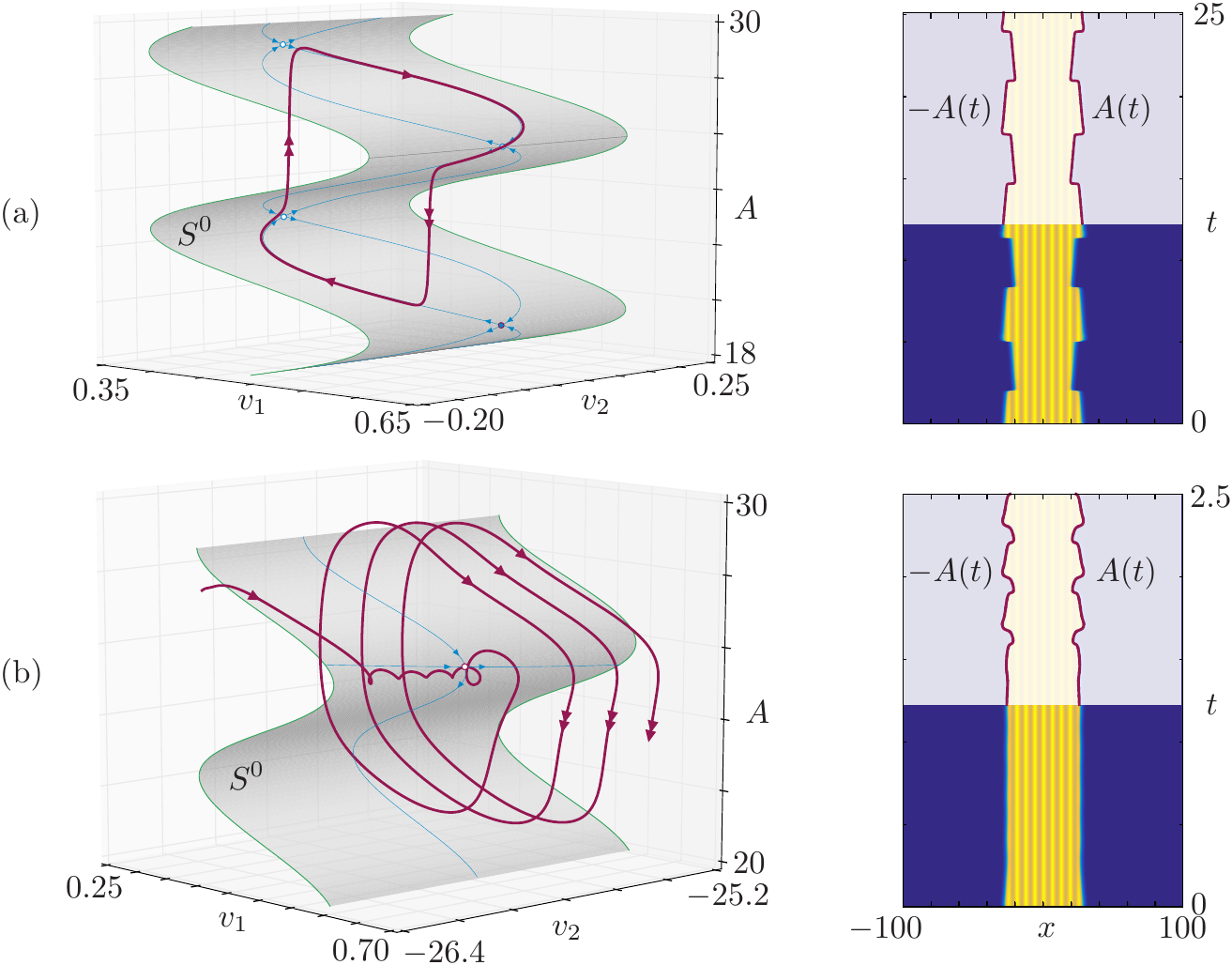}
  \caption{Examples of spatio-temporal canards in \cref{eq:NF}. (a) Periodic orbit
  containing a folded-saddle canard segment obtained with $\theta =
  \theta_s$, $w(x,y) = \kappa_1 \exp(-|x-y|)(\kappa_2 + \kappa_3 \cos(y/\kappa_4))$,
  $\Omega = \RSet/2l\ZSet$, $l=100$, $a =
0.5$, $b=c=0$, $\kappa_1 = 0.5$, $\kappa_2=1$, $\kappa_3=0.5$, $\kappa_4=1$, $\mu =50$. Left: orbit
in the $(A,v_1,v_2)$-space, with $A$ defined in \cref{eq:levelSet}, superimposed on
the critical manifold $S_0$ of the fast subsystem associated to \cref{eq:NF} in the
variables $(A,v_1,v_2)$. Right: corresponding spatio-temporal solution, in which the
level sets are shown for illustrative purposes. (b) Examples of an orbit containing a
folded-node canard segment, obtained from the example in (a) by setting $a=c=1$,
$b=0$. Adapted from \cite{avitabile2017spatiotemporal}.}
  \label{fig:NeuralFieldRing}
\end{figure}
We begin with an integro-differential equation from mathematical neuroscience, namely
a neural field model posed on a compact domain $\Omega \subset
\RSet^d$,
\begin{equation}\label{eq:NF}
  \begin{aligned}
    \partial_t u & = -u + \int_\Omega w(\blank,y) \theta(u(y,t), v_1 ) \,d\rho(y)  
		 && \textrm{in $\Omega \times \RSet_{>0}$,}\\
  \dot v_1 & = \eps \bigg( v_2 + c \int_\Omega \theta(u(y,t),v_1) \, d\rho(y) \bigg) 
	       && \textrm{in $\RSet_{>0}$,}\\
  \dot v_2 & = \eps \bigg(-v_1 + a + b \int_\Omega \theta(u(y,t),v_1)\, d\rho(y)
                            \bigg)  	      
	   && \textrm{in $\RSet_{>0}$,}
  \end{aligned}
\end{equation}
where $\theta$ model a firing rate function and $w$ the synaptic connections (see
\cite{coombes2014neural} for a recent review on neural fields). The firing rate
function is commonly modeled via a sigmoidal or a Heaviside function
\[
  \theta_s(u,h) = \frac{1}{1+\exp(-\mu (u-h))}, \quad
  \theta_{H}(u,h) = H(u-h).
\]
In contrast to standard neural field models, the firing-rate threshold $v_1$
oscillates slowly and harmonically in time if $b=c=0$, and its evolution is coupled
to the fast neural field activity variable $u$ if $b$ or $c$ are nonzero.
\begin{figure}
  \centering
  \includegraphics{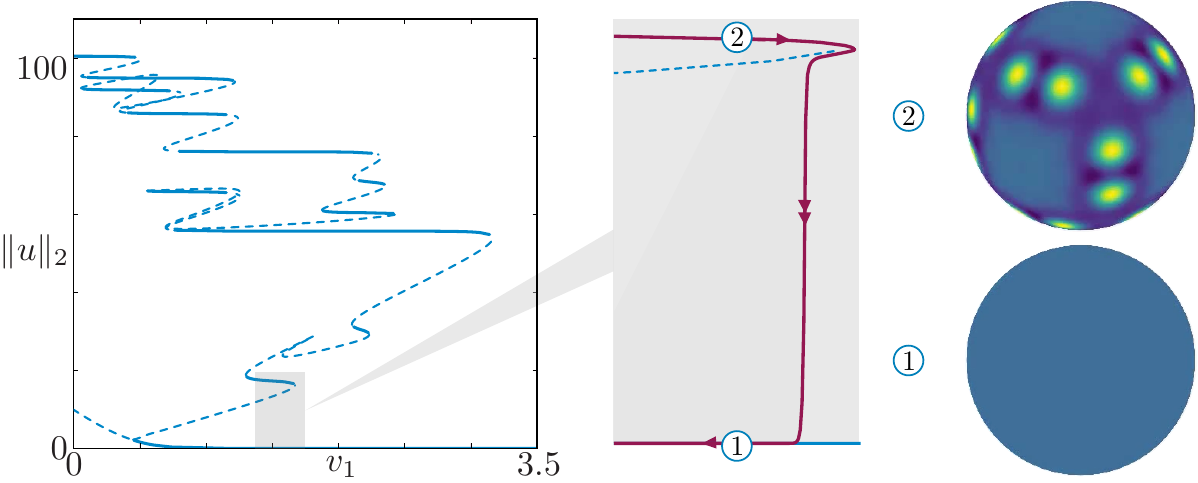}
  \caption{Example of solutions containing spatio-temporal canards of folded-saddle
  type for the neural field \cref{eq:NF} posed on $\Omega = \SSet^2$. Left
  bifurcation diagram of the fast subsystem. Centre: trajectory of the model with
  $\eps \ll 1$, near a fold of the fast subsystem. Right: exemplary patterns. We
refer the reader to \cite{avitabile2017spatiotemporal}, from where this figure is
adapted, for details about parameters and computations.}
  \label{fig:NeuralFieldSphere}
\end{figure}

In \cite{avitabile2017spatiotemporal}, spatio-temporal canards were introduced and
analysed using interfacial dynamics, valid in the case of Heaviside firing rate
$\theta_{H}$ posed on one-dimensional domains. If $\Omega \subset \RSet$ and $\theta
= \theta_{H}$, one can construct even solutions $u(x,t) = u(-x,t)$ from of their
$v_1$-level set on $\RSet_{\geq 0}$,
\begin{equation}\label{eq:levelSet}
  A(t) = \{ x \in \RSet_{\geq 0} \colon u(x,t) = v_1(t) \}.
\end{equation}
If the $v_1$-level set has a single connected component, then $A(t)$ is a scalar function, and an
exact evolution equation in the
variable $(A,v_1,v_2) \in \RSet^3$ can be derived for the model and studied using
GSPT for ODEs. Solutions $(A(t),v_1(t),v_2(t))$,
with canard segments correspond spatio-temporal canards of the original model.

In the present paper we study the case $\theta = \theta_s$, on generic
domains $\Omega$ for which we
report numerical simulations adapted from \cite{avitabile2017spatiotemporal} in
\cref{fig:NeuralFieldRing,fig:NeuralFieldSphere}: spatio-temporal canards of
folded-saddle and
folded-node type are predicted from the theory, are found numerically with a
sigmoidal firing rate (\cref{fig:NeuralFieldRing}), and they also persist in
higher spatial dimensions, where the theory does not apply
(\cref{fig:NeuralFieldSphere}). In the present paper we will introduce a rigorous
treatment of this problem, valid for generic firing rates, kernels, and domains. 

The
functional setting for this problem will be the Banach space of continuous,
real-valued functions defined on $\Omega$. The theory developed in the following
sections
is also
applicable to the Swift--Hohemberg equation subject to slow parameter variation, as
studied in \cite{gandhi2015new}, albeit a natural functional setting for this
problem requires Hilbert spaces. We shall give below several examples for problems on
Hilbert spaces.

\subsection{Slow passage through a Hopf bifurcations in PDEs and DDEs} 
\begin{figure}
  \centering
  \includegraphics{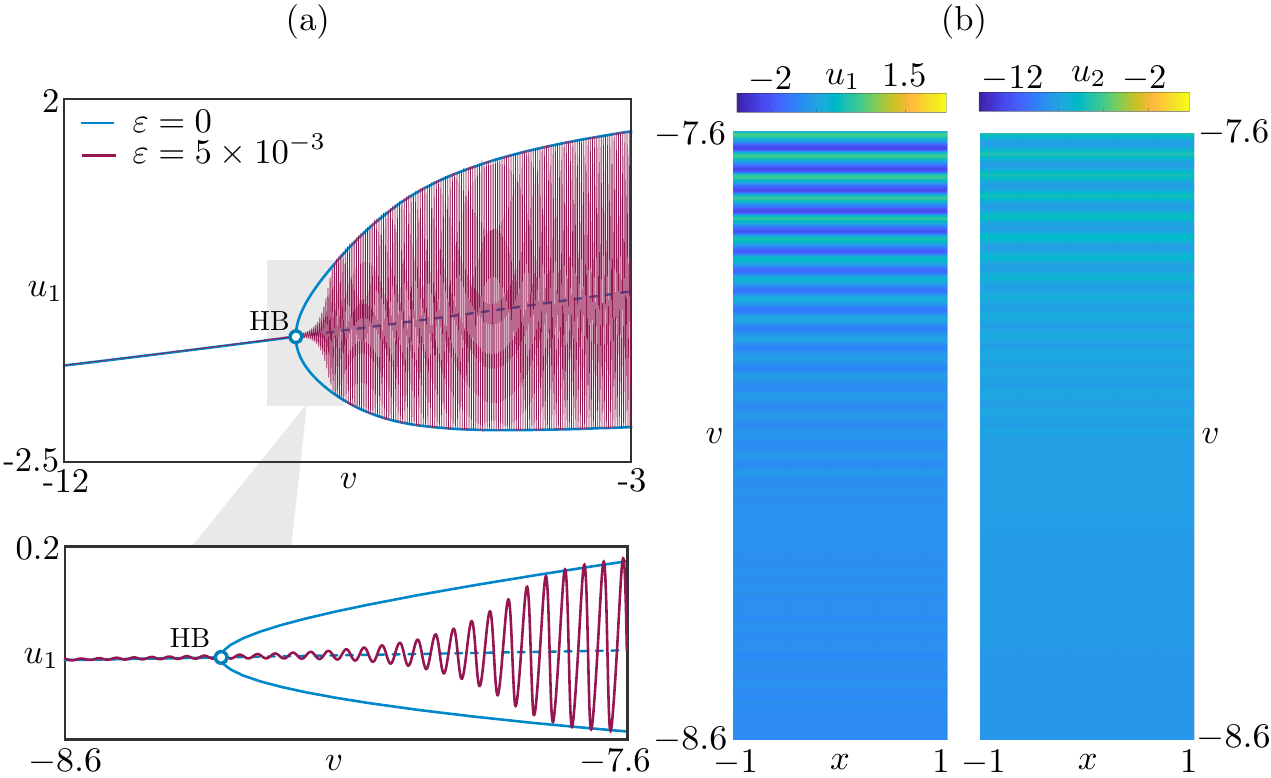}
  \caption{Slow passage through a Hopf bifurcation in the Fitzhugh--Nagumo model
  \cref{eq:FN}. Parameters: $d_1=0.1$, $b=0.1$, $c=0.05$, $\eps = 0, 5 \times 10^{-3}$.}
  \label{fig:FHNSlowPassageHopf}
\end{figure}
A second class of examples pertains slow passages through Hopf bifurcations in
infinite-dimensional systems. A first numerical example is given by the following
reaction-diffusion PDE with slowly-varying parameters:
\begin{equation}\label{eq:FN}
  \begin{aligned}
    \partial_t u_1 & = d_1\partial_x^2 u_1 + u_1 -u_1^3/3 -u_2 + v
		   && \text{ in $(0,2\pi) \times \RSet_{>0}$,} \\
    \partial_t u_2 & =  u_1 + c - b u_2                       
		   && \text{ in $(0,2\pi) \times \RSet_{>0}$,} \\
            \dot v & = \eps                                     
		   && \text{ in $\RSet_{>0}$,}
  \end{aligned}
\end{equation}
subject to periodic boundary conditions
\[
  u_1(0,t) = u_1(2\pi,t), \qquad u_2(0,t) = u_2(2\pi,t),
  \qquad t \in \RSet_{>0},
\]
where $v$ plays the role of an external current. In this case we will show that the
fast subsystem admits a homogeneous steady state undergoing a Hopf bifurcation, and
will apply our theory to conclude that system \cref{eq:FN} displays slow passage
through a Hopf bifurcation, of which we give numerical evidence in
\cref{fig:FHNSlowPassageHopf}.  This example shows a slow-passage through a
bifurcation of a \emph{homogeneous} steady state, because the boundary conditions
allow it. We highlight that the theory presented below is equally
applicable to slow-passages through \emph{heterogeneous} (patterned) steady states,
of which the neural field problem is an example.

The analytical treatment of this problem will be done on Sobolev spaces, and the
presentation of this example assumes no prior knowledge of Centre Manifold reduction
for PDEs, but only basic notions in functional analysis. The theory developed in the
next sections is applicable to generic reaction--diffusion PDEs, and in particular
to the examples considered in \cite{kaper2018delayed}. 

A separate analytical treatment is instead required for the slow-passage through
a Hopf bifurcation in DDEs, for which we consider the model problem
\begin{equation}\label{eq:DDE}
\begin{aligned}
  \dot{u}(t)         & = v(t) u(t) - u^3(t) - u(t-\tau)+d 
		     && \textrm{in $\RSet_{>0}$}\,,\\
\dot{v}(t) & = \eps
		     && \textrm{in $\RSet_{>0}$}\,.
\end{aligned}
\end{equation}
The fast subsystem of~\cref{eq:DDE} is therefore the one-component delay-differential
equation, in which the trivial equilibrium undergoes a first Hopf bifurcation at
$v=-3/4$. The slow
drift on $\alpha$ induces the slow passage presented in \cref{fig:DDE}.

\begin{figure}
  \centering
  \includegraphics{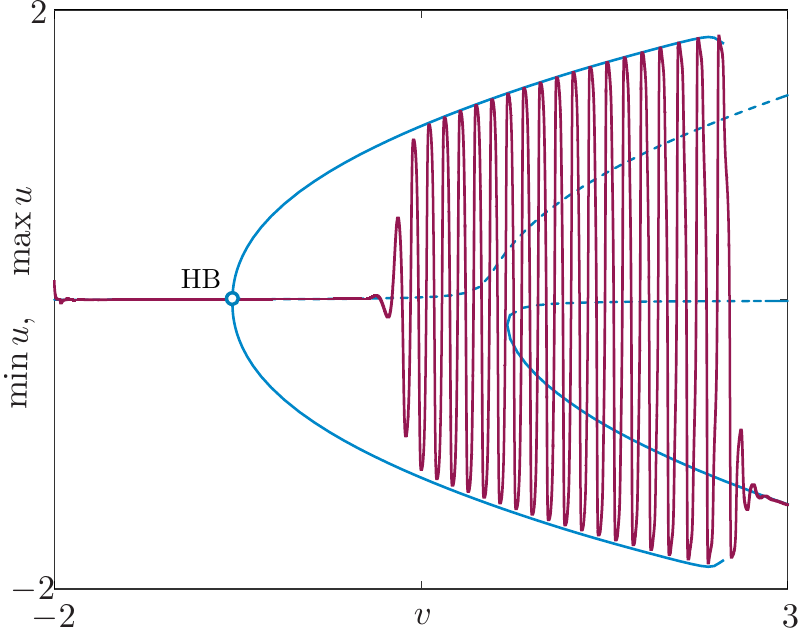}
  \caption{Slow passage through a Hopf bifurcation in the delay-differential
    equation \cref{eq:DDE}.
  Parameter values: $\eps=0.01$, $\tau=4$, $d=0.01$.}
	\label{fig:DDE}
\end{figure}

\subsection{Slow passage through Turing bifurcations in local- and nonlocal-reaction
diffusion model}
Since centre-manifold reductions are possible for generic bifurcations in PDE or
integro-differential equations, we
consider the case of a slow passage through a Turing bifurcation. In general, we will study problems of the
following type
\begin{equation}
  \begin{aligned}
    \partial_ t u & = D \partial_x^2 u + f(u,v) 
		  && \textrm{in $\Omega \times \RSet_{>0}$}, \\
	   \dot v & = \eps
		  && \textrm{in $\RSet_{>0}$,} 
  \end{aligned}
\end{equation}
subject to suitable boundary conditions, where $u$ denotes the concentrations of $q$
reactants, and $D= \diag\{d_1,\ldots,d_q\}$ with $d_i>0$ for all $i \in \NSet_q$, is
a diffusion matrix. While developing a theory for generic models of this type is
possible, we concentrate on two main examples, which can easily be generalised.

\subsubsection{Nonlocal-reaction diffusion model} \label{sec:numericsNonlocalTuring}
The first example under consideration is a model with nonlocal reaction, which has
been studied for $\eps=0$ in the context of population dynamics
\cite{Genieys:2006ji,Volpert:2011wt}. 
\begin{equation}\label{eq:nonlocalRD}
  \begin{aligned}
    \partial_t u & = d \partial_x^2 u + (v-b) u - u \int_\Omega w(\blank - y)
    u(y,t)\,dy, \qquad  
		 && \textrm{in $\Omega \times \RSet_{>0}$}\\
      \dot v & = \eps
		 && \textrm{in $\RSet_{>0}$}
  \end{aligned}
\end{equation}
where $\Omega = \RSet/2l\ZSet$, and $w$ is an interaction kernel. The model undergoes a Turing-like bifurcation of
a homogeneous steady state, and we show numerical results of a slow-passage through
this bifurcation (see
\cref{fig:nolocalRDSlowPassage}). We will derive theoretical results for this
$1$-component, nonlocal problem, and present a strategy that is easily adaptable to generic $d$-component
reaction diffusion equation subject to Dirichlet, Neumann, or mixed homogeneous
boundary conditions, under mild hypotheses on $f$. This strategy can be used also in
reaction-diffusion problems undergoing other bifurcations.
\begin{figure}
  \centering
  \includegraphics{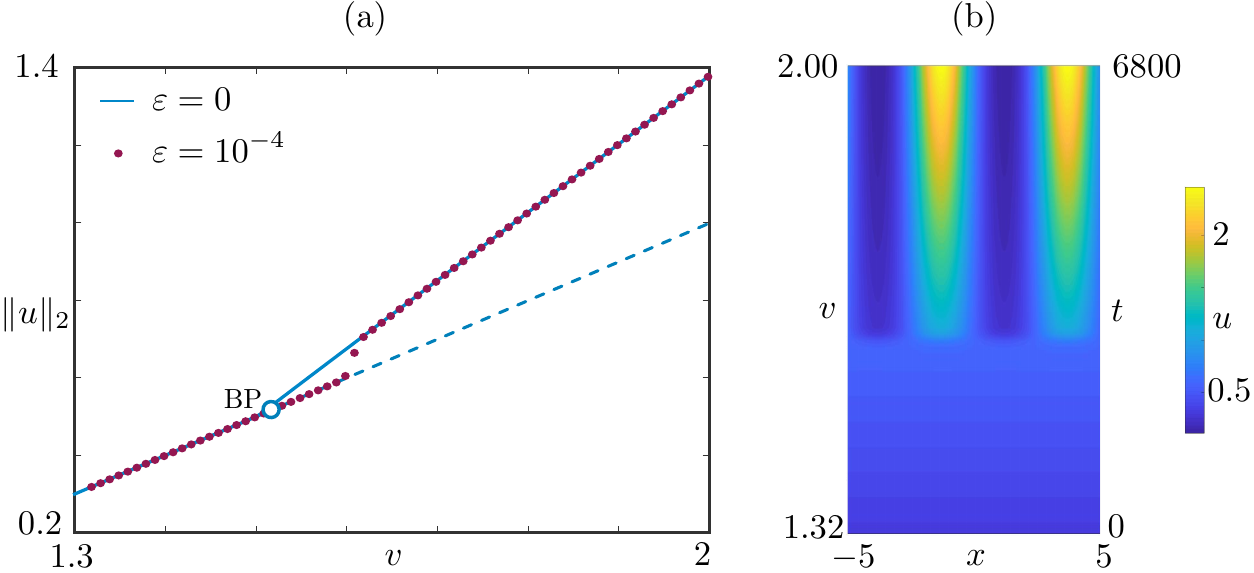}
  \caption{Slow passage through a Turing-like bifurcation for the nonlocal
    reaction-diffusion equation \cref{eq:nonlocalRD} (see
    \cite{Genieys:2006ji,Volpert:2011wt} for details). (a): Bifurcation diagram of
    the fast subsystem, where $\eps = 0$ and $v$ is a parameter (blue); the
    homogeneous steady state undergoes a Turing-like bifurcation at $v\approx 1.5$.
    A time simulation with slowly-varying $v$, where $\Vert u \Vert_2$ changes
    slowly, displays a slow passage through the bifurcation (red). (b) Spatiotemporal
    profile of the time simulation. On the vertical
    axis we report both $t$ and $v$, which increases linearly. Computations are
    performed with interaction function $w(x) = (2h)^{-1} \ind_{(-h,h)}(x)$ and
  parameters $h = 3$, $d=0.05$, $b=1$, $l=5$ and $\eps=0, 10^{-4}$.}
  \label{fig:nolocalRDSlowPassage}
\end{figure}

\subsubsection{Schnakenberg model} \label{sec:numericsSchnakenberg}
\begin{figure}
  \centering
  \includegraphics{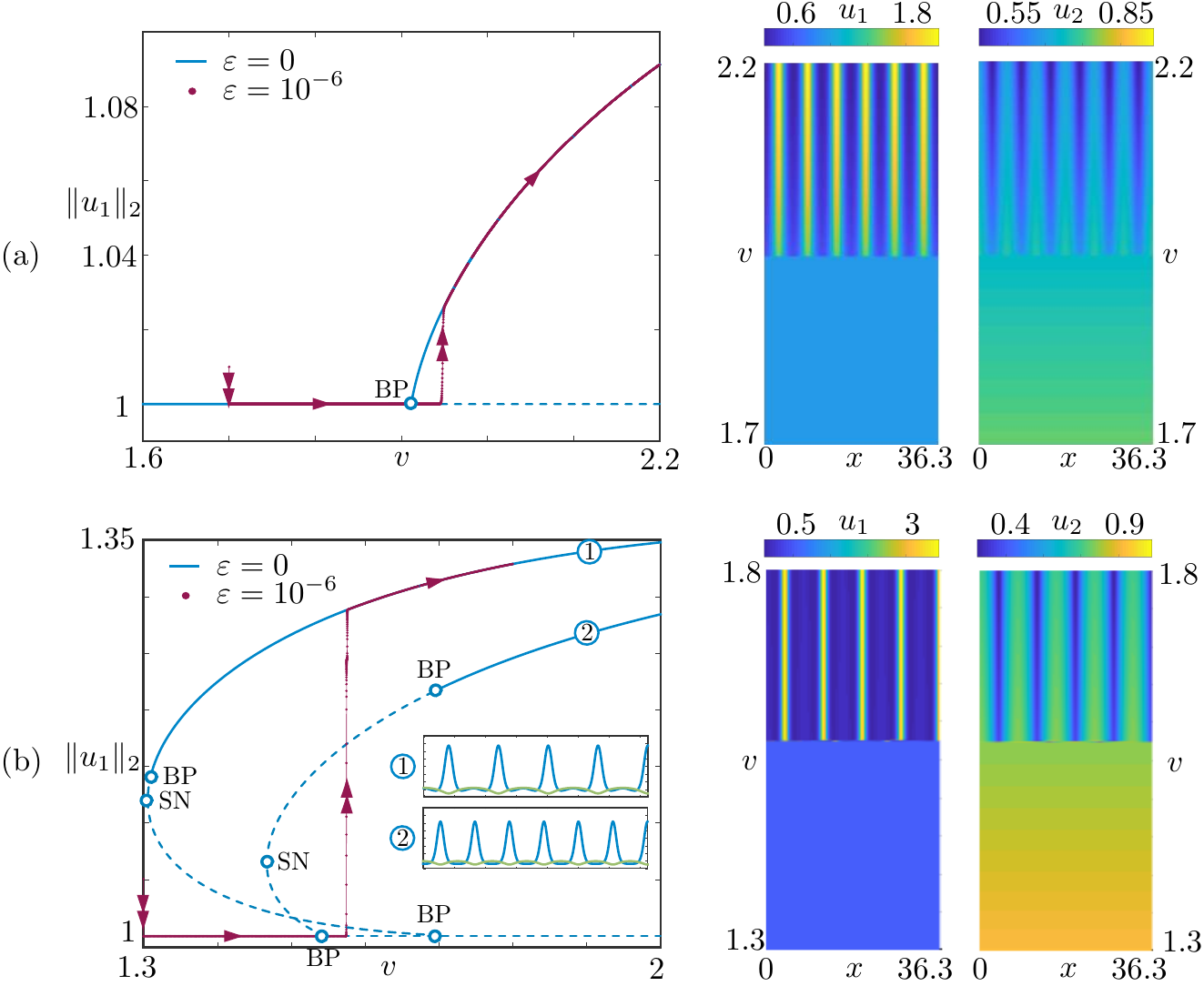}
  \caption{Slow passage through a supercritical (a) and subcritical (b) Turing
  bifurcation of the Schnakenberg model \cref{eq:Schnakenberg}. In the subcritical
setting, the slow passage is through a first Turing bifurcation of the homogeneous
steady state, albeit the solution jumps to a stable branch emanating from a second
Turing bifurcation, which restabilises after a fold and a secondary bifurcation.
Parameters: $c=r=h=b=1$, $d_2=10$, and (a) $d_1=0.26$, (b) $d_1=0.1497$.}
\label{fig:slowPassageSchnakenberg}
\end{figure}
As an example of a local-reaction diffusion equation, we consider the following
Schnakenberg model, which undergoes a sub- or
super-critical Turing bifurcation, as discussed in~\cite{BrenaMedina:2014},
\begin{equation}\label{eq:Schnakenberg}
  \begin{aligned}
    \partial_t u_1 & = d_1 \partial_x^2 u_1 + vu_1^2 u_2 - (c+r)u_1 + hu_2, & x \in (0,l) \\
    \partial_t u_2 & = d_2 \partial_x^2 u_2 - vu_1^2 u_2 + cu_1 - h u_2 + b, & x \in (0,l) \\
            \dot v & = \eps,                                     
  \end{aligned}
\end{equation}
subject to Neumann boundary conditions
\[
  \begin{aligned}
  & \partial_x u_1(0,t) = \partial_x u_1(l,t) = 0, \\
  & \partial_x u_1(0,t) = \partial_x u_1(l,t) = 0.
  \end{aligned}
\]
Numerical results for this model are reported in \cref{fig:slowPassageSchnakenberg}.

\section{General setup for infinite dimensional problems}

This section provides the specific setup for $\infty$-fast, $m$-slow systems
necessary such that centre manifold theory
from~\cite{Haragus2010,Vanderbauwhede:1992ft} can be adopted.

\label{sec:generalSetup}
\subsection{Notation} 
We will use a unique symbol, $\vert \blank \vert$, to denote the 2-norm in $\RSet^n$,
for any integer $n$.
We indicate by $\cX$, $\cY$, $\cZ$ Banach spaces on the field $\RSet$ or $\CSet$,
endowed with norms $\Vert \blank \Vert_{\cX}$, $\Vert \blank \Vert_{\cY}$,
and $\Vert \blank \Vert_{\cZ}$, respectively. Throughout this article, we will assume the
continuous embeddings $\cZ \hookrightarrow \cY \hookrightarrow \cX$. 

For a given Banach space $\cX$, we denote by $C_\eta(\RSet,\cX)$ the
following weighted space:
\begin{equation}
  C_\eta(\RSet,\cX) = \big\{ u \in C^0(\RSet,\cX) \colon \Vert u \Vert_{C_\eta(\RSet,\cX)}= \sup_{t \in \RSet}
  \e^{-\eta |t|} \Vert u(t) \Vert_\cX  < \infty \big\}.
\end{equation}
which is a Banach space equipped with the norm $\Vert \blank \Vert_{C_\eta(\RSet,\cX)}$.

Furthermore, we indicate by $\mathcal{L}(\cZ,\cX)$ the set of bounded linear operators
from $\cZ$ to $\cX$ and by $\cZ^*$, $\cX^*$ the dual spaces of $\cZ$ and
$\cX$, respectively. 
The adjoint of a linear operator $L \in \mathcal{L}(\cZ,\cX)$ is the operator $L^*
\in \mathcal{L}(\cX^*,\cZ^*)$ satisfying
\begin{equation}\label{eq:adjointDef}
  \langle f, Lz \rangle_{\cX} = \langle L^* f, z \rangle_{\cZ}
  \qquad
  \textrm{for all $z \in \cZ$ and for all $f \in \cX^*$,}
\end{equation}
where $\langle \blank, \blank \rangle_{X} \colon \cX^* \times \cX \to \RSet$ is the
canonical duality pairing between $X$ and its dual $X^*$ (a similar definition holds
for $\langle \blank, \blank \rangle_{\cZ}$). 

The resolvent set of a linear operator $L:Z \subset X \to X$ is the set of complex
numbers 
$
  \rho(L) = \{ 
   \lambda \in \CSet \colon \textrm{$(\lambda\id-L)$ is invertible and $(\lambda\id-L)^{-1}\in\mathcal L(X)$}
 \},
 $
and the spectrum of $L$ is the complement of the resolvent set 
$
  \sigma(L)=\mathbb C\setminus\rho(L).
$
A complex number $\lambda$ is called an eigenvalue of $L$ if
$\ker(\lambda\id-L)\neq\{0\}$ and the non null elements of this kernel are called the
eigenvectors of $L$.

The $k$th Fr\'echet derivative of a nonlinear operator $R \in C^k(\cZ,\cY)$ will be
denoted by $D^k R$. For the Taylor expansions of $R \in C^k(\cZ,\cY)$ around $0 \in
\cZ$ we will use the compact notation
\[
  R(u) = \sum_{n=0}^k R_n(u^{(n)}) + o(\Vert u \Vert^k_{\cZ})
\]
where $u^{(n)} = (u,\ldots,u) \in (\cZ)^n$ represents $u \in \cZ$ repeated $n$
times and 
the $n$-linear operator $R_n \in \mathcal{L}^n(\cZ,\cX) = \mathcal{L}(\prod_{j=1}^n \cZ,\cX)\cong
\mathcal{L}(\cZ,\mathcal{L}(\cZ,\ldots,\mathcal{L}(\cZ,\cX)))$, with
$n$ recursions \cite[Lemma 1.6]{Chow:2012tv}, acts as
\[
  R_n(u^{(n)}) = \frac{1}{n!} D^n R(0)u^{(n)}.
\]

\subsection{Systems of $m$-slow, $\infty$-fast differential equations}
For ease of reference, we recall system \eqref{eq:slowFastInf-2}, 
i.e.~we fix $m, p \in \NSet$, and study
differential equations posed on $\cX \times \RSet^m$, featuring $p+1$
control parameters, of the following type:
\begin{equation}\label{eq:slowFastInf}
\begin{aligned}
  \dot{u}& =  Lu + R(u,v,\mu,\eps) := F(u,v,\mu,\eps)\\
  \dot{v}& = \eps G(u,v,\mu,\eps)
\end{aligned}
\end{equation}
where $L$ is a linear operator while $R$, $F$, and $G$ are nonlinear operators. More
precisely we assume $L \colon \cZ \to \cX$, $R \colon \cZ \times
\RSet^m \times \RSet^p \times \RSet \to \cY$, $F \colon \cZ \times \RSet^m \times
\RSet^p \times \RSet \to \cX$, and $G \colon \cZ \times \RSet^m \times \RSet^p \times
\RSet \to \RSet^m$. We have collected the $p+1$ control parameters in the vector
$(\mu,\eps) \in \RSet^p \times \RSet$. This choice reflects the special role played
by the parameter $\eps$, which is taken to be small and determines a separation
of time scales between the \emph{fast} variables $u$ and the \emph{slow} variables
$v$. We are interested in applications where $X$ is infinite dimensional, therefore
we say that \cref{eq:slowFastInf} is a system of $m$-slow, $\infty$-fast differential
equations. We note that, in general, owing to the splitting $F(u) = L u +
R(u)$ one poses $Z \subset Y \subset X$~\cite{Vanderbauwhede:1992ft,Haragus2010} and,
depending on the problem, we may
have $Y=X$ or $Y\neq X$. As we shall mention below, the choice $Y
\neq X$, when possible, simplifies the verification of certain hypotheses.
%

It is sometimes advantageous to write the system \cref{eq:slowFastInf} in a more
compact form. To this end, we introduce the spaces
$\tilde \cX = \cX \times \RSet^m$, $\tilde \cY = \cY \times \RSet^m$ and
$\tilde \cZ = \cZ \times \RSet^m$, endowed with the corresponding natural norms and
embeddings, and rewrite \cref{eq:slowFastInf} as a differential equation on $\tilde
\cX$
\begin{equation}\label{eq:slowFastInfComp}
  \frac{d}{dt}\tilde u  = \tilde L \tilde u + \tilde R (\tilde u,\mu,\eps) := \tilde
  F(\tilde u,\mu,\eps).
\end{equation}
where $\tilde u = (u,v)$ and
\[
  \tilde L =
  \begin{pmatrix}
    L & D_{v}R(0,0,0,0) \\
    0      & 0
  \end{pmatrix},
  \qquad
  \tilde R( \tilde u,\mu,\eps) = 
  \begin{pmatrix}
    R(u,v,\mu,\eps) - D_vR(0,0,0,0)v\\
    \eps G(u,v,\mu,\eps).
  \end{pmatrix}
\]

\section{Centre manifold reduction} \label{sec:centreManifoldReduction}
We aim to perform a centre manifold reduction of
\cref{eq:slowFastInf} near bifurcations points of equilibria of the fast subsystem
\begin{equation}\label{eq:fastSubsystemInf}
  \dot u = F(u,v,\mu,0) = Lu + R(u,v,\mu,0)
\end{equation}
in which $(v,\mu) \in \RSet^m \times \RSet^p$ is fixed. As we shall see below, this
in turn will provide a reduction of \cref{eq:slowFastInf} to a set of $m$-slow,
$n$-fast ODEs, which can subsequently be studied with standard tools for slow-fast
ODEs. Solutions to \cref{eq:fastSubsystemInf} are defined below.
\begin{definition}[Solution]\label{def:solution}
  A solution of the differential equation \eqref{eq:fastSubsystemInf} is a function
  $u:I\to\cZ\hookrightarrow\cX$ defined on an interval $I\subset\RSet$ such that $u\in
  C(I,\cZ)\cap C^1(I,\cX)$, and equality \eqref{eq:fastSubsystemInf} holds in $\cX$
  for all $t\in I$.	
\end{definition}

\subsection{Preliminary hypotheses}
In order to perform the reduction of the fast subsystem, we employ the framework
introduced by Haragus and Iooss
in~\cite{Vanderbauwhede_center:1987,Vanderbauwhede:1992ft,Haragus2010}
We report below a set of hypotheses that are necessary to perform the centre manifold
reduction. These hypotheses are adapted (or reproduced) from
\cite{Haragus2010}.
We begin with an assumption on the vector field \cref{eq:slowFastInf}, and on a
subset of its equilibria.
\begin{hypothesis}[Vector field]
  \label{hyp:vectField}
  The operators $L$, $R$ in \cref{eq:slowFastInf} satisfy the
  following conditions:
  \begin{romannum}
    \item $L \in \mathcal{L}(\cZ,\cX)$.
    \item There exist an integer $k \geq 2$ and neighbourhoods $\cV_u \subset \cX$,
      $\cV_v \subset \RSet^m$, $\cV_\mu \subset \RSet^p$, $\cV_\eps \subset \RSet$ of
      $0$ such that $R \in C^k(\cV_u \times \cV_v \times \cV_p \times
      \cV_\eps,\cY)$ and
      \[
	R(0,0,0,0) = 0 \qquad D_uR(0,0,0,0) = 0
      \]
  \end{romannum}
\end{hypothesis}

\Cref{hyp:vectField} implies that $0 \in \cX \times \RSet^m$ is an equilibrium of
\cref{eq:slowFastInfComp} for $(\mu ,\eps) = 0$, that is, $0 \in \cX$ is an
equilibrium of the fast subsystem~\cref{eq:fastSubsystemInf} for
$(v,\mu) = 0$. Note that \Cref{hyp:vectField}.(ii) implies $F(0,0,0,0) = 0$, while in
general $G(0,0,0,0) \neq 0$.

\begin{hypothesis}[Spectral decomposition] \label{hyp:spectralDecomposition}
  The spectrum $\sigma(L)$ of $L$ can be written as follows
  \[
    \spec(L)= \sigma_u \cup \sigma_c \cup \sigma_s,
  \]
  where
  \[
    \sigma_u := \{ \lambda \in \spec(L) \colon \real \lambda  > 0 \}, \quad
    \sigma_c := \{ \lambda \in \spec(L) \colon \real \lambda  = 0 \}, \quad
    \sigma_s := \{ \lambda \in \spec(L) \colon \real \lambda  < 0 \},
  \]
  satisfy the following assumptions
  \begin{romannum}
  \item The set $\sigma_c$ consists of a finite number $n>0$ of
    eigenvalues with finite algebraic multiplicities.
    \item There exists a positive constant $\gamma >0 $ such that
      \[
	\inf_{\lambda \in \sigma_u} \real \lambda > \gamma, \qquad
	\sup_{\lambda \in \sigma_s} \real \lambda < -\gamma
      \]
  \end{romannum}
\end{hypothesis}
If \cref{hyp:vectField,hyp:spectralDecomposition} hold, we can introduce the spectral
projector on the centre subspace, $P_c \in \mathcal{L}(\cX,\cZ)$
associated with $\sigma_c$ and its complementary projector $P_{su} = \id_\cX - P_c
\in \mathcal{L}(\cX) \cap \mathcal{L}(\cY) \cap \mathcal{L}(\cZ)$. We therefore
have the decomposition
\[
  \cX = \cE_c \oplus X_{su}, \quad 
  \cE_c = \range P_c = \ker P_{su} \subset \cZ, \quad
  \cX_{su} = \range P_{su} = \ker P_c \subset \cX,
\]
we set $Z_{su} = P_{su} \cZ \subset \cZ$ and $Y_{su} = P_{su} \cY \subset \cY$, and
we let $L_c \in \mathcal{L}(E_c)$, $L_{su} \in \mathcal{L}(Z_{su},Y_{su})$, be the
restrictions of $L$ to $E_c$ and $Z_{su}$, respectively. We are now ready to state
one final hypothesis necessary for the centre manifold reduction:

\begin{hypothesis}[Linear equation in the hyperbolic subspace]
    \label{hyp:linearEquation}
    For any $\eta \in [0,\gamma]$, where $\gamma$ is
    the spectral gap defined in \cref{hyp:spectralDecomposition}, and any $f \in
  C_\eta( \RSet, \cY_{su})$, the linear problem
  \begin{equation}\label{eq:linProbFastSubs}
    \dot{u}_{su} = L_{su} u_{su} + f(t),
  \end{equation}
  has a unique solution $u_{su} = K_{su} f \in C_\eta(\RSet,\cZ_{su})$. The linear map
  $K_{su}$ belongs to $\mathcal{L}\big( C_\eta( \RSet,\cY_{su}), C_\eta( \RSet,
  \cZ_{su}) \big)$, and there exists $\Pi \in C([0,\gamma], \RSet)$
  such that
  \begin{equation}\label{eq:PhiBoundFast}
    \Vert K_{su} \Vert_{
      \mathcal{L}( C_\eta( \RSet, \cY_{su}), C_\eta( \RSet, \cZ_{su}))}
      \leq \Pi(\eta).
  \end{equation}
\end{hypothesis}

Checking this hypothesis in applications may be nontrivial, as it requires to prove
that there is a gain in regularity of solution $u(t)\in\cZ$ to
\cref{eq:linProbFastSubs}, when the forcing term $f(t)$ belongs to $\cY$. Note that
this forcing term mimics the effect of the nonlinearity and, as such, shares it
range. In some cases, this hypothesis can be substituted by an estimate on the
resolvent set of $L$, and in the examples treated below we will give pointers to the
literature for these cases. In particular, when $Y \neq X$, a necessary condition to
check the validity of \cref{hyp:linearEquation} is available~\cite[Section
2.2.3]{Haragus2010}. In this paper we focus on the more challenging case $Y=X$, and
present examples where a necessary condition is available, and examples where it is
not (hence \cref{hyp:linearEquation} must be checked directly).

\subsection{Centre-manifold results}
We now proceed to derive a centre manifold existence result for \cref{eq:slowFastInf}
in a neighbourhood of $0 \in \cX \times \RSet^m  \times \RSet^p
\times \RSet$.

\begin{theorem}[Parameter-dependent centre manifold] 
  \label{thm:centreManifoldWithParameters}
  Assume
  \crefrange{hyp:vectField}{hyp:linearEquation}. Further, assume $G \in C^k(V_u
    \times V_v \times V_p \times
  V_\eps, \RSet^m)$. Then the $m$-slow, $\infty$-fast system \cref{eq:slowFastInf}
  admits a finite-dimensional centre manifold. More
  precisely, there exists a map 
  $\Psi \in C^k(E_c \times \RSet^m \times \RSet^q \times \RSet,Z_{su})$ with
  \[
    \Psi(0,0,0,0) = 0, \quad D_u \Psi(0,0,0,0) = 0, 
  \]
  and a neighbourhood $O_u \times O_v \times O_\mu \times O_\eps$ of $0 \in \cZ
  \times \RSet^m \times \RSet^q \times \RSet$ such that, for all $(\mu,\eps) \in
  O_\mu \times O_\eps$ the manifold
  \begin{equation}\label{eq:CMDefinition}
  \cM_{c}(\mu,\eps) = \big\{ (u_c,v) + \big( \Psi(u_c,v,\mu,\eps), 0 \big) \colon 
      u_c \in E_c,\; v \in O_v \big\}
  \end{equation}
  has the following properties:
  \begin{romannum}
  \item $\cM_{c}(\mu,\eps)$ is locally invariant, that is, if $(u,v)$ is a solution of
    \cref{eq:slowFastInf} such that $(u(0),v(0)) \in \cM_{c}(\mu,\eps) \cap (O_u \times
    O_v)$ for all $t \in [0,T]$, then $(u(t),v(t)) \in \cM_{c}(\mu,\eps)$ for all $t \in
    [0,T]$.
  \item $\cM_{c}(\mu,\eps)$ contains the set of bounded solutions of \cref{eq:slowFastInf}
    staying in $O_u\times O_v$ for all $t\in\mathbb R$, that is, if $(u,v)$ is a
    solution of \cref{eq:slowFastInf} satisfying $(u(t),v(t)) \in O_u \times O_v$ for
    all $t \in \RSet$, then $(u(0),v(0)) \in \cM_{c}(\mu,\eps)$.
  \end{romannum}
\end{theorem}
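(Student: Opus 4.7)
The plan is to reduce the statement to a direct application of the parameter-dependent centre-manifold theorem for autonomous semilinear equations in Banach spaces (e.g.\ Theorem~3.3 in Haragus--Iooss), by rewriting \cref{eq:slowFastInf} in the compact form \cref{eq:slowFastInfComp} and checking the three structural hypotheses for the augmented operator $\tilde L$ acting on $\tilde \cX = \cX \times \RSet^m$. The key observation is that introducing the slow variable $v$ as an additional state variable enlarges the centre spectrum of the linearisation by the eigenvalue $0$ with multiplicity $m$, while leaving the unstable and stable spectrum unchanged; similarly, the parameters $(\mu,\eps)$ can be absorbed as state variables with trivial dynamics, so the parameter dependence in the final statement follows from the parameter-free reduction on the enlarged space.

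More precisely, I would first verify \cref{hyp:vectField} for $\tilde L$ and $\tilde R$: boundedness of $\tilde L \in \mathcal{L}(\tilde \cZ,\tilde \cX)$ follows from \cref{hyp:vectField}.(i) and continuity of $D_v R(0,0,0,0)$, while the regularity of $\tilde R$ and the cancellation $\tilde R(0,0,0) = 0$, $D_{\tilde u}\tilde R(0,0,0) = 0$ come from \cref{hyp:vectField}.(ii) and the assumption $G \in C^k$, after noting that the linear term $D_v R(0,0,0,0) v$ was purposely extracted into $\tilde L$. Next, for \cref{hyp:spectralDecomposition}, the block-triangular structure of $\tilde L$ yields $\spec(\tilde L) = \spec(L) \cup \{0\}$; the centre spectrum $\tilde \sigma_c = \sigma_c \cup \{0\}$ consists of $n+m$ eigenvalues with finite algebraic multiplicity (the $v$-block contributes a finite Jordan structure at $0$), and the spectral gap $\gamma$ of $L$ is inherited by $\tilde L$ since the spectra away from the imaginary axis coincide. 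The centre projector $\tilde P_c$ decomposes into the block matrix combining $P_c$ with the identity on $\RSet^m$ plus a bounded coupling term coming from $D_v R(0,0,0,0)$, which is well defined because the spectra of $L_c$ and the $0$-block are disjoint only up to overlap at $0$, where the generalised eigenspace is still finite-dimensional.

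The main obstacle will be \cref{hyp:linearEquation} for the hyperbolic part $\tilde L_{su}$: one must show that for $\tilde f = (f,g) \in C_\eta(\RSet, \tilde \cY_{su})$, the equation $\dot{\tilde u}_{su} = \tilde L_{su} \tilde u_{su} + \tilde f$ has a unique solution in $C_\eta(\RSet,\tilde \cZ_{su})$ with a uniform bound. Since the projector $P_{su}$ annihilates the $\RSet^m$ factor, $\tilde \cZ_{su}$ is isomorphic to $\cZ_{su}$ (via $\tilde P_{su}$), and $\tilde L_{su}$ reduces to $L_{su}$, so existence, uniqueness and the continuity of $\tilde K_{su}$ follow directly from \cref{hyp:linearEquation} applied to $L_{su}$; the bounded coupling $D_v R(0,0,0,0)$ appears only in the finite-dimensional centre block and contributes nothing to the hyperbolic estimate.

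With the three hypotheses verified for $\tilde L$ and $\tilde R$, I would apply the parameter-dependent centre-manifold theorem of Vanderbauwhede--Iooss/Haragus--Iooss to $\eqref{eq:slowFastInfComp}$ with parameters $(\mu,\eps)$, which produces a $C^k$ reduction function $\tilde \Psi: (E_c \oplus \RSet^m) \times \RSet^p \times \RSet \to \tilde \cZ_{su}$ with $\tilde \Psi(0,0,0,0) = 0$, $D_{\tilde u_c}\tilde \Psi(0,0,0,0) = 0$. Because $\tilde \cZ_{su} \cong \cZ_{su} \times \{0\}$, the function $\tilde \Psi$ has a vanishing slow component, so writing $\Psi(u_c,v,\mu,\eps) \in \cZ_{su}$ for its $\cX$-component yields the graph representation \cref{eq:CMDefinition}. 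Properties (i) and (ii) are then direct translations of the local invariance and trapping properties of the centre manifold in the $(\tilde u,\mu,\eps)$ formulation, restricted to the original $(u,v)$-coordinates.
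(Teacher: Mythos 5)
Your proposal follows the same route as the paper's own proof: pass to the compact formulation on $\tilde\cX=\cX\times\RSet^m$, verify \crefrange{hyp:vectField}{hyp:linearEquation} for the augmented operators $\tilde L,\tilde R$, and invoke the parameter-dependent centre-manifold theorem of Haragus--Iooss. The verification of \cref{hyp:vectField} and \cref{hyp:linearEquation} in your sketch essentially matches the paper's.

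There is, however, one point where you and the paper diverge, and it is worth flagging because you are the one who is right. The paper defines the augmented projectors $\tilde P_c=\operatorname{diag}(P_c,\id_{\RSet^m})$ and $\tilde P_{su}=\operatorname{diag}(P_{su},0_{\RSet^m})$ and asserts these are \emph{the} spectral projectors of $\tilde L$. But since $\tilde L$ is upper block-triangular with off-diagonal entry $C:=D_vR(0,0,0,0)$, the block-diagonal $\tilde P_c$ does \emph{not} commute with $\tilde L$ unless $P_{su}C=0$, i.e.\ unless $C$ maps $\RSet^m$ into $E_c$ (which is false in several of the paper's own examples, e.g.\ the neural field). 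The true Riesz spectral projector for $\tilde\sigma_c$ carries an upper-triangular coupling block $\Theta=-L_{su}^{-1}P_{su}C$ (well-defined precisely because $L_{su}$ is invertible on $\cZ_{su}$ by the spectral gap), so that $\range\tilde P_c=\{(\xi+\Theta v,v):\xi\in E_c,\,v\in\RSet^m\}$ is a \emph{tilted} copy of $E_c\times\RSet^m$, while $\range\tilde P_{su}=\cX_{su}\times\{0\}$ and $\tilde L_{su}=L_{su}$ are unchanged. You correctly identify this coupling term, though your justification (``disjoint up to overlap at $0$'') is loose; the clean reason is invertibility of $L_{su}$. Two consequences: (a) the verification of \cref{hyp:linearEquation} for $\tilde L_{su}$ is unaffected, since the hyperbolic subspace and restricted operator are the same in both treatments — so the heart of the paper's Step 3 remains valid; (b) after reparametrising the tilted centre subspace by $(u_c,v)\in E_c\times\RSet^m$, the reduction function $\Psi$ acquires a linear term $\Theta v$, which is exactly why the theorem statement asserts only $D_u\Psi(0,0,0,0)=0$ and makes no claim about $D_v\Psi$. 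So the theorem as stated is correct, the paper's proof has a technically inaccurate step that your proposal repairs, and the net argument is the same in both.
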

\begin{proof} 
  We use the compact formulation of the $\infty$-fast, $m$-slow system
  \cref{eq:slowFastInf} as a differential equation on $\tilde \cX = \cX \times \RSet^m$,
  \begin{equation}\label{eq:uTildeODE}
    \frac{d}{dt} \tilde u = \tilde L \tilde u + \tilde R (\tilde u,\mu,\eps)
  \end{equation}
  where $\tilde u = (u,v)$ and
  \[
    \tilde L =
    \begin{pmatrix}
      L & D_{v}R(0,0,0,0) \\
      0      & 0
    \end{pmatrix},
    \qquad
    \tilde R( \tilde u,\mu,\eps) = 
    \begin{pmatrix}
      R(u,v,\mu,\eps) - D_vR(0,0,0,0)v\\
      \eps G(u,v,\mu,\eps)
    \end{pmatrix}
    \,.
  \]

  We aim to show that, if \crefrange{hyp:vectField}{hyp:linearEquation} hold, then
  the same hypotheses hold for $\tilde L, \tilde R$, on Banach spaces $\tilde X$,
  $\tilde Y$, $\tilde Z$, upon defining suitable projectors $\tilde P_c$, $\tilde
  P_{su}$. The assertion is then a consequence of Theorem 3.3 of \cite{Haragus2010},
  appllied
  to \cref{eq:uTildeODE}. 
  We proceed to verify the hypotheses in $3$ steps:

  \textit{Step 1. Verification of \cref{hyp:vectField} for system
      \cref{eq:uTildeODE}}. It is immediate to prove that \cref{hyp:vectField} and
      $G \in C^k(V_u \times V_v \times V_p \times V_\eps, \RSet^m)$ imply that
      \cref{hyp:vectField} holds also for $\tilde L, \tilde R$ in system
      \cref{eq:uTildeODE}, and we omit this proof. 

    \textit{Step 2. Verification of \cref{hyp:spectralDecomposition} for system
      \cref{eq:uTildeODE}}. 
      The expression of $\tilde L$ makes it easy to compute its resolvent as function
      of the one of $L$. We conclude that $\sigma_c(\tilde L) = \sigma_c(L) \cup
      \{0\}$.
      One consequence is that the spectrum of $\tilde L$ admits the decomposition
      \[
	\sigma(\tilde L) = \sigma_u \cup \tilde \sigma_c \cup \sigma_s,
      \]
      where $\sigma_s$, $\sigma_u$ are the stable and unstable spectra of $L$,
      respectively, and where
      $\tilde \sigma_c$ contains $n+m$ purely imaginary eigenvalues, where $n$ is as
      in \cref{hyp:spectralDecomposition}(i). 
      This guarantees that $\tilde \sigma_c$ contains finitely many eigenvalues.
      Next, we define the projectors associated with $\tilde \sigma_c$ as follows
      \begin{equation}\label{eq:projectors}
	\tilde P_c = 
	\begin{pmatrix}
	  P_c & 0              \\
	    0 & \id_{\RSet^m}
	\end{pmatrix},
	\qquad
	\tilde P_{su} = 
	\begin{pmatrix}
	  P_{su} & 0 \\
		0 &  0_{\RSet^m}
	\end{pmatrix},
      \end{equation}
      where $P_{c}, P_{su}$ are the spectral projectors associated with the
      centre eigenspace of $L$. We have $\tilde \cX = \tilde \cE_c \oplus \tilde \cX_{su}$, where
      \[
	\tilde \cE_c= \range \tilde P_c, \quad
	\tilde \cX_{su} = \ker \tilde P_c, \quad
	\dim \tilde \cE_c = \dim \cE_c + m< \infty.
      \]
      Since $\tilde \cE_0$ is finite dimensional, we conclude that the $\tilde n$
      eigenvalues in $\tilde \sigma_c$ have finite multiplicities, hence
      \cref{hyp:spectralDecomposition}(i) holds for $\tilde L$.
      Furthermore, since $\tilde L$ and $L$ have the same stable and unstable spectra,
      their spectral gaps coincide, that is, \cref{hyp:spectralDecomposition}(ii) for $L$
      implies that the same hypothesis holds for $\tilde L$, with the same $\gamma$.

  \textit{Step 3. Verification of \cref{hyp:linearEquation} for system
    \cref{eq:uTildeODE}}. We Set
  \[
    \tilde \cY_{su} = P_{su} \cY \quad = \cY_{su} \times \{0_{\RSet^m}\}, \qquad
    \tilde \cZ_{su} = P_{su} \cZ = \cZ_{su} \times \{0_{\RSet^m}\},
  \]
  and we denote by $\tilde L_{su}$ the restriction of $\tilde L$ to $\tilde Z_{su}$.
  To prove the assertion we must show that \cref{hyp:linearEquation} implies
  that the same hypothesis holds for the following differential equation on $\tilde
  \cX_{su}$
  \begin{equation}\label{eq:linProb}
    \frac{d}{dt} \tilde u_{su} = \tilde L_{su} \tilde u_{su} + \tilde f(t),
  \end{equation}
  where $\tilde f = (f,0_{\RSet^m}) \in C_\eta(\RSet,\tilde \cY_{su})$, with $f$ and
  $\eta$ given in \cref{hyp:linearEquation}.
%
  In the remainder of the proof, we use the symbol $0$ in place of $0_{\RSet^m}$ to
  simplify the notation. We consider the linear mapping
  \[
    \tilde K_{su} \colon \cZ_{su} \times \RSet^m \to \cZ_{su} \times \RSet^m, 
    \qquad
    (U,V) \mapsto (K_{su} U,0).
  \]
  It suffices to prove the following claims:
  \begin{enumerate}
    \item $\tilde K_{su} \tilde f = (K_{su} f,0)$ is an element of
      $C_\eta(\RSet,\tilde \cZ_{su})$.
    \item $\tilde K_{su} \tilde f$ is the unique solution to \cref{eq:linProb}.
    \item Let $\Pi(\eta)$ be fixed as in \cref{hyp:linearEquation}, then the following
      bound holds
    \[
      \Vert \tilde K_h \Vert_{ \mathcal{L}( C_\eta(\RSet,\tilde \cY_{su}) ,
					       C_\eta(\RSet,\tilde \cZ_{su}) )}
      \leq \Pi(\eta).
    \]
  \end{enumerate}
  To prove claim $1$, we recall that $\tilde \cZ_{su} = \cZ_{su} \times \{0\}$,
  therefore if $U \in \cZ_{su}$ then $(U,0) \in \cZ_{su}$ and  
  $\Vert U \Vert_{\cZ_{su}} = \Vert (U,0) \Vert_{\tilde \cZ_{su}}$. By
  \cref{hyp:linearEquation}, $(K_{su} f)(t) \in \cZ_{su}$ for all $ t
  \in \RSet$, and the mapping $t \mapsto (K_{su} f)(t)$ is continuous. This
  implies $\tilde K_{su} \tilde f = (K_{su} f,0) \in C^1(\RSet,\tilde \cZ)
  \subset C^0(\RSet,\tilde \cZ)$ and
  \[
    \begin{aligned}
      \Vert \tilde K_{su} \tilde f \Vert_{C_\eta(\RSet,\tilde \cZ_{su})} 
      & = \sup_{t \in \RSet} \e^{-\eta |t|} 
		    \Vert (\tilde K_{su} \tilde f)(t) \Vert_{\tilde \cZ_{su}} \\
    & = \sup_{t \in \RSet} \e^{-\eta |t|} \Vert (K_{su}f)(t) \Vert_{\cZ_{su}} \\
    & = \Vert K_{su}f \Vert_{C_\eta(\RSet,\cZ_{su})},
    \end{aligned}
  \]
  which is finite by \cref{hyp:linearEquation}. Therefore $\tilde K_{su} \tilde f \in
  C_\eta(\RSet,\tilde \cZ_{su})$.

  In order to prove claim $2$ we note that
  \begin{align*}
    \frac{d}{dt} \tilde K_{su} \tilde f 
    & = 
    \frac{d}{dt} 
    \begin{pmatrix}
      K_{su} f\\
      0
    \end{pmatrix} \\
    & =
    \begin{pmatrix}
      L_{su} K_{su} f + f(t)\\
      0
    \end{pmatrix}
    && \text{by \cref{hyp:linearEquation},}
    \\
    & = \tilde L_{su} \tilde K_{su} \tilde f + \tilde f(t),
  \end{align*}
  therefore $\tilde K_{su} \tilde f$ solves \cref{eq:linProb}. Uniqueness is
  proved by showing that any solution $\tilde U = (U,V) \in C_\eta(R,\tilde
  \cZ_{su})$ to \cref{eq:linProb} is equal to $\tilde K_{su} \tilde f$.
  Since $\tilde \cZ_{su} = \cZ_{su} \times \{0\}$, we have
  $V(t) \equiv 0$ for all $t \in \RSet$, hence $\tilde U =(U,0)$.
  Since $\tilde U$ solves \cref{eq:linProb}, we have 
  \[
    \frac{d}{dt} U = L_{su} U + f(t), 
  \]
  therefore $U$ solves \cref{eq:linProbFastSubs}, and
  \cref{hyp:linearEquation} guarantees $U = K_{su} f$. We conclude
  $\tilde U = (K_{su} f,0) = \tilde K_{su} \tilde f$, hence $\tilde K_{su}
  \tilde f$ is the unique solution to \cref{eq:linProb}.

  To prove claim $3$, we estimate
  \[
  \begin{aligned}
    \Vert \tilde K_{su} & \Vert_{\mathcal{L}( C_\eta(\RSet,\tilde \cY_{su}) , 
                                               C_\eta(\RSet,\tilde \cZ_{su}) )} \\
        &= \sup 
	\big\{ \Vert \tilde K_{su} \tilde U \Vert_{C_\eta(\RSet,\tilde \cZ_{su})} 
	      \colon \Vert \tilde U \Vert_{C_\eta(\RSet,\tilde \cY_{su})} = 1 \big\}
	       \\
        &= \sup 
	\big\{ \Vert \tilde K_{su} (U,V) \Vert_{C_\eta(\RSet,\tilde \cZ_{su})} 
	\colon \Vert (U,V) \Vert_{C_\eta(\RSet,\tilde \cY_{su})} = 1 \big\} \\
        &= \sup 
	\big\{ \Vert K_{su} U \Vert_{C_\eta(\RSet,\tilde \cZ_{su})} 
	\colon \Vert (U,V) \Vert_{C_\eta(\RSet,\tilde \cY_{su})} = 1 \big\} \\
        &= \sup 
	\big\{ \Vert K_{su} U \Vert_{C_\eta(\RSet,\cZ_{su})} 
	\colon \Vert (U,V) \Vert_{C_\eta(\RSet,\tilde \cY_{su})} = 1 \big\} \\
        &\leq 
	\Vert K_{su} \Vert_{\mathcal{L}( C_\eta(\RSet,\cY_{su}) , C_\eta(\RSet,\cZ_{su}) )}
	\sup 
	\big\{ \Vert U \Vert_{C_\eta(\RSet,\cY_{su})} 
	\colon \Vert (U,V) \Vert_{C_\eta(\RSet,\tilde \cY_{su})} = 1 \big\} \\ 
	&\leq \Vert K_{su} \Vert_{\mathcal{L}( C_\eta(\RSet,\cY_{su}) ,
	  C_\eta(\RSet,\cZ_{su}) )} \\
	&\leq \Pi(\eta),
  \end{aligned}
  \]
  which completes this step.

  Following the $3$ steps above, the assertion is a consequence of the
  parameter-dependent version of the centre-manifold theorem \cite[Theorem
  3.3]{Haragus2010} for system \cref{eq:uTildeODE}. This theorem guarantees the
  existence of neighbourhoods $O_{\tilde u} \times O_\mu \times O_\eps$ of $0 \in
  \tilde \cZ \times \RSet^q \times \RSet$, a reduction function $\tilde \Psi \in
  C^k(\tilde E_c \times \RSet^q \times \RSet,\tilde Z_h)$ and a centre manifold 
  \[
  \cM_{c}(\mu,\eps) = \big\{ \tilde u_c + \tilde \Psi(\tilde u_c,\mu,\eps)
  \colon \tilde u_c \in \tilde E_c \big\},
  \]
  that satisfy the assertion. The expression above, however, is seemingly different
  from \cref{eq:CMDefinition}. To recover \cref{eq:CMDefinition} and the assertion in
  terms of a reduction function $\Psi \in C^k(E_c \times \RSet^m, \RSet^q \times
  \RSet, \cZ_{su})$, we note that $\tilde \Psi$ is with values in $\tilde \cZ_{su}=
  \cZ_{su} \times \{0\}$ hence $\tilde \Psi = (\Psi,0)$, for some $\Psi \in
  C^k(E_c \times \RSet^m \times \RSet^q \times \RSet, Z_{su})$, and this completes
  the proof.
\end{proof}

After the existence of a centre manifold has been derived, we proceed to write the
reduced equation on the centre manifold, as the following corollary states.
\begin{corollary}\label{cor:reducedEquation}
  Assume the hypotheses of \cref{thm:centreManifoldWithParameters}. Let $T >0$, let
  $(\mu,\eps) \in O_\mu \times O_\eps$, and let $(u,v)$
  be a solution to \cref{eq:slowFastInf} which belongs to $\cM_{c}(\mu,\eps)$ for $t \in
  [0,T]$. Then $u = u_c+\Psi(u_c,v,\mu,\eps)$, with $(u_c,v)$ satisfying
	\begin{equation}\label{eq:reducedEquation}
	\begin{aligned}
	  \dot{u}_c &=& P_cF(u_c+\Psi(u_c,v,\mu,\eps),v,\mu,\eps),\\
	  \dot v    &=& \eps G(u_c+\Psi(u_c,v,\mu,\eps),v,\mu,\eps).\\
	\end{aligned}
	\end{equation}
\end{corollary}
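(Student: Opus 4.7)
The plan is to combine the parametrisation of the centre manifold provided by Theorem~\ref{thm:centreManifoldWithParameters} with the projection $P_c$ onto the centre subspace, and observe that the slow equation requires only substitution.

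First, I would use the invariance statement of Theorem~\ref{thm:centreManifoldWithParameters}(i) together with the explicit parametrisation \eqref{eq:CMDefinition}. Since $(u(t),v(t)) \in \cM_c(\mu,\eps)$ for all $t \in [0,T]$, there exists, for each such $t$, a unique $u_c(t) \in E_c$ with
\[
  u(t) = u_c(t) + \Psi(u_c(t),v(t),\mu,\eps).
\]
Uniqueness follows from the direct sum decomposition $\cX = \cE_c \oplus \cX_{su}$ and the fact that $\Psi$ takes values in $\cZ_{su} \subset \cX_{su} = \ker P_c$. Indeed, applying $P_c$ to both sides yields $u_c(t) = P_c u(t)$, so $u_c$ inherits the regularity of $u$, namely $u_c \in C^1([0,T], E_c)$.

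Next, I would apply $P_c$ and $P_{su}$ respectively to the fast equation in \eqref{eq:slowFastInf}. Since $P_c \in \mathcal{L}(\cX,\cZ)$ is bounded and time-independent, differentiation commutes with $P_c$, giving
\[
  \dot u_c(t) = \frac{d}{dt} P_c u(t) = P_c \dot u(t) = P_c F(u(t),v(t),\mu,\eps),
\]
and substituting $u(t) = u_c(t) + \Psi(u_c(t),v(t),\mu,\eps)$ produces exactly the first equation of \eqref{eq:reducedEquation}. The slow equation in \eqref{eq:reducedEquation} is obtained by a trivial substitution in the second equation of \eqref{eq:slowFastInf}, since the slow component of $\cM_c(\mu,\eps)$ is unconstrained and the projector acts trivially on $\RSet^m$ (recall the block structure of $\tilde P_c$ in \eqref{eq:projectors}).

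The only point that requires a brief argument is that the curve $u_c(\cdot)$ built pointwise is actually $C^1$ into the finite-dimensional space $E_c$; this follows immediately from $u_c = P_c u$ and $u \in C^1([0,T], \cX)$ together with boundedness of $P_c$. Beyond this, no obstacle arises: the statement is essentially a rewriting of the original system in the coordinates $(u_c,v)$ on $\cM_c(\mu,\eps)$, and no compatibility condition on $\Psi$ is invoked at this stage (the invariance of $\cM_c$, which would express such a condition through a quasilinear PDE for $\Psi$, is not needed here because we use invariance only to assert that solutions remain in the graph).
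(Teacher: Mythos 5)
Your argument is correct and is essentially the same as the paper's proof: you project the fast equation with $P_c$, use boundedness and time-independence of $P_c$ to commute it with $d/dt$, and substitute the graph parametrisation, while the paper packages the identical computation in the compact $\tilde u = (u,v)$ notation with the block projector $\tilde P_c$. Your explicit remarks on the uniqueness of $u_c = P_c u$ (via $\Psi(\cdot)\in\cZ_{su}=\ker P_c$) and the inherited $C^1$ regularity are details the paper leaves implicit, but they do not change the route.
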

\begin{proof}
  Let $\tilde u = (u,v)$ be a solution to 
  \cref{eq:slowFastInfComp}, the compact formulation of the $m$ slow $\infty$-fast
  problem, and assume $\tilde u$ stays in $\cM_c$ for $t \in [0,T]$.
  Then
  \[
    \tilde u = 
    \begin{pmatrix}
      u \\
      v
    \end{pmatrix}
    =
    \begin{pmatrix}
      u_c + \Psi(u_c, v, \mu, \eps) \\
      v
    \end{pmatrix},
    \qquad
    u_c = P_c u.
  \]
  From \cref{eq:slowFastInfComp} and the definition of $\tilde P_c$ and $\tilde F$ we
  obtain
  \[
    \begin{aligned}
    \frac{d}{dt} 
    \begin{pmatrix}
      u_c \\
      v
    \end{pmatrix}
    & =
    \tilde P_c
    \frac{d}{dt} 
    \big( \tilde u_c + \tilde \Psi (\tilde u_c,\mu,\eps) \big) \\
    & = 
    \tilde P_c
    \tilde F\big( \tilde u_c + \tilde \Psi (\tilde u_c,\mu,\eps), \mu, \eps \big) \\
    & = 
    \tilde P_c
    \tilde F\big( ( u_c + \Psi (u_c,v,\mu,\eps), v), \mu, \eps \big) \\
    & = 
    \begin{pmatrix}
      P_cF(u_c+\Psi(u_c,v,\mu,\eps),v,\mu,\eps)\\
      \eps G(u_c+\Psi(u_c,v,\mu,\eps),v,\mu,\eps)
    \end{pmatrix}.
  \end{aligned}
  \]
\end{proof}

Importantly, the evolution equation \cref{eq:reducedEquation} is posed on a
finite-dimensional state space, whose dimension is $\dim E_c + m$. Owing to the
particular form of the right-hand side of \cref{eq:slowFastInfComp}, this
finite-dimensional dynamical system has $m$ slow and $\dim E_c$ fast variables. 

\begin{remark}
The centre manifold reduction theory outlined here applies directly to the finite
dimensional case, i.e.~all results have a finite dimensional version for a slow-fast
system \eqref{eq:slowFastFin}. We will highlight these results in later sections
where  we discuss folds and delayed Hopf (and Turing) bifurcations in detail.
\end{remark}

\subsection{Centre Manifold reduction with symmetries}\label{sec:reductionSymmetries}
When the fast subsystem admits symmetries, it is expected that they influence the
centre-manifold reduction. The following result combines a parameter-dependent
centre-manifold reduction for the $\infty$-fast, $m$-slow dynamical system (which has
been developed above) with equivariant centre-manifold theory for infinite
dimensional dynamical systems \cite{golubitsky1988singularities, chossat2000methods}.
We give this result without proof, as it is a minor modification of
\cref{thm:centreManifoldWithParameters}.

\begin{theorem}
  Assume there is a linear operator $T\in \mathcal{L}(\cX) \cap
  \mathcal L(\cZ)$ which commutes with the fast vector field
  \[
    T L=LT,\quad T R(u,v,\mu,\eps)=R(T u,v,\mu,\eps),
  \]
  and such that the restriction $T_c$ of $T$ to the
  subspace $E_c$ is an isometry. Under the assumptions in
  Theorem~\ref{thm:centreManifoldWithParameters}, there exists a reduction
  function $\Psi$ which commutes with $T$, that is, $T\Psi(u_c,v,\mu,\eps)
  =\Psi(T_c u_c,v,\mu,\eps)$ for all $u_c\in X_c$, and such that the vector field
  in the reduced equation \eqref{eq:reducedEquation} commutes with $(T_c,\id)$.
\end{theorem}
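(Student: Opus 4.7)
The plan is to mimic the proof of \cref{thm:centreManifoldWithParameters}, recasting the $m$-slow $\infty$-fast system as the augmented equation \cref{eq:slowFastInfComp} on $\tilde\cX = \cX\times\RSet^m$, and then invoke the \emph{equivariant} version of the parameter-dependent centre-manifold theorem (cf.\ Haragus--Iooss, Theorem 3.15, and the equivariant theory in \cite{golubitsky1988singularities,chossat2000methods}) applied to $\tilde F$. The substantive content is therefore to construct an extension $\tilde T$ of $T$ acting on $\tilde\cX$, to verify that this $\tilde T$ commutes with the augmented vector field, and to check that its restriction to the augmented centre subspace is an isometry.

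Concretely, I would define the block-diagonal operator
\[
  \tilde T =
  \begin{pmatrix}
    T & 0 \\
    0 & \id_{\RSet^m}
  \end{pmatrix},
\]
which lies in $\mathcal L(\tilde\cX)\cap\mathcal L(\tilde\cZ)$ since $T\in\mathcal L(\cX)\cap\mathcal L(\cZ)$. Using the explicit form of $\tilde L$ given in the proof of \cref{thm:centreManifoldWithParameters}, the identity $\tilde T\tilde L=\tilde L\tilde T$ reduces to two conditions: $TL=LT$, which is assumed, and $T\,D_vR(0,0,0,0)=D_vR(0,0,0,0)$. The latter is obtained for free by differentiating the equivariance hypothesis $T R(u,v,\mu,\eps)=R(Tu,v,\mu,\eps)$ with respect to $v$ at the origin. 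For $\tilde T\tilde R=\tilde R\tilde T$, one block is the equivariance of $R$ (minus the already-handled linear term), while the other block requires $G(Tu,v,\mu,\eps)=G(u,v,\mu,\eps)$, that is, invariance of the slow drift under $T$; this is the natural companion assumption implicit in the phrase ``commutes with the fast vector field'', and I would state it explicitly at the outset of the proof.

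Next I would identify the augmented centre subspace $\tilde\cE_c = \cE_c\times\RSet^m$ via the projectors \cref{eq:projectors}, and observe that $\tilde T$ leaves $\tilde\cE_c$ invariant and restricts on it to $\tilde T_c = \diag(T_c,\id_{\RSet^m})$. Endowing $\tilde\cE_c$ with the Hilbert-sum norm $\Vert(u_c,v)\Vert^2 = \Vert u_c\Vert_{\cE_c}^2+|v|^2$, the isometry property of $T_c$ and the triviality of $\id_{\RSet^m}$ immediately give that $\tilde T_c$ is an isometry of $\tilde\cE_c$. All other hypotheses required for the equivariant centre-manifold theorem were already verified in the three steps of the proof of \cref{thm:centreManifoldWithParameters}, so the equivariant theorem produces a reduction function $\tilde\Psi\colon \tilde\cE_c\times\RSet^p\times\RSet\to\tilde\cZ_{su}$ satisfying $\tilde T\tilde\Psi(\tilde u_c,\mu,\eps)=\tilde\Psi(\tilde T_c\tilde u_c,\mu,\eps)$, and a reduced vector field commuting with $\tilde T_c$.

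Finally, exactly as at the end of the proof of \cref{thm:centreManifoldWithParameters}, I would exploit $\tilde\cZ_{su}=\cZ_{su}\times\{0\}$ to write $\tilde\Psi=(\Psi,0)$ for some $\Psi\in C^k(\cE_c\times\RSet^m\times\RSet^p\times\RSet,\cZ_{su})$. Reading the block identity $\tilde T\tilde\Psi=\tilde\Psi\tilde T_c$ componentwise then yields $T\Psi(u_c,v,\mu,\eps)=\Psi(T_c u_c,v,\mu,\eps)$, and the commutation of the reduced vector field \cref{eq:reducedEquation} with $(T_c,\id_{\RSet^m})$ follows upon substituting the block form of $\tilde T_c$ into the equivariance of the reduced field on $\tilde\cE_c$. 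The only real obstacle is the bookkeeping in the verification $\tilde T\tilde R=\tilde R\tilde T$, where one must keep track of the term $D_vR(0,0,0,0)v$ that has been transferred from $\tilde R$ into $\tilde L$; everything else is a direct transcription of the augmentation argument used in \cref{thm:centreManifoldWithParameters}.
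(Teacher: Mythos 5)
Your argument is exactly the ``minor modification'' of \cref{thm:centreManifoldWithParameters} that the paper alludes to when it states this result without proof: augment the symmetry to $\tilde T=\diag(T,\id_{\RSet^m})$, check commutation with $\tilde L$ and $\tilde R$, observe that $\tilde T_c=\diag(T_c,\id_{\RSet^m})$ is an isometry of $\tilde E_c$ in any natural product norm, and invoke the equivariant version of the parameter-dependent centre-manifold theorem before reading off $\Psi$ from $\tilde\Psi=(\Psi,0)$. The step $T\,D_vR(0,0,0,0)=D_vR(0,0,0,0)$ obtained by differentiating the equivariance of $R$ in $v$ is the right way to reconcile the split $\tilde F=\tilde L+\tilde R$; note also that $TL=LT$ implies $TP_c=P_cT$, so $E_c$ is indeed $T$-invariant and $T_c$ is well defined, which you use implicitly.

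More significantly, you have correctly spotted that the commutation of $\tilde T$ with $\tilde R$ --- and hence the claimed commutation of the full reduced vector field \cref{eq:reducedEquation} with $(T_c,\id)$ --- requires the additional invariance
\[
  G(Tu,v,\mu,\eps)=G(u,v,\mu,\eps),
\]
which does not follow from $TL=LT$ and $TR(u,\cdot)=R(Tu,\cdot)$. As stated, the theorem omits this hypothesis: without it the slow component $\dot v=\eps G(u_c+\Psi(u_c,v,\mu,\eps),v,\mu,\eps)$ need not be preserved by $u_c\mapsto T_cu_c$. In the paper's concrete application (\cref{hyp:O2Equivariance} with $G\equiv\eps$) the condition is trivially satisfied, which explains why the omission is harmless there, but it should be added to the theorem's hypotheses for the conclusion to hold in general. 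Your decision to state the extra assumption explicitly at the outset is the correct repair.
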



\section{Reduction around bifurcations of the fast subsystem} \label{sec:reductions}
We now specialise the result of the previous section, and find reductions of the full
systems around saddle-node, Hopf, and Turing bifurcations of the fast subsystem. We
will also derive normal form calculations for all these cases.

\subsection{Reduction around a fold of the critical manifold}\label{sec:reductionFold}
Before deriving a normal form around the fold of the fast subsystem, we make
additional assumptions that are verified at the saddle-node bifurcation point:
\begin{hypothesis}[Fold of the critical manifold]\label{hyp:fold}
Let $\left( e_i \right)_{i=1}^m$ be the canonical basis in $\RSet^m$. 
  \begin{romannum}
  \item The centre spectrum $\sigma_c$ of $L$ is given by $\sigma_c = \{0\}$. The eigenvalue
    $0$ has multiplicity $1$ and eigenvector $\zeta \in \cZ$.
  \item We have
    $P_c(D^2_u R(0,0,0,0) \zeta^{(2)}) \neq 0$.
    \item There exists $1\leq j \leq m$ such that  
      $P_c(D_vR(0,0,0,0) e_j)\neq0$.
  \end{romannum}
\end{hypothesis}
\begin{remark} When $L$ has an adjoint operator $L^*$ (this happens for instance when $\cZ$ is dense in
  $\cX$) then there is a general expression for the centre projector, namely $P_c(u) =
  \langle \zeta^* ,u\rangle_{\cX} \zeta$ where $\zeta^*$ is the eigenvector of $L^*$
  for the eigenvalue $0$ such that $\langle \zeta^* ,\zeta\rangle_{\cX} =1$.
\end{remark}

\begin{lemma}[Normal form for fold] \label{lem:FoldNormalForm}
  Assume \cref{hyp:fold}(i) and the hypotheses of \cref{cor:reducedEquation}, and let
  $O_\mu$, $O_\eps$ be defined as in \cref{cor:reducedEquation}. There exist
  neighbourhoods $O_A$, $O_B$ of $0$ in $\RSet$ and $\RSet^m$, respectively, such
  that, for any $(\mu,\eps) \in O_\mu \times O_\eps$ there is a change of variables,
  defined in $O_A \times O_B$ which transforms the reduced
  equation~\cref{eq:reducedEquation} into 
\begin{equation}\label{eq:CMFold}
  \begin{aligned}
    \dot A & = \alpha(B) + \beta A^2 + \gamma(\mu,\eps) 
+ O(A^2(\norm{B}+|\mu|+|\eps|)+(\norm{B}+|\mu|+|\eps|)^2)\\
    \dot B & = \eps G\big(A \zeta + \Psi(A,B,\mu,\eps),B,\mu,\eps\big) \\
  \end{aligned}
\end{equation}
where $A(t) \in \RSet$, $B(t) \in \RSet^m$ and
\begin{align*}
\alpha(B)\zeta & = P_c(D_vR(0,0,0,0) B), 
\\
\beta\zeta & =P_c(D^2_u R(0,0,0,0) \zeta^{(2)} ), 
	        	\\
\gamma(\mu,\eps)\zeta & = P_c(D_\mu R(0,0,0,0) \mu + D_\eps R(0,0,0,0) \eps ).
\end{align*}
If, in addition, \cref{hyp:fold}(ii) and \cref{hyp:fold}(iii) hold, we have $\alpha
\neq 0$ and $\beta \neq 0$, respectively.
\end{lemma}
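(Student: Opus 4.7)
The plan is to apply \cref{cor:reducedEquation} to obtain a finite-dimensional slow-fast system, parametrise the one-dimensional centre subspace to produce a scalar fast equation, then Taylor-expand the nonlinearity to identify the normal-form coefficients and complete the argument with a near-identity change of variables that absorbs extraneous terms. By \cref{hyp:fold}(i), $\sigma_c = \{0\}$ is a simple eigenvalue with eigenvector $\zeta$, so $E_c = \spn\{\zeta\}$ is one-dimensional and $\range P_c = \spn\{\zeta\}$. In \cref{eq:reducedEquation} I would therefore write $u_c = A\zeta$ with $A \in \RSet$, rename $v = B \in \RSet^m$, and use $L\zeta = 0$ together with $\Psi \in Z_{su}$ and $L Z_{su} \subset \ker P_c$ to collapse the first equation to the scalar identity $\dot A\, \zeta = P_c R(A\zeta + \Psi(A\zeta,B,\mu,\eps), B, \mu, \eps)$.

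Next, I would Taylor-expand $R$ around the origin using $R(0,0,0,0) = 0$, $D_uR(0,0,0,0) = 0$ from \cref{hyp:vectField}, and $\Psi(0,0,0,0)=0$, $D_u\Psi(0,0,0,0)=0$ from \cref{thm:centreManifoldWithParameters}, obtaining
\[
R(A\zeta + \Psi, B, \mu, \eps) = D_v R\,B + D_\mu R\,\mu + D_\eps R\,\eps + \tfrac{1}{2} D_u^2 R (A\zeta)^{(2)} + \mathcal{Q},
\]
where $\mathcal{Q}$ collects cubic and mixed contributions. Applying $P_c$ and inserting the definitions of $\alpha, \beta, \gamma$ produces, at leading order, $\alpha(B) + \gamma(\mu,\eps) + \tfrac{1}{2}\beta A^2$ (the factor $1/2$ being absorbed by a constant rescaling of $A$), plus cross terms of the form $A\,\ell(B,\mu,\eps)$ arising from mixed partials of $R$ and from the $(B,\mu,\eps)$-dependence of $\Psi$, which are not manifestly of the order stated in the remainder of \cref{eq:CMFold}.

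Finally, I would eliminate these cross terms by a near-identity, $(\mu,\eps)$-dependent change of variables $A \mapsto A - h(B,\mu,\eps)$, with $h$ linear in its arguments and chosen by matching coefficients to cancel the offending contributions; the implicit function theorem ensures $h$ is well-defined and smooth on a neighbourhood of the origin. Since $\dot B = O(\eps)$, this transformation generates only controllable remainders of the required order, the slow equation is preserved to leading order, and \cref{eq:CMFold} follows. Non-degeneracy is immediate from \cref{hyp:fold}: part (ii) is precisely $\beta \neq 0$, and part (iii) provides $j$ with $\alpha(e_j)\neq 0$, so $\alpha$ does not vanish identically. The main obstacle will be the bookkeeping of residual terms into the stated $O$-remainder and the explicit construction of the near-identity change of variables that kills the linear-in-$A$ cross terms without disturbing the slow equation at leading order.
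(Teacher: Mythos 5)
Your approach agrees with the paper's for the first two steps: you both parametrise the one-dimensional centre space as $u_c = A\zeta$, $v = B$, collapse the fast equation in \cref{eq:reducedEquation} to the scalar identity $\dot A\,\zeta = P_c R(A\zeta + \Psi,B,\mu,\eps)$, and Taylor-expand $R$ using $R_{0000}=0$, $R_{1000}=0$ from \cref{hyp:vectField}.

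Where you diverge is the final step, and this divergence is interesting because the paper's own proof actually stops short of what you do. The paper's proof ends with a remainder of the form $O(\sum_{i+j+r+s=2,\,i\neq2} A^i |B|^j |\mu|^r \eps^s)$, which contains the mixed terms $A|B|$, $A|\mu|$, $A\eps$. Such terms are \emph{not} bounded by the remainder $O\big(A^2(\|B\|+|\mu|+|\eps|)+(\|B\|+|\mu|+|\eps|)^2\big)$ claimed in \cref{eq:CMFold}: for $|A| \gg \|B\|+|\mu|+|\eps|$ one has $|A|\cdot p \gg A^2 p + p^2$. Your extra near-identity shift $A \mapsto A - h(B,\mu,\eps)$, with $h$ linear, is precisely the manoeuvre that kills these linear-in-$A$ cross terms (completing the square in $A$), so your argument genuinely closes a gap that the paper's proof leaves open if one reads the remainder in \cref{eq:CMFold} literally. (Comparison with Remark~\ref{rem:FoldNormalForm}, which reproduces the \emph{larger} remainder after a ``further change of coordinates,'' suggests the printed remainder in \cref{eq:CMFold} may simply be inaccurate; but taking the statement at face value, the extra step is needed.)

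Two caveats you should address to make your argument airtight. First, the same counting shows that terms of order $A^3$ (which enter as soon as $k\geq 3$, from $R_{3000}$) are also not absorbed by the stated remainder unless the coefficient of $A^2$ is allowed to depend on $A$; your linear-in-$A$ shift does not remove them, and you would need either an $A$-dependent near-identity transformation or a redefinition of $\beta$ to cover this. Second, the shift $A \mapsto A - h(B,\mu,\eps)$ alters the slow equation, so the stated form $\dot B = \eps G\big(A\zeta + \Psi(A,B,\mu,\eps),\dots\big)$ only survives if you absorb the shift into a redefined reduction function $\tilde\Psi(A,B,\mu,\eps) = -h(B,\mu,\eps)\zeta + \Psi(A - h(B,\mu,\eps),B,\mu,\eps)$; your remark that ``the slow equation is preserved to leading order'' should be made precise along these lines. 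Finally, your observation about the factor $1/2$ in $R_{2000} = \tfrac12 D_u^2 R$ is correct: the paper drops it between the Taylor identities and the final display, so the printed definition of $\beta$ is off by a factor of two; absorbing it by rescaling $A$, as you propose, also rescales $\alpha$ and $\gamma$, so the cleanest fix is to define $\beta\zeta = \tfrac12 P_c\big(D_u^2 R(0,0,0,0)\zeta^{(2)}\big)$.
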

\begin{proof}
  We fix $(\mu,\eps) \in O_\mu \times O_\eps$. From $X_c=\spn(\zeta)$, we write
  $( u_c, v ) = (A \zeta, B)$,  where $A \in \RSet$, $B \in \RSet^m$. This parametrization is
  defined in a neighbourhood $O_A \times O_B$ of $0 \in \RSet \times \RSet^m$. From
  \Cref{eq:reducedEquation} and the definition of $F$, we obtain
  \begin{equation}\label{eq:reducedEquationRepeat}
  \begin{aligned}
    \dot{A} \zeta &= P_c R(A \zeta+\Psi(A \zeta,B,\mu,\eps),B,\mu,\eps)\\
    \dot B    &= \eps G(A \zeta+\Psi(A \zeta,B,\mu,\eps),B,\mu,\eps).
  \end{aligned}
  \end{equation}
  The second equation in the system above is already written as in \Cref{eq:CMFold}
  of the statement, therefore we concentrate henceforth only on the first equation.
  
  In the remainder of this proof we set $\Vert \blank \Vert = \Vert \blank
  \Vert_{\cZ}$, and with a small abuse of notation we use $| \blank |$ for the 2-norm
  in both $\RSet^p$ and $\RSet^m$. We consider the Taylor expansion of $R$ around
  the origin 
  \[
    R(u,v,\mu,\eps) 
    = \sum_{0 \leq i+j+r+s \leq 2} R_{ijrs}(u^{(i)},v^{(j)},\mu^{(r)},\eps^{(s)}) + O\bigg( \sum_{i+j+r+s=3} \Vert u
    \Vert^i |v|^j |\mu|^r \eps^s \bigg),
  \]
  and we recall that $R_{0000} = 0$, $R_{1000}(u) = 0$ by \cref{hyp:vectField}, hence
  \[
    \begin{aligned}
    R(u,v,\mu,\eps) = R_{0100}(v) + R_{0010}(\mu) + R_{0001}(\eps) & + R_{2000}(u^{(2)})  \\
    & + O\bigg( \sum_{\substack{i+j+r+s=2,\\ i \neq 2}} \Vert u
    \Vert^i |v|^j |\mu|^r \eps^s \bigg)
    \end{aligned}
  \]
  Using the following identities
  \begin{align*}
    & R_{0100}(v) = D_vR(0,0,0,0) v, && R_{0010}(\mu) = D_\mu R(0,0,0,0) \mu, \\
    & R_{0001}(\eps) = D_\eps R(0,0,0,0) \eps, && R_{2000} (u^{(2)}) = \frac{1}{2} D^2_uR(0,0,0,0) u^{(2)}
  \end{align*}
and recalling that $\Psi = O(\Vert u \Vert^2+|v|^2+|\mu|+\eps)$, we obtain
\[
  \begin{aligned}
  \dot{A}\zeta = P_c(D_vR(0,0,0,0) B ) 
  & + A^2 P_c(D^2_uR(0,0,0,0) \zeta^{(2)} )  \\
  & +P_c(D_\mu R(0,0,0,0) \mu + D_\eps R(0,0,0,0) \eps) \\
  & + O\bigg( \sum_{\substack{i+j+r+s=2,\\ i \neq 2}} \Vert u \Vert^i |v|^j |\mu|^r \eps^s \bigg)
  \end{aligned}
\]
which, combined with the second equation in \cref{eq:reducedEquationRepeat} proves
that \cref{eq:CMFold} holds in $O_A \times O_B$. \Cref{hyp:fold}(iii) guarantees
$\beta \neq 0$ and \Cref{hyp:fold}(iv) implies that $\alpha$ is nonzero in $O_B
\setminus \{0\}$.
\end{proof}

\begin{remark} \label{rem:FoldNormalForm}
  Note that, with a further change of coordinates, \Cref{eq:CMFold} can
  be transformed into 
  \[
  \begin{aligned}
    \frac{d}{dt} A & = \alpha \tilde B_1 + \beta A^2 
    + O\bigg( \sum_{\substack{i+j+r+s=2,\; i \neq 2}} A^i |\tilde B|^j |\mu|^r \eps^s \bigg) \\
  \frac{d}{dt} \tilde{B} & = \eps \tilde G\big(A \zeta + \Psi(A,\tilde B,\mu,\eps), \tilde B,\mu,\eps\big)
  \end{aligned}
  \]
\end{remark}

The reduction above leads to studying an ODE with $1$ fast and $m$ slow variables,
near the fold. 
This system is amenable to applying existing results on
canards in this context. We mention here classical results on canards for $m=1$
\cite{BCDD1981,DR96,KS01a,KS01b}, for $m =2$ near a folded singularity 
\cite{SW01,W05,BKW06} and their generalisation to the case of $m$ slow variables
\cite{W12}.

\subsection{Reduction around a Hopf bifurcation in fast subsystem}\label{sec:reductionHopf}
\begin{hypothesis}[Hopf bifurcation]\label{hyp:Hopf}
  The centre spectrum $\sigma_c$ of $L$ consists of a simple pair of complex
    conjugate purely imaginary eigenvalues, $\sigma_c = \{i \omega, -i\omega\}$,
    $\omega {>0}$, with associated eigenvectors $\{\zeta,\bar \zeta\}$.
\end{hypothesis}

\begin{lemma}[Normal form of Hopf bifurcation]\label{lem:HopfNormalForm}
  Assume \cref{hyp:Hopf} and the hypotheses of \cref{cor:reducedEquation}. Then there
  exist neighborhoods $W_u, W_v, W_\mu, W_\eps$ in $O_u,O_v,O_\mu,O_\eps$,
  respectively, such that for any $(\eps,\mu) \in
  W_\mu \times W_\eps$ there exists a $(\mu,\eps)$-dependent polynomial change of
  variables 
  \begin{equation}\label{eq:HopfChangeOfVariables}
    \begin{aligned}
  u_c & = A\zeta + \bar A \bar \zeta+ \Phi(A\zeta + \bar A \bar \zeta,B,\mu,\eps) \\
  v   & = B
   \end{aligned}
  \end{equation}
  with $A\in \CSet$, $B\in \RSet^m$, $\Phi(\blank,v,\mu,\eps)$ a polynomial of degree
  $k$, $\Phi(0,0,0,0) = 0$, $D_u\Phi(0,0,0,0) = 0$, and whose monomials of
  degree $q$ are functions of $(v,\mu,\eps)$ of class $C^{k-q}$, which transforms the reduced
  equation~\cref{eq:reducedEquation} for
  $(u_c,v) \in W_u \times W_v$ into 
  \begin{equation}\label{eq:HopfNormalForm}
    \begin{aligned}
      \dot A & = i \omega A + \alpha(B,\mu,\eps)A + \beta A |A|^2 + O\big( (|B| + |\mu| + \eps + |A|^2 )^2 \big) \\
      \dot B &= \eps G(A \zeta+\bar A \bar \zeta+\Phi(A \zeta+\bar A \bar \zeta,B,\mu,\eps),B,\mu,\eps)
    \end{aligned}
  \end{equation}
  where $\alpha \colon \RSet^m \times \RSet^p \times \RSet \to \CSet$ is multilinear
  and $\beta \in \RSet$.
\end{lemma}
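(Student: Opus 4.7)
The plan is to apply the standard parameter-dependent Poincar\'e--Dulac normal form procedure (e.g.\ Theorem~3.4 of \cite{Haragus2010}) to the reduced equation \cref{eq:reducedEquation} of \cref{cor:reducedEquation}, treating $(B,\mu,\eps)$ jointly as parameters and choosing the change of variables to act trivially on the slow component, so that the second equation of \cref{eq:HopfNormalForm} follows immediately from the slow part of \cref{eq:reducedEquation}.

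First I would reduce the problem to a scalar complex ODE for the fast centre variable. Under \cref{hyp:Hopf}, the complexified centre subspace is $E_c = \spn_{\CSet}\{\zeta,\bar\zeta\}$; writing $u_c = A\zeta + \bar A\bar\zeta$ with $A\in\CSet$, and using a dual eigenvector $\zeta^*$ of the adjoint associated with $-i\omega$, normalised so that $\langle \zeta^*,\zeta\rangle_\cX = 1$ and $\langle \zeta^*,\bar\zeta\rangle_\cX = 0$, the spectral projector on $\spn\{\zeta\}$ acts as $w\mapsto \langle \zeta^*,w\rangle_\cX \zeta$. Projecting the first line of \cref{eq:reducedEquation} through $\zeta^*$ produces a single complex ODE
\[
  \dot A = i\omega A + N(A,\bar A, B, \mu, \eps),
\]
with $N(0,0,0,0,0)=0$ and $\partial_A N = \partial_{\bar A} N = 0$ at the origin by \cref{hyp:vectField}(ii).

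Next I would invoke the parametric normal form theorem on this scalar equation. The linear part at parameter zero has eigenvalues $\{i\omega,-i\omega\}$, so the resonance condition for a monomial $A^p\bar A^q$ appearing in the $\dot A$-equation is $p-q=1$. Up to cubic order in $A$ and to first order in $(B,\mu,\eps)$, the only resonant monomials are $A$, $A$ times any component of $(B,\mu,\eps)$, and $A|A|^2$. A near-identity polynomial change of variables of the form \cref{eq:HopfChangeOfVariables}, built degree by degree by solving the associated homological equations on $E_c$, eliminates all non-resonant terms at these orders and delivers precisely the first line of \cref{eq:HopfNormalForm}: $\alpha(B,\mu,\eps)$ collects the surviving terms linear in $A$ and multilinear in $(B,\mu,\eps)$, while $\beta$ is the surviving cubic self-interaction coefficient.

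The slow equation follows automatically, since \cref{eq:HopfChangeOfVariables} does not modify $v=B$, and substituting the expression for $u_c$ into the second equation of \cref{eq:reducedEquation} yields the second line of \cref{eq:HopfNormalForm}. The main technical point is the uniform-in-parameters construction of $\Phi$ with the stated regularity, namely a polynomial of degree $k$ in $A\zeta+\bar A\bar\zeta$ whose degree-$q$ monomials depend on $(B,\mu,\eps)$ in class $C^{k-q}$; this is exactly the content of the parametric normal form theorem referenced above, and its application here is routine once the scalar equation for $A$ has been set up.
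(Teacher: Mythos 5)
Your proof follows essentially the same route as the paper's: both apply the parameter-dependent normal form theorem of Haragus and Iooss to the fast part of the reduced equation on $E_c$, treating $(B,\mu,\eps)$ jointly as parameters, and both observe that the change of variables leaves the slow equation intact. The only organizational difference is the order of operations: you first complexify (project onto $\spn\{\zeta\}$ via the dual eigenvector $\zeta^*$ to get a scalar ODE for $A$) and then invoke the normal form theorem, whereas the paper applies the normal form theorem to the two-dimensional real system on $E_c$ and then identifies the complex coordinate $A$ via \cite[Lemma 1.7, Chapter 3]{Haragus2010}; these are interchangeable. Your explicit mention of the resonance condition $p-q=1$ is useful expository detail that the paper delegates to the cited reference, and your final paragraph correctly attributes the polynomial structure and $C^{k-q}$ regularity of $\Phi$ to the parametric normal form theorem, matching the paper's reasoning.
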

\begin{proof}
  In this case, \Cref{eq:reducedEquation} leads to
  \begin{equation}\label{eq:reducedEquationRepeated}
  \begin{aligned}
    \dot{u}_c &= L_c u_c + P_c R(u_c+\Psi(u_c,v,\mu,\eps),v,\mu,\eps),\\
    \dot v    &= \eps G(u_c+\Psi(u_c,v,\mu,\eps),v,\mu,\eps).\\
  \end{aligned}
  \end{equation}
  with $L_c u_c \neq 0$. The operators $L_c$ and $L$ have an identical centre
  spectrum, and by \cref{hyp:Hopf} we have $E_c= \spn\{\zeta,\bar \zeta\}$, hence
  $\dim E_c = 2$. As in the case of saddle-node bifurcations, we write a normal form
  for the first of the two equations above, when $v$ is fixed. We set $v=B$ and
  consider 
  \[
    \dot{u}_c = L_c u_c + P_c R(u_c+\Psi(u_c,B,\mu,\eps),B,\mu,\eps).
  \]
  We use a parameter-dependent normal form result \cite[Theorem 2.2, Chapter
  3]{Haragus2010} on the $2$-dimensional system above, with parameters $(B,\mu,\eps)
  \in \RSet^m \times \RSet^p \times \RSet$. There exist neighbourhoods 
  $W_u, W_v, W_\mu, W_\eps$ in $O_u,O_v,O_\mu,O_\eps$
  respectively, and a polynomial $\Phi$ such that, for any $(B,\mu,\eps) \in W_v,
  \times W_\mu \times W_\eps$, the change of variables $u_c = \nu_c +
  \Phi(\nu_c,B,\mu,\eps)$ transforms the system above into the normal form
  \[
    \dot \nu_c = \Lambda \nu_c + N(\nu_c,B,\mu,\eps) + o(|(B,\mu,\eps)|^q)
  \]
  for any integer $q$ with $2 \leq q \leq k$, where $k$ is fixed as in
  \cref{hyp:vectField}(ii). The fact that $\Phi$ is a polynomial, and the properties
  listed in the statement for $\Phi$ are also a consequence of \cite[Theorem 2.2,
  Chapter 3]{Haragus2010}.
  We aim to recover the normal form for a Hopf bifurcation,
  hence it is convenient to identify $\RSet^2$ with the diagonal $\{(z,\bar z) \colon
  z \in \CSet\}$, thereby representing $\nu_c$ as $\nu_c(t) = A(t) \zeta + \bar A(t) \bar \zeta$
  and obtaining \cite[Lemma 1.7, Chapter 3]{Haragus2010}
  \[
    \frac{d}{dt}
    \begin{pmatrix}
      A \\
      \bar A \\
    \end{pmatrix}
    = 
    \begin{pmatrix}
      i \omega &           0 \\
      0        & -i \omega  
    \end{pmatrix}
    \begin{pmatrix}
      A \\
      \bar A \\
    \end{pmatrix}
    +
    \begin{pmatrix}
      \alpha(B,\mu,\eps) A + \beta A |A|^2 \\
      \alpha(B,\mu,\eps) \bar A + \beta \bar A |A|^2
    \end{pmatrix}
    + O\big( (|B| + |\mu| + \eps + |A|^2 )^2 \big) \\
  \]
  where $\alpha$ is a linear form and $\beta$ a real number. We have therefore shown
  the existence of a change of coordinates of type \cref{eq:HopfChangeOfVariables}
  which transforms the reduced equation
  \cref{eq:reducedEquationRepeated} into \cref{eq:HopfNormalForm}. 
\end{proof}

\begin{remark} 
  We highlight that, in contrast to the reduction around a saddle-node bifurcation,
  we do not include here explicit expressions for the coefficients of the normal
  form $\alpha$, $\beta$, and we do not include in \cref{hyp:Hopf} non-degeneracy
  conditions to guarantee $\alpha, \beta \neq 0$. These conditions are known for ODEs
  \cite{kuznetsov2013elements}, and computing the coefficients is strongly
  problem-dependent, therefore we omit such calculations in the present paper.
\end{remark}

The centre manifold reduction leads to a $C^k$ (and not analytic) ODE with $2$ fast and 
$m$ slow variables. Hence, this system is amenable to applying existing $C^k$ results on 
delayed loss of stability through a Hopf bifurcation. We refer to classical results for $m=1$, e.g.,
\cite{neishtadt87,neishtadt88,HKSW16}. These $C^k$ results guarantee the existence of a
short delay ($O(\eps\ln(\eps))$) due to the slow passage. 
By \cref{eq:HopfChangeOfVariables}, we deduce that to first order
the fast variables on the centre manifold are approximated by $A(t) \xi(x) +
\bar A(t) \bar \xi(x)$, where $\xi(x)$ is the unstable Hopf mode, and where $A(t)$ has the
dynamics of a slow passage through a Hopf bifurcation. The existence of long $O(1)$
delays associated with slow passage through a Hopf bifurcation in a PDE in the
analytic context is still an open problem.

\subsection{Reduction around Turing bifurcation}
In order to perform a reduction around a Turing bifurcation for the applications, we
make some additional assumptions concerning the symmetry of the problem.

\begin{hypothesis}[O(2)-equivariance] \label{hyp:O2Equivariance}
  There exists a one-parameter continuous family
 of linear maps $T_\phi$ on $X$, for $\phi\in
\RSet/2 l\ZSet$, $l \in \RSet$, and a symmetry $S$ on $X$ such that:
\begin{romannum}
  \item $T_\phi\circ T_\psi = T_{\phi+\psi}$ and $S T_\phi=T_{-\phi}S$ 
	for $\phi,\psi \in \RSet/2l \ZSet$ 
  \item $T_0=\id$ and $S^2=\id$
  \item The fast vector field $u \mapsto F(u,v,\mu,\varepsilon)$ commutes with
    $T_\phi,S$.
\end{romannum}
\end{hypothesis}

\begin{lemma}[Normal form of Pitchfork O(2) bifurcation] \label{lem:TuringNormalForm}
  Assume \cref{hyp:O2Equivariance}, and the hypotheses of \cref{cor:reducedEquation}.
  Further, assume that $0$ is a double eigenvalue of $L$ and no other eigenvalue is in
  $\sigma_c$ for $(\varepsilon,\mu)=(0,0)$. Assume that the action of $T_\phi$ on $X_c$ is not
  trivial. Then, there exist neighborhoods $O_u,O_v,O_\mu,O_\eps$ of $0$ in
  $\RSet^2$, $\RSet^m$, $\RSet^p$, and $\RSet$, respectively, such that for any
  $(\eps,\mu) \in O_\mu \times O_\eps$,  in a suitable basis $(\zeta,\bar\zeta)$ of $\ker L$
  \begin{equation}\label{eq:coordChangePitch}
  \begin{aligned}
  u_c & = A\zeta + \bar A \bar \zeta+ \Phi(A\zeta + \bar A \bar \zeta,B,\mu,\eps) \\
  v   & = B
  \end{aligned}
  \end{equation}
  with $A\in \CSet$, $B \in \RSet^m$, $\Phi(\blank,v,\mu,\eps)$ a polynomial of degree
  $k$, $\Phi(0,0,0,0) = 0$, $D_u\Phi(0,0,0,0) = 0$, and whose monomials of
  degree $q$ are functions of $(v,\mu,\eps)$ of class $C^{k-q}$. The reduced
  equation~\cref{eq:reducedEquation} for
  $(u_c,v) \in O_u \times O_v$ has, at third order in $(A,\bar A)$, the expression
  \begin{equation}\label{eq:normFormPitch}
  \begin{aligned}
  \dot A & = \alpha(B,\mu,\eps)A + \beta A |A|^2 + O\big( (|B| + |\mu| + \eps + |A|^2 )^2 \big) \\
  \dot B &= \eps G(A \zeta+\bar A \bar \zeta+\Phi(A \zeta+\bar A \bar \zeta,B,\mu,\eps),B,\mu,\eps)
  \end{aligned}
  \end{equation}
  where $\alpha,\beta$ are real multilinear forms.
\end{lemma}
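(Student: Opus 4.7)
The plan mirrors the Hopf argument in \cref{lem:HopfNormalForm}, but exploits the $O(2)$-equivariance afforded by \cref{hyp:O2Equivariance}. The first step is to invoke \cref{cor:reducedEquation} together with the equivariant centre-manifold result of \cref{sec:reductionSymmetries}, which ensures that the reduction function $\Psi$ commutes with both $T_\phi$ and $S$. The reduced vector field on $E_c \times \RSet^m$ then commutes with $(T_\phi|_{E_c}, \id)$ and $(S|_{E_c}, \id)$ for each fixed $(B,\mu,\eps)$. Since $\dim E_c = 2$ (the eigenvalue $0$ is double) and $T_\phi|_{E_c}$ acts non-trivially by hypothesis, this action is equivalent to the standard $SO(2)$-action on $\CSet \simeq \RSet^2$. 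One then selects a complex basis $(\zeta,\bar\zeta)$ of the complexification of $E_c$ such that $T_\phi \zeta = e^{i\phi}\zeta$ and $S\zeta = \bar\zeta$, and writes $u_c = A\zeta + \bar A\bar\zeta$ with $A \in \CSet$.

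The second step, with $(B,\mu,\eps)$ treated as parameters, is to apply the parameter-dependent equivariant normal form theorem from \cite[Chapter 3]{Haragus2010} to the two-dimensional equation
\[
  \dot u_c = L_c u_c + P_c R\big(u_c + \Psi(u_c, B, \mu, \eps), B, \mu, \eps\big).
\]
This produces a polynomial change of coordinates $u_c = \nu_c + \Phi(\nu_c, B, \mu, \eps)$ of the form asserted in \cref{eq:coordChangePitch}, whose regularity and polynomial structure in $\nu_c$ come directly from the theorem, and whose transformed vector field commutes with $T_\phi$ and $S$ up to the truncation order $k$.

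In the coordinates $(A,\bar A)$, equivariance under $T_\phi$ forces each monomial $A^j \bar A^l$ appearing in the right-hand side of $\dot A$ to satisfy the resonance condition $j - l = 1$, while the reflection $S \colon A \mapsto \bar A$ forces the coefficient of each such monomial to be real. The lowest-order admissible terms are therefore $A$ and $A|A|^2$, producing the cubic truncation \cref{eq:normFormPitch} with $\alpha(B,\mu,\eps)$ a real-valued multilinear form in the parameters and $\beta \in \RSet$. Note that no $i\omega A$ term appears, in contrast with the Hopf case, since the centre spectrum is $\{0\}$ rather than $\{\pm i\omega\}$. The slow equation needs no transformation: \cref{eq:coordChangePitch} acts trivially on $v = B$, so substituting $u = A\zeta + \bar A\bar\zeta + \Phi + \Psi$ into the second line of \cref{eq:reducedEquation} reproduces the second line of \cref{eq:normFormPitch} verbatim.

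The main obstacle is the careful bookkeeping of how the two equivariance constraints jointly restrict the admissible resonant monomials and force the coefficients $\alpha$ and $\beta$ to be real. Rather than expanding monomials by hand, the cleanest route is to invoke the equivariant normal form machinery of \cite[Chapter 3]{Haragus2010}, which packages the resonance selection and the reality of the coefficients into a single statement under an $O(2)$-action.
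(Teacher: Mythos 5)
Your proposal is correct and takes essentially the same route as the paper: mirror the Hopf normal-form argument of Lemma~\ref{lem:HopfNormalForm}, then replace the Hopf normal form with the $O(2)$-pitchfork one, using the two equivariance constraints (rotation forcing $j-l=1$ in each monomial $A^j\bar A^l$, reflection forcing real coefficients) to reduce the cubic truncation to $\alpha A + \beta A|A|^2$ with $\alpha,\beta$ real. One minor imprecision worth flagging: the paper only asserts $T_\phi\zeta = e^{ik\phi}\zeta$ for some fixed $k\in\NSet$ (the $O(2)$-action on $E_c$ may factor through a $k$-fold cover), not $e^{i\phi}\zeta$; since the resonance condition $k(j-l)=k$ still yields $j-l=1$, the list of surviving monomials — and hence your conclusion — is unchanged.
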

\begin{proof}
This proof follows closely the one of Lemma~\ref{lem:HopfNormalForm} except that at
the end, we have to use the O(2)-pitchfork normal formal instead of the Hopf one. As
shown in \cite[Section 1.2.4]{Haragus2010}, one can find a suitable basis
$\zeta,\bar\zeta$ such that $T_\phi\zeta = e^{ik\phi}\zeta$ for some fixed
$k\in\mathbb N$ and $S\zeta=\bar\zeta$. In this
basis, we represent $\nu_c$ as
$\nu_c(t) = A(t) \zeta + \bar A(t) \bar \zeta$ and obtain at third order
\[
\dot A  = \alpha(B,\mu,\eps)A + N_3(A,\bar A,\mu,\eps)+ O\big( (|v| + |\mu| + \eps + |A|^2 )^2 \big).
\]
where $N_3$ is a cubic polynomial in $(A,\bar A)$ which commutes with the action of
the symmetry group. Thus $N_3(e^{i\phi}A,e^{-i\phi}\bar A,\mu,\varepsilon) =
e^{i\phi}N_3(A,\bar A,\mu,\varepsilon)$ and $\overline N_3(A,\bar A,\mu,\varepsilon) =
N_3(\bar A, A,\mu,\varepsilon)$.
This gives:
\[
\dot A  = \alpha(B,\mu,\eps)A + \beta A |A|^2 + O\big( (|v| + |\mu| + \eps + |A|^2 )^2 \big)
\]
where $\alpha,\beta$ are real valued.
\end{proof}

This lemma applies to the case of a PDE equivariant with respect to translations (which take the role of $T_\phi$) in one unbounded spatial direction and possesses a
reflection symmetry in this direction.
The reduction above leads to studying an ODE with $1$ fast and $m$ slow variables,
that is \Cref{eq:normFormPitch}. Results on slow-passages through transcritical and
pitchfork bifurcations for ODEs of this type are available in the literature
\cite{KS01c,boudjellaba2009dynamic}. For example, if $m=1$ and the coefficients $\alpha$ and $\beta$
are such that the layer problem undergoes a pitchfork bifurcation at the origin, then
the results in \cite{KS01c} imply that \Cref{eq:normFormPitch} has a delayed
pitchfork bifurcation. By \cref{eq:coordChangePitch}, we deduce that to first order
the fast variables on the centre manifold are approximated by $A(t) \xi(x) +
\bar A(t) \bar \xi(x)$, where $\xi(x)$ are Turing modes, and where $A(t)$ has the
dynamics of a slow passage through a pitchfork. This phenomenon is what we
call a slow passage through a Turing bifurcation.

\section{Applications} \label{sec:applications}
In this section we collect applications of centre-manifold reductions and normal
forms to several examples presented in \cref{sec:numericalExamples}. We highlight
that the order in which the material is presented does not follow the one used in
\cref{sec:numericalExamples}, because we have chosen to start from simple examples
and work towards more advanced material. 
In \cref{sec:preparatoryResults} we present preparatory results for Hilbert spaces,
which will be used in the following calculations. We then work through an example of
slow-passage through a Hopf bifurcation in \cref{eq:FN} in
\cref{sec:slowPassageHopfAnalysis}, assuming no prior knowledge
of centre-manifold reductions for infinite-dimensional systems. In
\cref{sec:slowPassageTuringAnalysis}, we discuss slow-passage through a Turing
bifurcation in the nonlocal reaction--diffusion equation~\cref{eq:nonlocalRD}, and
give pointers for performing the same analysis in local reaction--diffusion problems.
The technical treatment of these sections depends on the fact that Hilbert spaces have
been used in the accompanying examples. This motivates our presentation of the
example in \cref{sec:slowPassageFoldAnalysis}, which is posed on Banach spaces and
discusses canards in neural field models. Towards the end of this section, in
\cref{sec:AnalysisDDE}, we discuss slow passage through Hopf in the
DDE~\cref{eq:DDE}.

\subsection{Preparatory results} \label{sec:preparatoryResults}
We begin by introducing two lemmas that will be
useful in the upcoming calculations.

\begin{lemma}\label{lemma:resolvent-laplacian} Let $\Omega = (-l,l)$, for some $l>0$,
  and let $\cX = L^2(\Omega,\CSet)$ and $\cZ =
  H^2_\textrm{per}(\Omega,\CSet)$, 
  endowed with the standard norms.
  Further consider the Laplacian operator $\partial_x^2\in\mathcal L(\cZ,\cX)$. If
  $z\in\rho(\partial^2_x)$, then
\[
  \norm{(z - \partial^2_x)^{-1}}_{\mathcal L(\cX)}\leq \frac{1}{|\imag z|}.
\]
\end{lemma}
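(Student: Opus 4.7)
The plan is to exploit the fact that, with periodic boundary conditions, $\partial_x^2$ is a self-adjoint operator on $L^2(\Omega,\CSet)$ diagonalised by the Fourier basis. This reduces the resolvent bound to a pointwise estimate on scalars.

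First I would fix the orthonormal Fourier basis $e_k(x) = (2l)^{-1/2} \exp(i k \pi x/l)$ for $k \in \ZSet$, and recall that these satisfy $\partial_x^2 e_k = -(k\pi/l)^2 e_k$, with $\{e_k\}_{k\in\ZSet}$ a Hilbert basis of $\cX = L^2(\Omega,\CSet)$. Any $u \in \cX$ can therefore be written as $u = \sum_{k \in \ZSet} \hat u_k e_k$ with $\hat u_k = \langle e_k, u \rangle_\cX$, and Parseval's identity gives $\Vert u \Vert_\cX^2 = \sum_{k \in \ZSet} |\hat u_k|^2$. Membership in $\cZ = H^2_{\text{per}}(\Omega,\CSet)$ is characterised by $\sum_{k \in \ZSet}(1 + (k\pi/l)^4)|\hat u_k|^2 < \infty$.

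Next, for $z \in \rho(\partial_x^2)$, I would express the resolvent in this basis: since $(z - \partial_x^2)e_k = (z + (k\pi/l)^2)e_k$, and since $z$ avoids all eigenvalues $-(k\pi/l)^2$, the resolvent acts diagonally as
\[
  (z - \partial_x^2)^{-1} u = \sum_{k \in \ZSet} \frac{\hat u_k}{z + (k\pi/l)^2}\, e_k.
\]
Applying Parseval again,
\[
  \Vert (z - \partial_x^2)^{-1} u \Vert_\cX^2
  = \sum_{k \in \ZSet} \frac{|\hat u_k|^2}{|z + (k\pi/l)^2|^2}.
\]

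The core estimate is then the elementary bound $|z + a| \geq |\imag z|$ for every $a \in \RSet$, since $\imag(z+a) = \imag z$. Applying this with $a = (k\pi/l)^2 \in \RSet$ for each $k \in \ZSet$ yields $|z + (k\pi/l)^2|^{-2} \leq |\imag z|^{-2}$, and substituting back gives $\Vert (z - \partial_x^2)^{-1} u \Vert_\cX^2 \leq |\imag z|^{-2} \Vert u \Vert_\cX^2$, from which the claimed operator norm bound follows by taking the supremum over unit vectors.

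There is no real obstacle here: the only thing that needs a brief justification is that the spectrum of $\partial_x^2$ on $H^2_{\text{per}}$ is exactly $\{-(k\pi/l)^2 : k \in \ZSet\}$ (purely a point spectrum), so that the Fourier series representation of the resolvent is valid whenever $z$ avoids these values. The rest is the trivial scalar inequality $|\imag(z + a)| = |\imag z|$, which is what makes this bound dimension-free and independent of $l$.
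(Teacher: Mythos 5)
Your proof is correct, but it takes a genuinely different route from the paper. The paper's argument is a direct energy estimate: given $f \in \cX$ and the unique $u \in \cZ$ solving $(\partial_x^2 - z)u = f$, one multiplies by $\bar u$, integrates, and uses integration by parts (with the periodic boundary terms vanishing) to obtain $\Vert \partial_x u \Vert_\cX^2 - z \Vert u \Vert_\cX^2 = \langle f, u \rangle_\cX$; taking imaginary parts eliminates the real quantity $\Vert \partial_x u \Vert_\cX^2$ and yields $|\imag z|\,\Vert u \Vert_\cX^2 \leq \Vert f \Vert_\cX \Vert u \Vert_\cX$. You instead diagonalise $\partial_x^2$ in the Fourier basis, reduce the resolvent bound to the scalar estimate $|z + a| \geq |\imag z|$ for real $a$, and sum via Parseval. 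Both hinge on the same underlying fact (the eigenvalues, or the quadratic form $\Vert \partial_x u \Vert_\cX^2$, are real), and both are correct. The paper's sesquilinear-form route adapts with no change to Dirichlet or Neumann boundary conditions — as the subsequent remark notes — and more generally to self-adjoint operators without an explicit eigenbasis, while yours is more concrete and transparent but requires that $\partial_x^2$ be explicitly diagonalised, which is tied to the periodic geometry.
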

\begin{proof}
  We denote by $\Vert \blank \Vert_\cX$ and  $\langle \blank,\blank \rangle_\cX$ the
  standard norm and associated inner product on $\cX$, respectively. Let $
  z\in\rho(\partial^2_x)$. For any $f\in  X$, there is a
  unique solution $u \in \cZ$ to $\partial^2_xu-zu=f$. Multiplying the equation by
  the complex conjugate $\bar u$ of $u$,
  integrating over $\Omega$, and using integration by parts and boundary conditions
  we obtain
\[
  \norm{\partial_x u}_\cX^2-z\norm{u}_\cX^2 =  \langle
f, u\rangle_{\cX}.  \]
Taking the imaginary part of the equation above we obtain the bound
\[
|\imag z|\norm{u}_\cX^2 = |\imag \langle
f, u\rangle_{\cX}|\leq \norm{f}_\cX\norm{u}_\cX,
\qquad \textrm{hence} \qquad
\norm{u}_\cX\leq \frac{\norm{f}_\cX}{|\imag z|}
\]
and
\[
  \norm{(z - \partial^2_x)^{-1}}_{\mathcal L(\cX)} 
     = \sup_{\Vert f \Vert_\cX \neq 0}
     \frac{\norm{(z - \partial^2_x)^{-1} f}_\cX }
            { \norm{f}_\cX }
	    \leq \frac{1}{|\imag z|}.
\]
\end{proof}

\begin{remark}
  We note that \cref{lemma:resolvent-laplacian} can be derived in a similar fashion
  also when the problem is subject to to Neumann or Dirichlet boundary
  conditions. We omit these cases for simplicity.
\end{remark}

\begin{lemma}\label{lemma:bounded-pert}
  Let $A\in \mathcal L(\cZ,\cX)$, $B\in\mathcal L(\cX)$, and assume there exist
  $\omega_0, K> 0$ such that $\norm{(\iunit \omega
  - A)^{-1}}_{\mathcal L(\cX)}\leq K/|\omega|$ for all $\omega \in \RSet$ such that
  $|\omega|>\omega_0$. Then there exists $C>0$ such that
\[
\norm{(\iunit \omega - A-B)^{-1}}_{\mathcal L(\cX)}\leq \frac{C}{|\omega|}
\qquad \textrm{for all $|\omega|>\max(\omega_0,\norm{B}_{\mathcal{L}(X)}K)$.}
\]
\end{lemma}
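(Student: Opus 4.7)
The plan is a standard bounded-perturbation argument, inverting $\iunit\omega - A - B$ via a Neumann series around the known resolvent of $A$. First I would use the algebraic factorisation
\[
  \iunit\omega - A - B = (\iunit\omega - A)\bigl(\id - (\iunit\omega - A)^{-1} B\bigr),
\]
which is valid whenever $|\omega| > \omega_0$ since the resolvent $(\iunit\omega - A)^{-1}$ is supplied by hypothesis. From the hypothesised decay and submultiplicativity of the operator norm, one has
\[
  \bigl\|(\iunit\omega - A)^{-1} B\bigr\|_{\mathcal L(\cX)} \leq \frac{K\,\|B\|_{\mathcal L(\cX)}}{|\omega|},
\]
which is strictly less than one precisely when $|\omega| > K\|B\|_{\mathcal L(\cX)}$, matching the threshold in the statement.

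Next I would invert the bracketed factor by the Neumann series $\sum_{n\geq 0}((\iunit\omega - A)^{-1} B)^n$, whose convergence in $\mathcal L(\cX)$ follows from the above estimate. Composing with the resolvent bound for $A$ yields
\[
  \bigl\|(\iunit\omega - A - B)^{-1}\bigr\|_{\mathcal L(\cX)}
    \leq \frac{K}{|\omega| - K\|B\|_{\mathcal L(\cX)}}.
\]
To recast the right-hand side as $C/|\omega|$ I would restrict $|\omega|$ to be bounded away from the critical value $K\|B\|_{\mathcal L(\cX)}$; for instance, imposing $|\omega| \geq 2\max(\omega_0, K\|B\|_{\mathcal L(\cX)})$ forces the denominator to exceed $|\omega|/2$ and gives $C = 2K$.

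I do not anticipate any real obstacle, as the factorisation-plus-Neumann-series approach is the textbook tool for propagating a resolvent-decay estimate through a bounded perturbation. The only point to watch is a bookkeeping one: the bound as stated should be read on a threshold slightly strictly larger than $K\|B\|_{\mathcal L(\cX)}$, with the constant $C$ depending on the slack one allows. This result is exactly what is needed in applications to transfer the resolvent estimate from a tractable unperturbed generator (such as the Laplacian of \cref{lemma:resolvent-laplacian}) to the full linearisation appearing in the concrete slow--fast PDE examples, thereby enabling the verification of \cref{hyp:linearEquation} via the sufficient condition of \cite[Section 2.2.3]{Haragus2010}.
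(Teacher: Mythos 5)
Your argument is correct and matches the paper's proof in every essential respect: both rest on factoring the perturbed resolvent as a product of the known resolvent of $A$ and a factor of the form $\id$ minus a small operator, then inverting that factor by a Neumann series and pushing the resolvent-decay estimate through. The only cosmetic difference is the side on which you factor: you write $\iunit\omega - A - B = (\iunit\omega - A)\bigl(\id - (\iunit\omega - A)^{-1}B\bigr)$, whereas the paper uses the mirror-image identity $(\iunit\omega - A - B)^{-1} = (\iunit\omega - A)^{-1}\bigl[\id - B(\iunit\omega - A)^{-1}\bigr]^{-1}$; both yield the same estimate $\|(\iunit\omega - A - B)^{-1}\| \leq K / \bigl(|\omega| - K\|B\|_{\mathcal L(\cX)}\bigr)$.

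Your final remark about the threshold is a fair observation, and in fact applies equally to the paper's own proof. The bound $K/(|\omega| - K\|B\|)$ cannot be dominated by a single $C/|\omega|$ uniformly for all $|\omega| > \max(\omega_0, K\|B\|)$ unless $\omega_0$ is strictly larger than $K\|B\|$: as $|\omega| \downarrow K\|B\|$ the Neumann-series estimate diverges. The paper passes over this by writing $\leq C/|\omega|$ without specifying how $C$ depends on the gap $|\omega| - K\|B\|$. Your fix---tightening the threshold to, say, $2\max(\omega_0, K\|B\|)$, which yields $C = 2K$---is exactly the right bookkeeping and is in any case harmless for the downstream use of the lemma, since \cref{hyp:linearEquation} only requires the estimate for $|\omega|$ sufficiently large.
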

\begin{proof}
Since $i\omega\in \rho(A)$ and $\norm{B}_{\mathcal{L}(X)}K<|\omega|$, then  $ \id  -B
(i\omega- A)^{-1}$ is invertible. We use the following identity
\[
(i\omega -A-B)^{-1} = (i\omega   - A)^{-1} \left[\id  -B (i\omega- A)^{-1}\right]^{-1}
\]	
and the main hypotheses to derive the bound
\[
  \norm{(\iunit \omega - A-B)^{-1}}_{\mathcal L(\cX)}\leq
  \frac{K}{|\omega|}\frac{1}{1-\norm{B}_{\mathcal{L}(X)}K/|\omega|}\leq  \frac{C}{|\omega|}.
\]
\end{proof}	

\subsection{Analytical results for slow-passage through a Hopf bifurcation in the
FitzHugh-Nagumo PDE} \label{sec:slowPassageHopfAnalysis}
We now show how to apply our framework to the FitzHugh-Nagumo model
\begin{equation}\label{eq:FNRev}
  \begin{aligned}
    \partial_t u_1 & = \partial_x^2 u_1 + u_1 -u_1^3/3 -u_2 + v, 
		   && (x,t) \in (0,2\pi), \times \RSet_{>0}, \\
    \partial_t u_2 & =  u_1 + c - b u_2,
		   && (x,t) \in (0,2\pi) \times \RSet_{>0}, \\
            \dot v & = \eps,                                     
		   && t \in \RSet_{>0}
  \end{aligned}
\end{equation}
subject to periodic boundary conditions
\[
  u_1(0,t) = u_1(2\pi,t), \qquad u_2(0,t) = u_2(2\pi,t), \qquad t \in \RSet_{> 0}.
\]
We make the following assumption, in order to guarantee the existence of a
homogeneous steady state undergoing a Hopf bifurcation.
\begin{hypothesis}\label{hyp:FNHopf}
  There exist $u_{*1},u_{*2}, v_*, b_*, c_* \in \RSet$ such that
  $u(x,t)\equiv (u_{*1},u_{*2})$ is an equilibrium of \cref{eq:FNRev} for
  $(v,b,c,\eps) = (v_*,b_*,c_*,0)$, with $1-(u_{*1})^2 = b_* \in (0,1)$.
\end{hypothesis}
In order to cast the problem in the framework of the previous sections, we set
$u = (u_1,u_2)$, $\mu = (b,c)$, apply the change of variables $u = u_* + \tilde u$,
$v = v_* + \tilde v$, $\mu = \mu_* + \tilde \mu$, and obtain, after dropping tildes
and setting $a = 1 -u_{*1}^2 = b_*$ for notational convenience,
\begin{equation}\label{eq:slowFastInfRep}
  \begin{aligned}
  \dot u & = L u + R(u,v,\mu,\eps) \\
  \dot v & = G(u,v,\mu,\eps)
  \end{aligned}
\end{equation}
where
\begin{equation}\label{eq:LRGDef}
  L = 
  \begin{pmatrix}
    \partial_x^2 + a \id  & -\id \\
     \id                  & -a \id
   \end{pmatrix},
   \qquad
   R(u,v,\mu,\eps) = o(\norm{u}),
   \qquad
   G(u,v,\mu,\eps) = \eps,
\end{equation}
and $R$ is the remainder, linear in $v$ and $\mu$, and independent of $\eps$.


System~\cref{eq:slowFastInfRep} has a homogeneous equilibrium $u=0$ at $(v,\mu,\eps)
= (0,0,0)$ and, as we shall see below, \cref{hyp:FNHopf} ensures that $u=0$,
considered as an equilibrium of the fast subsystem, undergoes a Hopf bifurcation for
these parameter values. We now proceed to check the hypotheses of
\cref{thm:centreManifoldWithParameters}.

\subsubsection{Choice of function spaces}
We begin by selecting function spaces for the problem. In particular, we set
\[
  \cZ = H^2_\textrm{per}(0,2\pi) \times L^2_\textrm{per}(0,2\pi),
  \qquad
  \cY = \cX = L^2(0,2\pi) \times L^2(0,2\pi).
\]
where $L^2(0,2\pi)$ and $H_\textrm{per}^2(0,2\pi)$ are standard Sobolev spaces
endowed with the inner-product norms $\Vert \blank \Vert_{L^2}$ and $\Vert \blank
\Vert _{H^2}$, respectively. We endow $X$ and $Z$ with norms
\[
  \Vert (u_1,u_2) \Vert_{Z} = \big( \Vert u_1 \Vert^2_{H^2} + \Vert u_2 \Vert^2_{L^2}
  \big)^{1/2},
  \qquad
  \Vert (u_1,u_2) \Vert_{X} = \big( \Vert u_1 \Vert^2_{L^2} + \Vert u_2 \Vert^2_{L^2}
  \big)^{1/2}.
\]
However, since $u_1, u_2$ are $2\pi$-periodic functions defined on $(0,2\pi)$, it
will be convenient to write them in terms of their Fourier Series
\[
  u_i(x) = \sum_{n\in \ZSet} u_{i,n} \psi_n(x), 
  \quad i = 1,2, \qquad \psi_n(x) = \frac{1}{\sqrt{2
  \pi}} e^{inx}, \quad n \in \ZSet.
\]
and use for $\cX$, $\cZ$ norms which expose the respective Fourier
coefficients~\cite{Atkinson:2005hy}
\[
  \Vert (u_1,u_2) \Vert_{\cX} = \sum_{n \in \ZSet} \Vert (u_{1,n},u_{2,n}) \Vert_2^2
  , 
  \qquad
  \Vert (u_1,u_2) \Vert_{\cZ} \sim \sum_{n \in \ZSet} n^4 |u_{1,n}|^2 + |u_{2,n}| ^2.
\]
In passing we note that the latter norm is not equal, but only equivalent, to $\Vert
\blank \Vert_{\cZ}$.

\subsubsection{Checking \cref{hyp:vectField}} To check \cref{hyp:vectField}(i) we
prove that $L$ is a continuous linear
operator from $Z$ to $X$. We fix $u \in \cZ$, let $f=Lu$, and obtain the following
bound using Cauchy-Schwarz inequality:
\[
  \begin{aligned}
  \Vert f_1 \Vert^2_{L^2} 
          & \leq 
	  \big( 
	    \Vert \partial_x^2 u_1 \Vert_{L^2} 
	     + a \Vert u_1 \Vert_{L^2} + \Vert u_2 \Vert_{L^2} 
	   \big)^2 \\
	   & \leq (2+a^2) 
	   \big(
	    \Vert \partial_x^2 u_1 \Vert^2_{L^2} 
	    + \Vert u_1 \Vert^2_{L^2} + \Vert u_2 \Vert^2_{L^2} \big) 
	  := K^2_1 \Vert u \Vert_\cZ^2.
  \end{aligned}
\]
Similarly we find $\Vert f_2 \Vert^2_{L^2} \leq K_2^2 \Vert u
\Vert^2_\cZ$, where $K_2^2 = 1+a^2$. Combining the bounds we obtain
$\Vert Lu \Vert_\cX \leq K \Vert u \Vert_\cZ$, where $K = \sqrt{ \max(K_1,K_2) }$.
This proves $L \in \mathcal{L}(Z,X)$.

Further, $R$ is a cubic polynomial in $X$ with $R(0,0,0,0) = 0$ and $D_uR(0,0,0,0) =
0$. Therefore, owing to the fact that $Z$ is a Banach algebra,
\cref{hyp:vectField}(ii) holds for any integer $k$.

\subsubsection{Checking \cref{hyp:spectralDecomposition}} To check this hypothesis,
we determine the resolvent set of $L$, as in the following statement.
\begin{proposition} \label{prop:FNResolvent}
  Let $D_n(\lambda) = \lambda^2 + n^2 \lambda + a(n^2-a) +1$, for $\lambda \in
  \CSet$ and $n \in \ZSet$. The resolvent set of the operator $
  L \colon Z \to X$ defined in \cref{eq:LRGDef} is given by
  \[
    \rho(L) = \{ \lambda \in \CSet \colon \lambda \neq -a, D_n(\lambda) \neq 0 \textrm{
    for all $n \in \ZSet$} \}.
  \]
\end{proposition}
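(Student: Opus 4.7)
The plan is to compute the resolvent $(\lambda \id - L)^{-1}$ explicitly by diagonalizing $L$ in the Fourier basis $\{\psi_n\}_{n\in\ZSet}$, which is possible because the Laplacian is diagonal in this basis and the rest of $L$ acts pointwise on the two components. Writing $u = (u_1,u_2) \in \cZ$ and $f = (f_1,f_2) \in \cX$ with Fourier coefficients $u_{i,n}$, $f_{i,n}$, the equation $(\lambda \id - L) u = f$ becomes the pair
\begin{equation*}
  (\lambda + n^2 - a)\, u_{1,n} + u_{2,n} = f_{1,n},
  \qquad
  -u_{1,n} + (\lambda + a) u_{2,n} = f_{2,n},
  \qquad n\in\ZSet.
\end{equation*}
The determinant of this $2\times 2$ system is precisely $D_n(\lambda)$, so one direction of the claim (that $\lambda\in\rho(L)$ when $\lambda\neq -a$ and $D_n(\lambda)\neq 0$ for every $n$) will follow once I show that the explicit formula
\begin{equation*}
  u_{1,n} = \frac{(\lambda+a) f_{1,n} - f_{2,n}}{D_n(\lambda)},
  \qquad
  u_{2,n} = \frac{u_{1,n} + f_{2,n}}{\lambda + a}
\end{equation*}
defines an element of $\cZ$ for every $f \in \cX$ and that the resulting map is in $\mathcal L(\cX)$.

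First I will carry out the forward direction. Fix $\lambda \neq -a$ with $D_n(\lambda)\neq 0$ for all $n$, and use the factored expression $D_n(\lambda) = n^2(\lambda + a) + \lambda^2 - a^2 + 1$ to extract the large-$n$ asymptotics $|D_n(\lambda)| \sim n^2 |\lambda+a|$. This asymptotic, together with the fact that $D_n(\lambda)\neq 0$ for every $n$, yields a constant $C(\lambda) > 0$ with $n^2/|D_n(\lambda)| \leq C(\lambda)$ and $1/|D_n(\lambda)| \leq C(\lambda)$ for all $n \in \ZSet$. Plugging these bounds into the explicit formula above gives $\sum_n n^4|u_{1,n}|^2 \lesssim \|f\|_\cX^2$ and $\sum_n |u_{2,n}|^2 \lesssim \|f\|_\cX^2$, which proves both $u \in \cZ$ and $\|(\lambda\id-L)^{-1}\|_{\mathcal L(\cX)} \leq C'(\lambda)$. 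A direct substitution confirms that this $u$ solves the resolvent equation, and uniqueness follows from the fact that the $2\times 2$ block at each Fourier mode is invertible.

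For the converse, I will exhibit an obstruction to invertibility in each excluded case. If $\lambda = -a$, the second component of the resolvent equation collapses to $-u_1 = f_2$, which forces $f_2 \in H^2_{\mathrm{per}}$; since $f_2$ ranges over all of $L^2_{\mathrm{per}}$, the range of $-a\,\id - L$ is a proper dense subset of $\cX$, so $-a \in \spec(L)$. If instead $\lambda \neq -a$ but $D_{n_0}(\lambda) = 0$ for some $n_0$, then the homogeneous mode-$n_0$ block has a one-dimensional kernel spanned by a vector $(\alpha, \beta) \in \CSet^2$; the function $u = (\alpha \psi_{n_0}, \beta \psi_{n_0})$ lies in $\cZ$ and satisfies $(\lambda\id - L) u = 0$, so $\lambda$ is an eigenvalue and therefore in $\spec(L)$.

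The main obstacle is the quantitative control needed in the forward direction: I must ensure that $1/D_n(\lambda)$ does not blow up along the integer sequence. Since $D_n$ is a polynomial of degree two in $\lambda$ with coefficients depending on $n$, this is not automatic from $D_n(\lambda)\neq 0$ for each $n$, but it follows from the fact that $D_n(\lambda)$ grows like $n^2$ for large $|n|$, so only finitely many indices can make $|D_n(\lambda)|$ small, and each of these is nonzero by hypothesis. This uniform bound is exactly what is needed to conclude $(\lambda\id-L)^{-1} \in \mathcal L(\cX)$ and close the proof.
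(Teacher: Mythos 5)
Your proof is correct and follows essentially the same approach as the paper: both reduce the resolvent equation to the $2\times 2$ system at each Fourier mode with determinant $D_n(\lambda)$, both use the asymptotic $|D_n(\lambda)|\sim n^2$ (for large $|n|$, given $\lambda\neq -a$) together with $D_n(\lambda)\neq 0$ at each $n$ to obtain a uniform bound and conclude $(\lambda\id-L)^{-1}\in\mathcal L(\cX,\cZ)$, and both treat the excluded cases $\lambda=-a$ and $D_{n_0}(\lambda)=0$ separately. Your converse is in fact slightly crisper than the paper's: when $D_{n_0}(\lambda)=0$ you exhibit an explicit eigenvector $(\alpha\psi_{n_0},\beta\psi_{n_0})\in\cZ$, whereas the paper only remarks that the mode-$n_0$ system is unsolvable.
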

\begin{proof} 
Let $E= \{ \lambda \in \CSet \colon \lambda \neq -a, D_n(\lambda) \neq 0
  \textrm{ for all $n \in \ZSet$} \}$. We show that $E \subseteq \rho(L)$ and $E
  \supseteq \rho(L)$, hence $\rho(L) = E$.

  \emph{Step 1: $E \subseteq \rho(L)$}. We prove that $\lambda \neq -a$ and
  $D_n(\lambda) \neq 0$ for all $n \in \ZSet$ imply $\lambda \in \rho(L)$, that is,
  for all $f \in \cX$,
  there exist a constant $K(\lambda)>0$ and a unique $u \in \cZ$ satisfying $(\lambda-L)u
  =f$ and $\Vert u \Vert_\cZ \leq K(\lambda) \Vert f \Vert_\cX$, hence $(\lambda 
  -L)^{-1} \in \mathcal{L}(X,Z)$. We fix $f \in \cX$, and we note that $(\lambda
  - L)u = f$ if, and only if,
  \begin{equation}\label{eq:FourierLinSys}
    \begin{pmatrix}
      \lambda + n^2 -a &  1           \\
                    -1 &  \lambda + a
    \end{pmatrix}
    \begin{pmatrix}
      u_{1,n} \\
      u_{2,n}
    \end{pmatrix}
    =
    \begin{pmatrix}
      f_{1,n} \\
      f_{2,n} 
    \end{pmatrix}
    \qquad
    \textrm{for all $n \in \ZSet$}.
  \end{equation}
  Since $D_n(\lambda) \neq 0$ is the determinant of the matrix above, the system has a
  unique solution for all $n \in \ZSet$. We now proceed to show that $u = \sum_{n \in
  \ZSet}
  u_n \psi_n(x) \in Z$. From the system above we get
  \begin{equation}\label{eq:interm}
  \begin{aligned}
    D_n(\lambda) u_{1,n} & = (\lambda + a) f_{1,n} - f_{2,n} \,,\\
    (\lambda + a) u_{2,n} & = ( u_{1,n} + f_{2,n})\,.
  \end{aligned}
  \end{equation}
  The first equation leads to the bound
  $
  |D_n(\lambda) u_{1,n} |^2 \leq (1 + |\lambda + a|^2) \Vert f_n \Vert_2^2.
  $
  We note that $D_n(\lambda) \sim n^2$ as $n \to \infty$: there exist an integer
  $n_0(\lambda)$ and a positive real constant $C(\lambda)$ such that $C(\lambda)
  |D_n(\lambda)| \geq n^2$ for all $n > n_0(\lambda)$, from which we
  deduce \[
    n^4 |u_{1,n}|^2 \leq C(\lambda)^{-2}
        (1 + |\lambda + a|^2) \Vert f_n \Vert_2^2 =
    K_1(\lambda) \Vert f_n \Vert^2_2, \qquad \textrm{for all $n > n_0(\lambda)$}.
  \]
  Since $\lambda \neq -a$, the second equation in \cref{eq:interm} gives the following
  bound
  \[
    \begin{aligned}
      |u_{2,n}|^2 
              & \leq 2|\lambda + a|^{-2} \big( |u_{1,n}|^2 + |f_{2,n}|^2 \big) & \\
	      & \leq 2|\lambda + a|^{-2} \big( K_1(\lambda) + 1 \big)
	                                                   \Vert f_n \Vert^2_2 & \\
	      & = K_2(\lambda) \Vert f_n \Vert^2_2 & \textrm{for all $n > n_0(\lambda)$}.
    \end{aligned}
  \]
  therefore
  $n^4 |u_{1,n}|^2 + |u_{1,n}|^2 
      \leq \max\big(K_1(\lambda),K_2(\lambda)\big) \Vert f_n \Vert^2_2$ 
  for all $n > N_1(\lambda)$ and owing to $f \in \cX$, we conclude that there exists
  $K(\lambda)$ such that $\Vert u \Vert_{\cZ} \leq K(\lambda) \Vert f \Vert_{\cX}$.
  For all $n$, $u_n$ is the unique solution to \cref{eq:FourierLinSys}, hence the
  uniqueness of the Fourier series implies uniqueness of $u$.

  \emph{Step 2: $E \supseteq \rho(L)$}. Equivalently, we prove $\CSet \setminus
  E \subseteq  \CSet \setminus \rho(L) = \sigma(L)$. If there exist $\lambda \in
  \CSet$ and $n \in \ZSet$ such that $D_n(\lambda) = 0$, then \cref{eq:FourierLinSys}
  does not have a solution, hence $\lambda \in \sigma(L)$. If $\lambda=-a$, then
  $(\lambda - L)u = f$ does not have a solution in $\cZ$ for all $f \in \cX$,
  because $\lambda = -a$ implies $u_1 = -f_2 \in L^2(0,2\pi)$, hence $u_1 \not\in
  H^2_\textrm{per}(0,2\pi)$ in general. We therefore have $-a \in \sigma(L)$.
\end{proof}

\Cref{prop:FNResolvent} and \cref{hyp:FNHopf} imply
\[
  \sigma(L) = 
  \Big\{\pm \iunit \sqrt{1-a^2},-a \Big\} 
  \cup
  \bigg\{ 
    \frac{-n^2 + \sqrt{(n^2-2a)^2 -4}}{2} \colon n \in \ZSet_{\neq 0}
  \bigg\}
\]
therefore the homogeneous equilibrium $u \equiv 0 \in X$ of the fast sybsystem of
\cref{eq:slowFastInfRep} undergoes a Hopf bifurcation at $(v,b,c) = (0,0,0)$, with
Hopf frequency $\omega_* = \sqrt{1-a^2} = \sqrt{1-(b^*)^2}$. It can be shown (we omit
this calculation for brevity)
\[
  \real \sigma_s(L) \subset (-\infty,-a/2), \qquad \sigma_c(L) = \{\pm \iunit \omega_*\},
  \qquad \sigma_u(L) = \{ \varnothing \},
\]
therefore \cref{hyp:spectralDecomposition} holds with $n=2$ and $\gamma = a/2$.
%

\subsubsection{Checking \cref{hyp:linearEquation}}
\label{subsec:checkResolventHypHopf}
Checking this hypothesis directly may
be challenging in applications. However, in a wide class of problems, this hypothesis
can be replaced by some other conditions, which are easier to check. In particular,
since $\cX = \cY$ and $\cX, \cY, \cZ$ are Hilbert spaces, it suffices to prove the following
condition~\cite[Section 2.2.3, Theorem 2.20]{Haragus2010}.

\begin{proposition}\label{prop:boundResolventHopf}
  Let $L \in \mathcal{L}(\cZ,\cX)$ be the linear operator defined in
  \cref{eq:LRGDef}. There exist $\omega_0, K_0 >0$ such that, for all $\omega \in \RSet$
  with $|\omega| > \omega_0$, $\iunit \omega \in \rho(L)$ and
  \begin{equation}\label{eq:FNLinHyp}
    \Vert (\iunit \omega - L)^{-1} \Vert_{\mathcal{L}(\cX,\cX)}
    \leq
    K_0/|\omega|
  \end{equation}
\end{proposition}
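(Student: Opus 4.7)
The plan is to combine the two preparatory results (\cref{lemma:resolvent-laplacian,lemma:bounded-pert}) by splitting $L$ into an unbounded but easily invertible diagonal part plus a bounded perturbation. Concretely, I would write $L = A + B$ with
\[
A = \begin{pmatrix} \partial_x^2 & 0 \\ 0 & 0 \end{pmatrix},
\qquad
B = \begin{pmatrix} a\id & -\id \\ \id & -a\id \end{pmatrix},
\]
noting that $A \in \mathcal{L}(\cZ,\cX)$ and that $B \in \mathcal{L}(\cX)$ is a constant-coefficient matrix multiplier, hence bounded on $\cX = L^2(0,2\pi) \times L^2(0,2\pi)$ with an easily computable norm.

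Next, I would analyse the resolvent of $A$ for $\lambda = \iunit\omega$, $\omega \in \RSet \setminus \{0\}$. Since $A$ is block-diagonal, $(\iunit\omega - A)^{-1}$ splits as the pair $((\iunit\omega - \partial_x^2)^{-1}, (\iunit\omega)^{-1}\id)$. \Cref{lemma:resolvent-laplacian} applies to the first block (the statement was given on a symmetric interval but the derivation is identical on $(0,2\pi)$ with periodic boundary conditions, since the integration-by-parts argument is insensitive to the shift) and gives $\|(\iunit\omega - \partial_x^2)^{-1}\|_{\mathcal{L}(L^2)} \leq 1/|\omega|$. The second block trivially satisfies $\|(\iunit\omega)^{-1}\id\|_{\mathcal{L}(L^2)} = 1/|\omega|$. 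Taking the maximum of the two block norms (for the standard block-diagonal operator on a product Hilbert space) yields $\iunit\omega \in \rho(A)$ for every $\omega \neq 0$, together with the bound
\[
\|(\iunit\omega - A)^{-1}\|_{\mathcal{L}(\cX)} \leq \frac{1}{|\omega|}.
\]

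Finally, I would invoke \cref{lemma:bounded-pert} with $K=1$ and any $\omega_0 > 0$: it provides a constant $C > 0$ such that, for all $|\omega| > \max(\omega_0, \|B\|_{\mathcal{L}(\cX)})$, the point $\iunit\omega$ lies in $\rho(A+B) = \rho(L)$ and the required estimate $\|(\iunit\omega - L)^{-1}\|_{\mathcal{L}(\cX)} \leq C/|\omega|$ holds. I expect no significant obstacle: the only mild subtlety is checking that \cref{lemma:resolvent-laplacian} transfers from $(-l,l)$ to $(0,2\pi)$ with periodic boundary conditions, which is immediate from the same integration by parts since the boundary contributions vanish in both cases. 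The splitting strategy also has the advantage of being robust, so that an analogous verification of \cref{hyp:linearEquation} can be performed with minor modifications for the Turing and other applications considered later.
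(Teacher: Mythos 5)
Your proof is correct, but it takes a genuinely different (and arguably cleaner) route than the paper's. The paper proves \cref{prop:boundResolventHopf} by explicitly solving $(\iunit\omega - L)u = f$: it eliminates $u_2$ to reduce to a single scalar equation in $u_1$ with a shifted complex parameter $z = \iunit\omega - a + (\iunit\omega+a)^{-1}$, applies \cref{lemma:resolvent-laplacian} with that $z$ (which requires computing $|\imag z|$ and checking it grows like $|\omega|$ for large $|\omega|$), and then bounds $u_2$ from the algebraic relation. That direct argument never invokes \cref{lemma:bounded-pert}. Your approach, by contrast, is modular: you split $L = A + B$ with $A$ the block-diagonal operator containing the Laplacian and a zero block, and $B$ the constant-coefficient matrix; you get $\|(\iunit\omega-A)^{-1}\|_{\mathcal L(\cX)} \le 1/|\omega|$ blockwise from \cref{lemma:resolvent-laplacian} and the trivial bound on the scalar block, and then conclude by \cref{lemma:bounded-pert}. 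Both arguments are sound, and your observation about the transfer of \cref{lemma:resolvent-laplacian} from $(-l,l)$ to $(0,2\pi)$ with periodic boundary conditions is correct (the boundary terms in the integration by parts cancel in either case). Interestingly, the splitting-plus-\cref{lemma:bounded-pert} strategy is exactly the one the paper does use later for the nonlocal reaction--diffusion model (\cref{sec:slowPassageTuringAnalysis}, where $L = d_*\partial_x^2 - u_*W$), so your proof brings the FitzHugh--Nagumo verification in line with the treatment of that example and exhibits the two preparatory lemmas as the uniform mechanism; the paper's direct approach for the FitzHugh--Nagumo case has the minor advantage of not requiring the bound on $\|B\|_{\mathcal L(\cX)}$ and of tracing the role of each scalar equation, but at the cost of a slightly more delicate estimate on $|\imag z|$.
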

\begin{proof}
  We note that $\iunit \omega \in \sigma(L)$ if, and only if, 
    $
    | \omega | = \sqrt{ 4 - (n^2-2a)^2}/2 < 1,
    $
  hence $|\omega| > 1$ implies $\iunit \omega \in \rho(L)$. Henceforth we set
  $\omega_1=1$ for notational convenience. We now fix $\omega$
  with $| \omega | > \omega_1$, $f \in \cX$, and prove that there exist
  $K_0,\omega_0$ such that
  \begin{equation}\label{eq:intermediateBound2}
    \Vert (\iunit \omega - L)^{-1} f \Vert_{\cX} \leq 
    K_0 \Vert f \Vert_{\cX} / |\omega|
  \end{equation}
  which implies $\cref{eq:FNLinHyp}$. Since $\iunit \omega \in \rho(L)$, there exists
  a unique $u \in \cZ$ such that $u= (i\omega -L)^{-1} f$, hence
  \begin{equation}\label{eq:FNHopfRes}
      -\partial_x^2 u_1 +
    \bigg(
      \iunit \omega - a + \frac{1}{\iunit \omega + b} 
    \bigg) u_1 =
    f_1 - \frac{f_2}{\iunit \omega + b},
    \qquad
    u_2 = \frac{u_1 + f_2}{\iunit \omega + b}.
  \end{equation}
  We use \cref{lemma:resolvent-laplacian} to bound $u_1$: the operator $\partial_x^2$
  in \cref{eq:FNHopfRes} maps from $\colon H^2_\textrm{per}( (0,2\pi),\CSet)$ to
$L^2( (0,2\pi),\CSet)$, and
  \[
    \bigg( \iunit \omega - a + \frac{1}{\iunit \omega + b} \bigg) \in
    \rho(\partial_x^2),
    \qquad
    f_1 - \frac{f_2}{\iunit \omega + b} \in L^2((0,2\pi),\CSet),
  \]
  therefore we obtain
  \[
    |\omega| 
    \bigg(
    1 - \frac{1}{\omega_1^2 + b^2}  
    \bigg)
    \Vert u_1 \Vert_{L^2} 
    \leq
    \bigg\Vert f_1 - \frac{f_2}{\iunit \omega + b} \bigg\Vert_{L^2} 
    \leq
    \bigg(
      1 + \frac{1}{\sqrt{\omega_1^2 + b^2}} 
    \bigg)
    \Vert f \Vert_X 
  \]
  We conclude that there exist constants $K_2,\omega_2=1+\omega_1>0$ such that for all
  $\omega \in \RSet$ with $|\omega| > \max(\omega_1,\omega_2)$
  \[
    \Vert u_1 \Vert_{L^2} \leq \frac{K_2}{|\omega|} \Vert f \Vert_\cX.
  \]
  Taking the norm of the second equation in \cref{eq:FNHopfRes}, we arrive at the
  following bound, valid for all $\omega$ with $|\omega|>\max(\omega_1,\omega_2)$
  \[
    \begin{aligned}
    \Vert u_2 \Vert_{L^2} 
      & \leq \frac{1}{\sqrt{\omega^2 + b^2}} 
      \bigg( 
	1 + \frac{K_2}{|\omega_2|}
      \bigg) \Vert f \Vert_\cX
      = \frac{1}{|\omega|}  
      \frac{|\omega| + K_2}{\sqrt{\omega^2 + b^2}} \Vert f \Vert_\cX,
    \end{aligned}
  \]
  and we conclude that there exist $K_3,\omega_3 > 0$ such that, for all $\omega \in
  \RSet$ with $|\omega| > \max(\omega_1, \omega_2, \omega_3)$
  \[
    \Vert u_2 \Vert_{L^2} \leq \frac{K_3}{|\omega|} \Vert f \Vert_\cX.
  \]
  Using the bounds found for $\Vert u_1 \Vert_{L^2}$ and $\Vert u_2 \Vert_{L^2}$ we
  find, for all $\omega \in \RSet$ with $|\omega| >
  \max(\omega_1,\omega_2,\omega_3)$,
  \[
    \begin{aligned}
    \Vert (\iunit \omega - L)^{-1} f \Vert_{\cX} 
    & = 
      \big( \Vert u_1 \Vert_{L^2} + \Vert u_1 \Vert_{L^2} \big)^{1/2} \\
      & \leq \frac{\sqrt{K_2^2+K_3^2}}{|\omega|}
      \Vert f \Vert_{\cX}\,.
    \end{aligned}
  \]
  Therefore \cref{eq:intermediateBound2} holds and, hence, \cref{eq:FNHopfRes} hold
  with $\omega_0 = \max(\omega_1,\omega_2,\omega_3)$ and $K_0 = \sqrt{K_2^2+K_3^2}$.
\end{proof}

\subsubsection{System of 2-fast, 1-slow variables}
\label{subsec:FiniteDimResult}
The sections above show that the hypotheses of \cref{lem:HopfNormalForm} hold for the
FitzHugh-Nagumo reaction-diffusion system~\cref{eq:FNRev}. This leads to the
finite-dimensional Hopf normal form~\cref{eq:HopfNormalForm} for which the function
$G$ in the slow equation is the identity. We obtain the system
 \begin{equation}\label{eq:HopfNF}
    \begin{aligned}
      \dot A & = i \omega A + \alpha(B,\mu,\eps)A + \beta A |A|^2 + \delta \eps^2 +
      \textrm{h.o.t.}\\
      \dot B &= \eps
    \end{aligned}
  \end{equation}
  where $A$ and $B$ are one-dimensional complex and real variables, respectively.
One can then apply classical results on slow passage through a Hopf bifurcation to
this sytem. Such results were first unveiled by Shishkova~\cite{S73} and then proved
within a general framework by Neishtadt~\cite{neishtadt85,neishtadt87,neishtadt88};
see also~\cite{RB88,BER89,HE93} for an independent treatment. We therefore have a
theoretical local explanation for the numerical results shown in
\cref{fig:FHNSlowPassageHopf}. We note that it is straightforward to apply the theory
above to any reaction--diffusion system of PDEs, for which we expect to find
generically a slow-passage through Hopf bifurcations. Other types of boundary
conditions can also be easily included, by changing the underlying function spaces.

\subsection{Slow passage through Turing in a nonlocal reaction-diffusion equation}
\label{sec:slowPassageTuringAnalysis}
We have presented numerical results showing a slow-passage through a Turing
bifurcation for a system of reaction diffusion equations, and for a one-component
nonlocal reaction-diffusion equation model. In this section, we present in detail only
the nonlocal case, which is less well-studied. The computations for the other case
follow in a similar way, and we omit them here.

We recall the model under consideration,
\begin{equation}\label{eq:nonlocalRDRep}
  \begin{aligned}
    \partial_t u(x,t) & = d \partial_x^2 u(x,t) + v(t) u(x,t) - u(x,t) \int_\Omega
    w(x - y)
    u(y,t)\,dy,   \\
      \dot v & = \eps
  \end{aligned}
\end{equation}
for $(x,t) \in \Omega \times \RSet_{> 0}$, with $\Omega = \RSet/2l\ZSet$, which
implies periodic boundary conditions 
\[
  u(-l,t) = u(l,t), \qquad t \in \RSet_{> 0}.
\]
This model is obtained from $\cref{eq:nonlocalRD}$ by setting $v \mapsto v -b$, which
slightly simplifies the notation. We make some preliminary assumptions on the kernel
and on the existence of a homogeneous equilibrium.
\begin{hypothesis}\label{hyp:NonlocalDNTuring} 
  The interaction kernel $w \in L^1(\Omega)$ is an even $2l$-periodic function, with
  Fourier coefficients $(w_i)_{i\in \ZSet}$, $w_0 =1$. Further, there exist $u_*$,
  $v_*$, $d_* \in \RSet_{>0}$ and $n_* \in \ZSet_{\neq 0}$ such that $u(x,t) \equiv
  u_*$ is an equilibrium of \cref{eq:nonlocalRDRep} for $(v,d,\eps)=(v_*,d_*,0)$ with
  $u_*=v_*$, $d_* = -v_* w_{n_*} l^2/(n_*\pi)^2$.
\end{hypothesis}
As we shall see below, the requirement on $d_*$ guarantees that $u_*$ undergoes a
Turing-like bifurcation to a mode with wavelength $\pi/(l n_*)$. A change of
variables, similar to the one
used in other examples, leads to the system
\begin{equation}
  \begin{aligned}
    \partial_t u & = (d_* \partial_x^2 u - u_* W) u 
                   + (d \partial_x^2 + v) u + u_*v - uW u\\
      \dot v & = \eps
  \end{aligned}
\end{equation}
where we dropped the tildes and set $(Wu)(x) = \int_\Omega w(x-y) u(y)\,dy$. The
system above is in the form \cref{eq:slowFastInfRep} with

\[
  L = d_* \partial_x^2 - u_* W, \qquad R(u,v,\mu,\eps) = (\mu \partial_x^2 + v) u +
  u_*v - uW u, \quad G(u,v,\eps) = \eps
\]
where $\mu = d$.

\subsubsection{Choice of function spaces} In this problem, we choose $\cZ = 
H^2_\textrm{per}(\Omega)$ and $\cY = \cX = L^2_\textrm{per}(\Omega)$. We note that
$X=Y$ is a consequence of the fact that the linear operator in the original
problem~\cref{eq:nonlocalRDRep} contains a Laplacian and is
parameter dependent.

\subsubsection{Checking \cref{hyp:vectField}} Since $w \in L^1(\Omega)$ by
\cref{hyp:NonlocalDNTuring}, the Young's convolution theorem
\cite[Theorem 4.15]{brezis_functional:2010} gives
\[
  \Vert Wu \Vert_\cX \leq \Vert w \Vert_{L^1} \Vert u \Vert_\cX, 
	  \quad \textrm{for all $u \in \cX$},
	  \qquad
  \Vert Wu \Vert_\cZ \leq \Vert w \Vert_{L^1} \Vert u \Vert_\cZ,
          \quad \textrm{for all $u \in \cZ$}.
\]
We conclude that, since $\partial_x^2$ and $W$ are continuous operators from
$\cZ$ to $\cX$, then their linear combination $L$ is in $\mathcal{L}(\cZ,\cX)$. As
for the nonlinear function $R$, we note that the mapping $u \mapsto u Wu$, seen as a
nonlinear operator from $\cZ$ to $\cZ$, is the composition of $W$ and a product in
the Banach algebra $\cZ$. Since $W \in \mathcal{L}(\cZ)$, we conclude that $R \in
C^k(\cZ \times \RSet \times \RSet \times \RSet,\cX)$ for any integer $k$.

\subsubsection{Checking \cref{hyp:spectralDecomposition}}
We now study the spectrum of $L$ which is a closed operator in $\cX$ with domain
$\cZ$. Since $\cZ$ is compactly embedded in $\cX$ by the Sobolev embedding theorem (see
\cite{brezis_functional:2010}, Theorem~8.8), it follows that $L$ has a compact
resolvent. Thus, the spectrum of $L$ consists of at most a countable sequence of
isolated eigenvalues with finite algebraic multiplicity (see \cite[Theorem
III.6.29]{kato_perturbation:2005}) which can accumulate at $0$ or $\pm \infty$. For
the problem under consideration we can directly compute this sequence, and obtain
\[
  \sigma(L) = \{ -d_* (n \pi/l)^2 - u_* w_n \colon n \in \ZSet \}.
\]
Since $w \in L^1(\Omega)$, the Riemann--Lebesgue Lemma \cite[Chapter
6]{Serov:2017fz} implies $w_n \to 0$ as $n \to \infty$, hence
the spectrum accumulates at infinity. \cref{hyp:NonlocalDNTuring} guarantees that
\[
  \sigma_c(L) = \{0\} , \qquad \sigma_u(L) \cup \sigma_s(L) = 
  \{ -d_* (n \pi/l)^2 - u_* w_n \colon n \in \ZSet_{\neq n_*} \},
\]
and we have a spectral gap for $L$.

\subsubsection{Checking \cref{hyp:linearEquation}} Since $Y=X$, we proceed as in
  \cref{subsec:checkResolventHypHopf}, and we prove a result analogous to
  \cref{prop:boundResolventHopf}. We write $L = L_1 + L_2$, where $L_1 = d_* \partial_x^2$
  and $L_2 = - u_* W$. We note that $\sigma(L_1)$ is purely real, hence $i\omega \in
  \rho(L_1)$. We apply in sequence \cref{lemma:resolvent-laplacian,lemma:bounded-pert},
  and conclude that there exist positive constants $\omega_0, K_0$ such that
  $\Vert (\iunit \omega - L)^{-1} \Vert_{\mathcal{L}(\cX)} \leq K_0/|\omega|$
  for all $\omega \in \RSet$ with $|\omega| > \omega_0$, hence
  \cref{hyp:linearEquation} holds.

\subsubsection{Reduction to $1$-slow, $1$-fast system} 
We note that \cref{hyp:O2Equivariance} holds with $T_\phi$ and $S$ representing
translations by $\phi \in \RSet/2l\ZSet$ and reflections about the $x=0$ axis,
respectively, that is
\[
  (T_\phi u)(x) = u(x - \phi), \qquad (S u)(x) = u(-x), \qquad x,\phi \in
  \RSet/2l\ZSet.
\]
We can then apply \cref{lem:TuringNormalForm} and obtain, in a neighbourhood of the
origin, a $1$-slow, $1$-fast system 
\begin{equation} \label{eq:tempPitch}
  \begin{aligned}
  \dot A & = \alpha(B,\mu,\eps)A + \beta A |A|^2 + O\big( (|B| + |\mu| + \eps + |A|^2)^2 \big), \\
  \dot B &= \eps,
  \end{aligned}
\end{equation}
where $\alpha, \beta \in \RSet$.
The reduction above leads to studying an ODE with $1$ fast and $m$ slow variables.
We have not computed the coefficients $\alpha$ and $\beta$ explicitly, but the
numerical bifurcation analysis performed in \cref{sec:numericsNonlocalTuring}
indicates a pitchfork bifurcation for $\eps=0$ (a Turing bifurcation of the nonlocal
model). The results
in~\cite{KS01c} apply to \cref{eq:tempPitch} in this case: when $\eps \neq 0$ we
expect a slow-passage through the pitchfork bifurcation; this confirms what has been
found numerically in \cref{fig:nolocalRDSlowPassage}, namely the dynamics stays close
to the branch of homogeneous steady states, past the Turing bifurcation. Similar
considerations apply to the sub- and super-critical Turing bifurcations found in the
local model presented in \cref{sec:numericsSchnakenberg}.
%


\subsection{Spatio-temporal canards in a neural field model} 
\label{sec:slowPassageFoldAnalysis}
In this section we study
centre-manifold reductions for the nonlocal neural field problem \cref{eq:NF} where
$\Omega$ is a compact subset of $\RSet^d$. 
We introduce the integral operators
\[
  W \colon u \mapsto \int_\Omega w(\blank,y) u(y) \, d\rho(y),  
  \qquad
  Q \colon (u,v) \mapsto \int_\Omega \theta(u,v)(x) \, d\rho(x), 
\]
where the integrals are over $\Omega$, and the Nemytskii operator
$
  N \colon (u,v) \mapsto \theta(u(x),v),
$
and rewrite the original model as
\[
  \begin{aligned}
    \dot u             & = - u + WN(u,v_1), \\
    \dot v_1 & = \eps \big(  v_2 + c Q(u,v_1) \big), \\
    \dot v_2 & = \eps \big(  -v_1 + a + b Q(u,v_1) \big).
  \end{aligned}
\]
We make the following preliminary assumption:
 \begin{hypothesis}\label{hyp:NFSaddleNode}
  The firing rate function $\theta$ is in $C^k_b(\RSet^2)$, the space of bounded
  functions
  defined on $\RSet^2$ with continuous bounded derivative up to order $k$, for some
  $k \geq 2$. The synaptic kernel $w$ is in $C(\Omega^2)$. There exists
  $(u_*,v_{*1})
  \in C(\Omega) \times \RSet$ such that $u_* = WN(u_*,v_{*1})$. In addition, $1$ is a
  simple eigenvalue of $WD_uN(u_*,v_{*1})$, where 
  $D_u N(u_*,v_{*1}): u \mapsto \partial_u \theta(u_*,v_{*1}) u$ is regarded as an
  operator from $C(\Omega)$ to itself.
\end{hypothesis}

The setting of this example is different from the previous ones. Firstly, we note
that in this case $u_*$ is not a homogeneous equilibrium of the model, and indeed
canards were found in \cite{avitabile2017spatiotemporal} close to saddle-node
bifurcations of heterogeneous equilibria. Secondly, in this problem, we aim to
provide an example of evolution equation on a Banach space as opposed to a Hilbert
space (which is why we demand $u_* \in C(\Omega)$). This functional setting is more
challenging, because we can not use resolvent estimates as in
\cref{prop:FNResolvent,prop:boundResolventHopf}, and instead
\cref{hyp:linearEquation} must be checked directly. A Hilbert-space setting is
however possible, and relies on results in \cite{mielke1988,Haragus2010}.

After setting $u = u_* + \tilde u$, $v_1 = v_{*1} + \tilde v_1$, and dropping tildes,
we obtain
\begin{equation}\label{eq:NFBanach}
  \begin{aligned}
    \dot u             & = - u + WD_uN(u_*,v_{*1})u + R(u,v_1)\\
  \dot v_1 & = \eps \big(  v_2 + c Q(u_* + u ,v_{*1}+ v_1) \big) \\
  \dot v_2 & = \eps \big(  - v_{*1} -v_1  + a + b Q(u_* + u ,v_{*1}+ v_1) \big)
  \end{aligned}
\end{equation}
which fits in our framework with $\mu = (a,b,c)$,
\begin{equation}\label{eq:LOpNF}
    L\colon u \mapsto -u + W D_u N(u_*,v_{1*}) u,
\end{equation}
and nonlinear terms $R,G$ given by
\[
  \begin{aligned}
   R(u,v_1) &= v_1 \int_\Omega w(x,y) \partial_{v_1} 
  \theta (u_*(y),v_{*1}) \, dy + o( |u| + |v_1| ) \\
   G(u,v,\mu) & = 
  \big(  v_2 + \mu_3 Q(u_* + u ,v_{*1}+ v_1) ,
    - v_{*1} -v_1  + \mu_1 + \mu_2 Q(u_* + u ,v_{*1}+ v_1) \big)
  \end{aligned}
\]

\subsubsection{Function spaces and \cref{hyp:vectField}} For this example we choose
$Z = Y = X = C(\Omega)$. For all $u \in Z$ we have, owing to \cref{hyp:NFSaddleNode},
  \[
    \Vert Lu \Vert_\cX = \Vert Lu \Vert_\infty 
    \leq 
    \big(
      1 + \meas(\Omega) \Vert w \Vert_\infty \Vert \partial_u \theta \Vert_\infty 
    \big)
    \Vert u \Vert_\infty := K \Vert u \Vert_\cZ,
  \]
  where the infinity norms are on $C(\Omega)$ or $C(\Omega^2)$, as appropriate.
  Therefore $L \in \mathcal{L}(Z,X)$. The nonlinear terms $R$ satisfy
  \cref{hyp:vectField}.

\subsubsection{Checking \cref{hyp:spectralDecomposition}} 
\label{sec:hypSpecNF}
We first recall the following result:
\begin{lemma}
  If \cref{hyp:NFSaddleNode} holds the operator $WD_uN\in\mathcal L(\cX)$ is compact.	
\end{lemma}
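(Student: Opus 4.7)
The plan is to establish compactness of $WD_uN$ by writing it as the composition of a compact operator $W$ with the bounded multiplication operator $D_uN$, and then invoking the standard fact that compact-times-bounded is compact.

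First I would show that $D_uN \in \mathcal{L}(\cX)$. Since \cref{hyp:NFSaddleNode} gives $\theta \in C^k_b(\RSet^2)$ with $k \geq 2$ and $u_* \in C(\Omega)$, the function $x \mapsto \partial_u \theta(u_*(x), v_{*1})$ is continuous and bounded on the compact set $\Omega$. Hence $D_uN$ is simply multiplication by a bounded continuous function, which maps $C(\Omega)$ to itself and satisfies $\Vert D_uN u \Vert_\infty \leq \Vert \partial_u \theta \Vert_\infty \Vert u \Vert_\infty$.

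Next, the main step: showing that $W \in \mathcal{L}(\cX)$ is compact via the Arzel\`a--Ascoli theorem. Let $B \subset \cX$ be the closed unit ball. I would verify two properties of $W(B)$:
\begin{itemize}
\item \emph{Uniform boundedness}: For $u \in B$, $\Vert Wu \Vert_\infty \leq \meas(\Omega) \Vert w \Vert_\infty$, where $\Vert w \Vert_\infty$ is finite because $w \in C(\Omega^2)$ with $\Omega^2$ compact.
\item \emph{Equicontinuity}: For $x_1, x_2 \in \Omega$ and $u \in B$,
\[
  |(Wu)(x_1) - (Wu)(x_2)| \leq \meas(\Omega) \sup_{y \in \Omega} |w(x_1,y) - w(x_2,y)|,
\]
and uniform continuity of $w$ on the compact set $\Omega^2$ implies this right-hand side tends to $0$ as $|x_1 - x_2| \to 0$, uniformly in $u \in B$.
\end{itemize}
By Arzel\`a--Ascoli, $W(B)$ is relatively compact in $C(\Omega)$, so $W$ is a compact operator.

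Finally, since the composition of a compact operator with a bounded operator is compact (in either order), $WD_uN \in \mathcal{L}(\cX)$ is compact. The only step requiring any real care is the equicontinuity estimate, which hinges on the uniform continuity of $w$ on the compact product $\Omega^2$; no other step poses a genuine obstacle.
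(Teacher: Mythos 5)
Your proof is correct but takes a genuinely different route from the paper's. The paper proves compactness of $W$ by invoking the Weierstrass approximation theorem on the compact set $\bar\Omega^2$ to produce a sequence of polynomial kernels $w_k \to w$ in $C(\bar\Omega^2)$; the associated integral operators $W_k$ then have finite-dimensional range (hence are compact), and $W$ is recovered as a norm limit of these, using the fact that the compact operators form a closed subspace of $\mathcal{L}(\cX)$. You instead prove compactness directly via Arzel\`a--Ascoli, verifying uniform boundedness and equicontinuity of the image of the unit ball, with equicontinuity resting on uniform continuity of $w$ on the compact product $\Omega^2$. Both are standard textbook arguments for compactness of integral operators with continuous kernel on $C(\Omega)$, and both rely on the same underlying hypothesis ($w \in C(\Omega^2)$ with $\Omega$ compact). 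Your route is somewhat more self-contained and elementary, giving a concrete picture of why the image set is precompact; the paper's route is shorter on the page and emphasises the structural fact that $W$ is a limit of finite-rank operators, which is sometimes a more convenient viewpoint for later spectral arguments. Both handle the factor $D_uN$ the same way, as a bounded multiplication operator, and both close with compact-times-bounded-is-compact.
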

\begin{proof}
As $\bar\Omega$ is compact, the Weierstrass theorem provides a sequence of polynomials $(w_k)_k$
which tends to $w$ in $C(\bar\Omega^2)$, and the same is true for the
associated integral operators $(W_k)_k$ in $\mathcal L(\cX)$. These operators have
finite dimensional range and are thus compact operators. The set of compact
operators is closed in $\mathcal L(X)$ (see \cite{brezis_functional:2010}) hence leading to the conclusion that $W$ is
compact. As $D_uN$ is a continuous operator on $X$, $WD_uN$ is compact.
\end{proof}

It follows that the spectrum $\sigma(WD_uN)$ of $WD_uN$ contains $0$, and
$\sigma(WD_uN)\setminus\{0\}$ consists at most of a countable sequence of isolated
eigenvalues with finite algebraic multiplicity which may accumulate at $0$. Since $L
= -\id_X + WD_u N$ and \cref{hyp:NFSaddleNode} holds,
\cref{hyp:spectralDecomposition}(i) is verified, and there exists a spectral gap
$\gamma>0$ which may be chosen such that \cref{hyp:spectralDecomposition}(ii) is
satisfied. In addition, the structure of the spectrum of $WD_uN$ implies that
$\sigma_u(L)$ is finite and $\cX_u$ is finite dimensional. 
%

\subsubsection{Checking \cref{hyp:linearEquation}}

As anticipated above, since we are
dealing with a quasilinear formulation ($\cY = \cX$) on a Banach space, we can not make
use of resolvent estimates and we must verify \cref{hyp:linearEquation} directly (see
also \cite{iooss_travelling:2000} for a similar strategy). 
In the following, we write $T(t)=e^{tL}\in\mathcal L(\cX)$ the semigroup generated by
$L$. We initially derive the following useful estimates:
\begin{proposition} \label{prop:expEstimates}
  Assume \cref{hyp:NFSaddleNode}, and let $L$ be the operator defined in
  \cref{eq:LOpNF}, and let $P_u$, $P_s$ be the associated projectors on $X_u$, $X_s$,
  respectively.
    There exists a constant $M>0$ such that
\begin{align}
& \norm{T(t)P_{u}} \leq Me^{\beta_u t}
&&
\text{for all $t \leq 0$, $\beta_u \in \Big(0,\min_{\lambda \in \sigma_u} \real \lambda\Big)$},
\label{eq:unstabBound} \\
& \norm{T(t)P_{s}} \leq Me^{-\beta_s t}
&&\text{for all $t \geq 0$, $\beta_s \in \Big(0,-\sup_{\lambda \in \sigma_s} \real
\lambda\Big)$}.
\label{eq:stabBound}
\end{align}
\end{proposition}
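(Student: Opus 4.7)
The plan is to exploit the fact that, by \cref{hyp:NFSaddleNode} and the compactness argument in \cref{sec:hypSpecNF}, $L = -\id_\cX + WD_uN \in \mathcal L(\cX)$ is a \emph{bounded} operator on $\cX$. Consequently $T(t)=e^{tL}$ is a uniformly continuous semigroup obtained as the norm-convergent exponential series, and the classical spectral mapping theorem for bounded generators applies: for every $t\in\RSet$, $\sigma(T(t))=e^{t\sigma(L)}$. This is the key mechanism that transfers the spectral gap of $L$ into a genuine growth bound of the semigroup on each invariant subspace, without having to appeal to more delicate semigroup results (as would be needed for sectorial or merely $C_0$-generators).

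First I would record that $P_u$ and $P_s$ commute with $L$ (they are the Riesz spectral projectors associated to the disjoint parts $\sigma_u$ and $\sigma_s$ of $\sigma(L)$), so that the subspaces $\cX_u=\range P_u$ and $\cX_s=\range P_s$ are invariant under $T(t)$ and $T(t)P_u = e^{tL_u}P_u$, $T(t)P_s = e^{tL_s}P_s$, where $L_u$ and $L_s$ denote the restrictions of $L$ to $\cX_u$ and $\cX_s$.

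For the unstable part the estimate \eqref{eq:unstabBound} is elementary: by \cref{sec:hypSpecNF}, $\cX_u$ is finite-dimensional, so $L_u$ is a matrix with $\sigma(L_u)=\sigma_u\subset\{\real\lambda>0\}$; the semigroup $e^{tL_u}$ extends to a group on $\cX_u$ and standard Jordan form (or Gelfand-type) estimates give, for any $\beta_u\in(0,\min_{\lambda\in\sigma_u}\real\lambda)$, a constant $M_u>0$ with $\|e^{tL_u}\|\le M_u e^{\beta_u t}$ for all $t\le 0$.

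The main step, and the one I expect to require most care, is \eqref{eq:stabBound} on the infinite-dimensional subspace $\cX_s$. Here the plan is: apply the spectral mapping theorem to the bounded operator $L_s\in\mathcal L(\cX_s)$ to obtain $\sigma(e^{tL_s})=e^{t\sigma_s}$, whence the spectral radius satisfies
\[
r(e^{tL_s})=\exp\Bigl(t\sup_{\lambda\in\sigma_s}\real\lambda\Bigr)<e^{-\beta_s t}
\]
for each $\beta_s\in(0,-\sup_{\lambda\in\sigma_s}\real\lambda)$ and $t\ge 0$. Gelfand's formula $r(e^{tL_s})=\lim_{n\to\infty}\|e^{ntL_s}\|^{1/n}$ together with the semigroup property then yields $\|e^{tL_s}\|\le M_s e^{-\beta_s t}$ for all $t\ge 0$, for some $M_s>0$ (absorbing the transient behaviour on any finite initial interval). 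Taking $M=\max(M_u\|P_u\|,M_s\|P_s\|)$ and using $T(t)P_\bullet=e^{tL_\bullet}P_\bullet$ gives the two bounds stated in the proposition.
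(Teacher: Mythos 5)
Your proof is correct and follows essentially the same route as the paper's: finite-dimensionality of $\cX_u$ (the paper invokes a Dunford decomposition, you use the equivalent Jordan-form estimate) for \eqref{eq:unstabBound}, and the spectral mapping theorem for the bounded operator $T(t)P_s$ combined with Gelfand's spectral radius formula for \eqref{eq:stabBound}. Your explicit observation that $L\in\mathcal L(\cX)$ is bounded (since $WD_uN$ is compact, hence bounded) is the implicit reason the paper can cite the spectral mapping theorem for bounded operators directly, so the two arguments coincide.
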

\begin{proof}
  The space $X_u$ is finite dimensional(see \cref{sec:hypSpecNF}), hence
  $T(t)P_{u}$ has finite-dimensional range, and a Dunford decomposition yields
  \cref{eq:unstabBound}. The operator ${T(t)P_{s}}$ is bounded on $\cX$. As a consequence, the spectral
  mapping theorem \cite[Section VII.3.6, Theorem 11]{dunford_linear:1988} yields
  $\sigma\left(T(t)P_s\right)
\setminus\{0\}= e^{t\sigma_s(L)}$. Finally, an application of the Gelfand spectral
radius theorem \cite{dunford_linear:1988} yields \cref{eq:stabBound}. Note that \cref{hyp:NFSaddleNode} provides the existence of $\beta_u,\beta_s$.
\end{proof}

\begin{proposition}\label{prop:NFHypLinearEquation}
  Assume the hypotheses of \cref{prop:expEstimates}.
Then
\cref{hyp:linearEquation} holds with
\[
(K_{su}f)(t) = -\int_t^\infty T(t-\tau)P_u f(\tau)\,d\tau 
+\int_{-\infty}^tT(t-\tau)P_sf(\tau)\,d\tau\,.
\]
\end{proposition}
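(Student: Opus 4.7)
The plan is to take $K_{su}$ as the explicit convolution operator stated in the proposition and verify, in sequence, that (i) the two improper integrals converge absolutely in $\cX$, (ii) the resulting function belongs to $C_\eta(\RSet, \cX_{su})$ with an explicit operator-norm bound $\Pi(\eta)$, (iii) it solves the inhomogeneous linear problem, and (iv) it is the unique $C_\eta$-solution. A key simplification is that here $\cY_{su} = \cZ_{su} = \cX_{su} = P_{su} \cX$, so no gain of regularity has to be demonstrated; only temporal decay estimates matter. Also, since $L$ has no centre spectrum on $\cX_{su}$, the projector $P_{su}$ splits as $P_{su} = P_u + P_s$, and $T(t)$ acts separately on $\cX_u$ and $\cX_s$ with the bounds provided by \cref{prop:expEstimates}.

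First, I would pick $\beta_u, \beta_s$ in the intervals of \cref{prop:expEstimates} with $\beta_u, \beta_s > \gamma \geq \eta$, possible by \cref{hyp:spectralDecomposition}(ii). For $f \in C_\eta(\RSet, \cX_{su})$ and $\tau \geq t$, \cref{prop:expEstimates} gives $\Vert T(t-\tau) P_u f(\tau) \Vert_\cX \leq M e^{\beta_u(t-\tau)} e^{\eta|\tau|} \Vert f \Vert_{C_\eta}$, and the symmetric estimate on the stable part holds for $\tau \leq t$. Both integrals therefore converge absolutely. Splitting $|\tau| \leq |t| + |\tau - t|$ and reducing to elementary integrals of the form $\int_0^\infty e^{-(\beta_u - \eta)s}\,ds$ and $\int_0^\infty e^{-(\beta_s - \eta)s}\,ds$ yields a bound of the form
\[
  \Vert (K_{su} f)(t) \Vert_\cX \leq \Pi(\eta) \, e^{\eta|t|} \, \Vert f \Vert_{C_\eta},
  \qquad
  \Pi(\eta) = C\,M \bigl( (\beta_u - \eta)^{-1} + (\beta_s - \eta)^{-1} \bigr),
\]
with $\Pi$ continuous in $\eta \in [0, \gamma]$ thanks to $\beta_u, \beta_s > \gamma$. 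Strong continuity of $t \mapsto (K_{su} f)(t)$ in $\cX$ then follows from strong continuity of $T(\cdot)$ on $\cX_u$ and $\cX_s$, combined with dominated convergence against the integrable envelopes above.

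To show that $K_{su} f$ solves $\dot u_{su} = L_{su} u_{su} + f$, I would differentiate each integral via Leibniz's rule: the boundary contributions at $\tau = t$ produce $P_u f(t)$ and $P_s f(t)$ respectively, summing to $(P_u + P_s) f(t) = f(t)$ since $f$ takes values in $\cX_{su}$, while the interior derivatives bring down $L_{su}$ from the semigroup, giving $L_{su} K_{su} f$. This identification can be made rigorous either by first mollifying $f$ in time, or by working directly in the weak sense and using that $L_{su}$ is closed.

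The main obstacle, and the step requiring the most care, is uniqueness. Given two solutions $u^{(1)}, u^{(2)} \in C_\eta(\RSet, \cX_{su})$, their difference $w$ satisfies $\dot w = L_{su} w$, so by the semigroup property $P_u w(t) = T(t-s) P_u w(s)$ for every $s \leq t$, and $P_s w(t) = T(t-s) P_s w(s)$ for every $s \geq t$. Using \cref{prop:expEstimates} and the $C_\eta$-bound on $w$, we estimate
\[
  \Vert P_u w(t) \Vert_\cX \leq M e^{\beta_u (t-s)} e^{\eta|s|} \Vert w \Vert_{C_\eta},
\]
and letting $s \to -\infty$ forces $P_u w(t) = 0$ because $\beta_u > \eta$; symmetrically, sending $s \to +\infty$ in the stable identity gives $P_s w(t) = 0$. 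Hence $w \equiv 0$, which together with the explicit bound derived above yields $\Vert K_{su} \Vert_{\mathcal L(C_\eta(\RSet, \cX_{su}))} \leq \Pi(\eta)$ and completes the verification of \cref{hyp:linearEquation}.
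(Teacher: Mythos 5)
Your overall plan mirrors the paper's own proof closely: split $K_{su}f$ into stable and unstable convolution integrals, bound each using \cref{prop:expEstimates} to obtain the $C_\eta$ estimate with $\Pi(\eta)=M/(\beta_u-\eta)+M/(\beta_s-\eta)$, differentiate under the integral, and argue uniqueness from the exponential dichotomy. The observation that $\cY_{su}=\cZ_{su}=\cX_{su}$ here, so no gain of regularity is needed, is correct and is implicitly used in the paper.

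However, the uniqueness step as written has the directions of the limits reversed, and as a result the estimates you invoke are not actually available from \cref{prop:expEstimates}. You write $P_u w(t) = T(t-s)P_u w(s)$ for $s\leq t$ and then bound $\norm{T(t-s)P_u}\leq Me^{\beta_u(t-s)}$; but for $s\leq t$ one has $t-s\geq 0$, whereas the bound \cref{eq:unstabBound} is only valid for nonpositive arguments (for positive arguments $T(\cdot)P_u$ grows at the rate set by $\sigma_u$, which exceeds $\beta_u$). Even setting that aside, with $s\to-\infty$ both $e^{\beta_u(t-s)}$ and $e^{\eta|s|}$ diverge, so the bound gives no decay. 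Symmetrically, $T(t-s)P_s$ for $s\geq t$ requires negative-time evolution on the infinite-dimensional stable space, which is not defined. The fix is to swap the two cases: use $P_u w(t)=T(t-s)P_u w(s)$ for $s\geq t$ (legitimate since $T$ is a group on the finite-dimensional $X_u$), which gives $\norm{P_u w(t)}\leq Me^{\beta_u(t-s)}e^{\eta s}\norm{w}_{C_\eta}\to 0$ as $s\to+\infty$; and use $P_s w(t)=T(t-s)P_s w(s)$ for $s\leq t$, bounded by $Me^{-\beta_s(t-s)}e^{-\eta s}\norm{w}_{C_\eta}\to 0$ as $s\to-\infty$. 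With these corrections the uniqueness argument recovers the paper's conclusion that $w\equiv 0$ (the paper states it compactly as $\nu(0)\in X_s\cap X_u=\{0\}$). The remainder of your proposal is sound and essentially identical to the paper's.
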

\begin{proof}
  \cref{hyp:linearEquation} is stated in terms of the constant $\gamma>0$, fixed as
  in \cref{hyp:spectralDecomposition}(ii). The constant can be chosen to be
  $\gamma = \min(\beta_s,\beta_u)/2$, where $\beta_u,\beta_s$ are in the range
  specified in \cref{prop:expEstimates}.
  Henceforth we fix $\eta \in [0,\gamma]$, $f \in C_\eta(\RSet,Y_{su})$, let
  $u = K_{su}f$, and we prove that \cref{hyp:linearEquation} holds for $u$, using 4
  steps.

\emph{Claim 1: $u$ is exponentially bounded.} Owing to the standard properties of
the projectors $P_s,P_u$, we have $u(t)\in \cZ_{su}$ for all $t\in\mathbb R$.
We set $u = u_s + u_u$ with 
\[
  u_s(t)  =\int_{-\infty}^tT(t-\tau)P_sf(\tau)\,d\tau, 
  \qquad
  u_u(t) = -\int_t^\infty T(t-\tau)P_uf(\tau)\,d\tau,
\]
and use \cref{eq:stabBound} to derive the following bound
\[
\begin{aligned}
  \norm{u_u(t)}_\cZ 
 & \leq M\norm{f}_\eta \int_t^\infty e^{\beta_u(t-\tau)+\eta|\tau|}\,d\tau \\
 & \leq M\norm{f}_\eta e^{\eta|t|} \int_0^\infty e^{(\tau-\beta_u)\tau}\,d \tau=
 \frac{M}{\beta_u-\eta}\norm{f}_\eta e^{\eta |t|}.
\end{aligned}
\]
A similar bound can be found for $u_s$, and we obtain
\[
  \norm{u(t)}_\cZ\leq \bigg( \frac{1}{\beta_u-\eta}+\frac{1}{\beta_s-\eta} \bigg)
  M\norm{f}_\eta e^{\eta|t|}.
\]

\emph{Claim 2: $u$ is in $C_\eta(\mathbb R,\cZ_{su})$.}
The above estimate, the continuity of $f \colon \RSet \to \cX$ and the dominated
convergence theorem imply that $u \colon \mathbb \RSet \to \cX$ is continuous and $u\in
C_\eta(\mathbb R,\cZ_{su})$. In passing we note that the estimate above also implies
$\norm{K_{su}}\leq \Pi(\eta)=M/(\beta_u-\eta)+M/(\beta_s-\eta)$.

\emph{Claim 3: $u$ is differentiable in $\cX$ and solves \eqref{eq:linProbFastSubs}.}
We treat the term $u_s(t)$ for some fixed $t\in\mathbb R$, as the result for $u_u(t)$
follows in a similar way. Using a Taylor expansion of the exponential
$T_s(\tau)=e^{L_s\tau}$ we obtain, for sufficiently small $\eps$,
\[
  \begin{aligned}
u_s(t+\eps)-u_s(t) 
& =
\int_{-\infty}^t [T_s(t-\tau + \eps) - T_s(t-\tau)] f(\tau)\, d\tau
+
\int_{t}^{t+\eps} T_s(t-\tau + \eps) f(\tau)\, d\tau
    \\
    & =\eps L_s \int_{-\infty}^t T_s(t-\tau)f(\tau)\,d\tau 
    + T_s(\eps) \int_{t}^{t+\eps} T_s(t-\tau) f(\tau)\, d\tau + O(\eps^2)
	\\
	& =\eps L_s \int_{-\infty}^t T_s(t-\tau)f(\tau)\,d\tau 
	+ \eps P_s f(t) + O(\eps^2).
  \end{aligned}
\]
Hence, $u_s$ is differentiable at $t$ and $\dot u_s(t) = P_sf(t)+L_su_s(t)$. It
follows that $u_s$ is $C^1$ in $\cX$. The claim is proved using a similar argument for
$u_u$.

\emph{Claim 4: $u$ is unique.} Assume there are two solutions $u,\tilde u$ to
\eqref{eq:linProbFastSubs}. Then $\nu = u - \tilde u$ solves $\dot \nu =
L_{su}\nu$ and belongs to $C_\eta(\mathbb R,\cX_{su})$. This implies $\nu(0)\in
X_s\cap X_u=\{0\}$, hence $\nu(t) \equiv 0$ and the uniqueness of $u$.
\end{proof}

\subsubsection{Reduction to 1-fast 2-slow system} 
We now apply \cref{lem:FoldNormalForm} to \cref{eq:NFBanach}. We note that
$\gamma(\mu,\eps) = 0$ because $R$ is independent of $(\mu,\eps)$; in addition, $R$
does not depend on $v_2$, hence $\alpha(B)$ and higher order terms depend solely on
$B_1$.
In a neighbourhood of the origin we obtain the reduction
  \begin{equation}\label{eq:NFReduced}
  \begin{aligned}
  \dot A & = \alpha B_1 + \beta A^2 
  + \omega(A,B_1,\mu,\eps) \\
\dot B_1 & = \eps 
\big(
  B_2 + \mu_3 H(A,B_1,B_2,\mu,\eps)
\big) \\
\dot B_2 & = \eps 
\big(
  -v_{*1} - B_1 + \mu_1 + \mu_2 H(A,B_1,B_2,\mu,\eps)
\big) \,,\\
  \end{aligned}
\end{equation}
where $H \colon \RSet^3 \times \RSet^p \times \RSet \to \RSet$ is defined
by
\begin{equation}\label{eq:HDef}
  H(A,B_1,B_2,\mu,\eps) = Q(u_* + A \zeta + \Psi(A,(B_1,B_2),\mu,\eps),v_{*1} + B_1),
\end{equation}
and
\[
  \omega(A,B_1,\mu,\eps) = O\big(A^2(|B_1|+|\mu|+|\eps|)+(|B_1|+|\mu|+|\eps|)^2\big).
\]

\begin{lemma}\label{lem:NFFoldedSing}
  Assume $\alpha, \beta \neq 0$, and let $O_\mu$ be defined as in
  \cref{lem:FoldNormalForm}. There exists an open subset $U_\mu \subset O_\mu$ and a
  function
  $\xi \colon U_\mu \to \RSet$ such that system \eqref{eq:NFReduced} admits a
  folded singularity at $(A,B_1,B_2,\mu) = (0,0,\xi(\mu),\mu)$ for all $\mu \in
  U_\mu$.  The
  folded singularity is
  \begin{itemize}
    \item a folded saddle if $J_{12}J_{21}>0$,
    \item a folded node if $J_{11} < 0$ and $J_{11}^2 + 4 J_{12}J_{21} > 0$,
    \item a folded saddle-node if $J_{12}J_{21} = 0$,
  \end{itemize}
  where
  \[
    \begin{aligned}
    J_{11} & = \mu_3 \beta/\alpha \big[
      \partial_A H(0,0,\xi(\mu),\mu,0) + \partial_{B_1}
      H(0,0,\xi(\mu),\mu,0)  \partial_A \eta(0,\mu)
    \big],\\
    J_{12} & = \beta/\alpha 
    \big[
      1 + \mu_3 \partial_{B_2} H (0,0,\xi(\mu),\mu,0) 
    \big], \\
    J_{21} & = -2( -v_{*1} + \mu_1 + \mu_2 H(0.0,\xi(\mu),\mu,0).
    \end{aligned}
  \]
\end{lemma}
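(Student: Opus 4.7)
The plan is to follow the standard geometric singular perturbation procedure for identifying and classifying folded singularities in the three-dimensional $1$-fast, $2$-slow system \cref{eq:NFReduced}. First I would parametrise the critical manifold
\[
S_0 = \{(A,B_1,B_2) \colon \alpha B_1 + \beta A^2 + \omega(A,B_1,\mu,0) = 0\}.
\]
Because $\alpha \neq 0$, an application of the implicit function theorem yields a $C^k$ function $\eta$ with $\eta(0,0)=0$ such that $S_0$ is locally the graph $\{(A,\eta(A,\mu),B_2)\}$ parametrised by $(A,B_2)$. The structure of $\omega$, which carries no linear-in-$A$ term, gives $\partial_A\eta(0,0)=0$, so the projection of $S_0$ onto the slow subspace $(B_1,B_2)$ is singular precisely along $\{A=0\}$: this is the fold curve of $S_0$.

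Next, I would derive the reduced slow flow on $S_0$ in the standard way. Differentiating the constraint $\alpha B_1 + \beta A^2 + \omega = 0$ along a trajectory and rescaling to the slow time $\tau = \eps t$ yields a planar vector field on $S_0$ with a removable singularity along the fold. Multiplying by the vanishing factor $2\beta A + \partial_A\omega$ produces the regular desingularised flow
\begin{align*}
A' &= -(\alpha+\partial_{B_1}\omega)\,G_1(A,B_2,\mu),\\
B_2' &= (2\beta A + \partial_A\omega)\,G_2(A,B_2,\mu),
\end{align*}
where $G_1,G_2$ are the first and second slow components of \cref{eq:NFReduced} restricted to $S_0$. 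A folded singularity is a zero of this field lying on the fold. The $B_2'$-equation vanishes identically at $A=0$, so the defining condition reduces to $G_1(0,B_2,\mu)=B_2+\mu_3 H(0,0,B_2,\mu,0)=0$. At $\mu=0$ this is simply $B_2=0$, with $\partial_{B_2}G_1(0,0,0)=1\neq 0$; the implicit function theorem then provides a neighbourhood $U_\mu\subset O_\mu$ and a $C^k$ function $\xi\colon U_\mu\to\RSet$ with $\xi(0)=0$ such that $(A,B_1,B_2)=(0,0,\xi(\mu))$ is a folded singularity for every $\mu\in U_\mu$.

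The classification then follows by linearising the desingularised field at $(0,\xi(\mu))$. Because $G_1$ vanishes there and the right-hand side of the $B_2'$-equation carries an explicit factor of $A$, the Jacobian takes the upper-Hessenberg form
\[
J = \begin{pmatrix} J_{11} & J_{12} \\ J_{21} & 0 \end{pmatrix},
\]
so that $\operatorname{tr}J = J_{11}$ and $\det J = -J_{12}J_{21}$. A direct chain-rule computation, exploiting the dependence $B_1 = \eta(A,\mu)$ in $G_1$ and $G_2$ and the defining identity for $\xi$, recovers the leading-order expressions for $J_{11},J_{12},J_{21}$ stated in the lemma, up to a common non-vanishing scalar fixed by the chosen normalisation of the desingularisation and not affecting the sign conditions below. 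The classical trichotomy for folded singularities of three-dimensional slow-fast systems \cite{SW01,W05,W12} then yields the claimed classification: a folded saddle when $\det J<0$, i.e.\ $J_{12}J_{21}>0$; a folded node when $J_{11}<0$ together with $J_{11}^2+4J_{12}J_{21}>0$; and a folded saddle-node when a zero eigenvalue appears, $\det J = 0$, i.e.\ $J_{12}J_{21}=0$. The main obstacle is the bookkeeping needed to verify the stated form of the Jacobian entries in the presence of the higher-order remainder $\omega$ and of the implicit map $\eta$; once these derivatives are correctly assembled, the trichotomy is a direct consequence of two-dimensional linear algebra and of the cited ODE results.
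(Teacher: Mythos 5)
Your proposal follows essentially the same route as the paper: an implicit-function-theorem parametrisation of the critical manifold giving $\eta$, passage to the desingularised slow flow on the critical manifold, a second implicit-function-theorem step (using $\partial_{B_2}G_1(0,0,0)=1\neq 0$) giving $\xi$, identification of the fold at $A=0$ from $\partial_A\eta(0,0)=0$, and classification via the Hessenberg Jacobian $J = \begin{pmatrix} J_{11} & J_{12} \\ J_{21} & 0 \end{pmatrix}$. All of this coincides with the published argument.

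The one step that needs more care is the claim that the chain-rule computation "recovers the leading-order expressions for $J_{11}, J_{12}, J_{21}$ stated in the lemma, up to a common non-vanishing scalar\ldots not affecting the sign conditions." Two caveats. First, you multiply the reduced flow by the factor $2\beta A + \partial_A\omega$, whereas the convention in the paper's Appendix ($d\tau = -\det(D_uF)\,d\tau_1$) multiplies by $-(2\beta A + \partial_A\omega)$; a global sign flip is a time reversal of the desingularised planar system and hence flips the sign of $J_{11}$, so the trace criterion $J_{11}<0$ is \emph{not} invariant under the sign of your scalar. Second, the paper first rescales $\tilde t = t/\beta$, $\kappa=\beta/\alpha$ and only then desingularises; tracking the leading coefficients through both conventions, the $\dot A$-row and the $\dot B_2$-row of the two desingularised vector fields acquire \emph{different} multiplicative factors (ratios $-\alpha^2/\beta$ versus $-\beta$ at leading order), and the discriminant $J_{11}^2+4J_{12}J_{21}$ is not invariant under separately rescaling the two rows. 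Neither point invalidates the strategy, which is the right one, but to assert the lemma's stated entries and the trichotomy you must either adopt the paper's scaling and sign conventions or explicitly exhibit the relation between your Jacobian and the one in the statement, rather than appeal to a "common scalar" that does not in fact exist.
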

\begin{proof}
  By setting $\tilde t = t/\beta$, $\tilde \eps = \eps/\beta$, $\kappa =
  \beta/\alpha$ (and dropping the tilde in $\eps$) we cast system \cref{eq:NFReduced}
  as
\begin{equation}\label{eq:NFReducedScaled}
  \begin{aligned}
  \dot A & = \kappa B_1 + A^2 +\omega(A,B_1,\mu,\eps), \\
  \dot B_1 & = \eps
      \big(
	B_2 + \mu_3 H(A,B_1,B_2,\mu,\eps)
      \big), \\
      \dot B_2 & = \eps 
      \big(
	-v_{*1} - B_1 + \mu_1 + \mu_2 H(A,B_1,B_2,\mu,\eps)
    \big). \\
  \end{aligned}
\end{equation}
After passing to the slow time $\tau = \eps t$ in \cref{eq:NFReducedScaled} we
obtain, at $\eps = 0$, the slow subystem
\begin{equation}\label{eq:NFDAE}
  \begin{aligned}
       0 & = \kappa B_1 + A^2 +\omega(A,B_1,\mu,0), \\
    B'_1 & = 
      \big(
	B_2 + \mu_3 H(A,B_1,B_2,\mu,0)
      \big), \\
      B'_2 & = 
      \big(
	-v_{*1} - B_1 + \mu_1 + \mu_2 H(A,B_1,B_2,\mu,0)
    \big). \\
  \end{aligned}
\end{equation}
The critical manifold is a graph over $A$ in the neighbourhood of the origin; the
Implicit Function Theorem guarantees the existence of a subset $V_A \times V_\mu$ of
$O_A \times O_\mu$, and a unique function $\eta: V_A \times V_\mu \to
\RSet$ such that
\[
  0 = \kappa \eta(A,\mu) + A^2 +\omega(A,\eta(A,\mu),\mu,0) 
  \qquad
  (A,\mu) \in V_A \times V_\mu.
\]
The desingularised reduced system associated to \cref{eq:NFDAE} is given by
  \begin{equation}\label{eq:NFDRS}
  \begin{aligned}
    \dot A & = 
        \big[\kappa + \partial_{B_1} \omega(A,\eta(A,\mu),\mu,0)\big]
	\big[B_2 + \mu_3 H(A,\eta(A,\mu),B_2,\mu,0) \big] \\
    \dot B_2 & = \big[-2A - \partial_{A} \omega(A,\eta(A,\mu),\mu,0)\big] \big[ -v_{*1} - B_1 +
    \mu_1 + \mu_2 H(A,\eta(A,\mu),B_2,\mu,0)\big].
  \end{aligned}
\end{equation}
Since by the Implicit Function Theorem
$\kappa + \partial_{B_1} \omega(A,\eta(A,\mu),\mu,0)$ does not vanish in $V_A \times
V_\mu$, then a folded singularity of \cref{eq:NFReduced} is an equilibrium of
\cref{eq:NFDRS} satisfying
  \begin{align*}
    0 & =  B_2 + \mu_3
    H(A,\eta(A,\mu),B_2,\mu,0), \\
    0 & = -2A - \partial_{A} \omega(A,\eta(A,\mu),\mu,0).
  \end{align*}
  The second equation holds for $A=0$. A further application of the Implicit Function
  Theorem to the first equation with $A =0$ then yields the existence of $U_\mu
  \subset V_\mu$ a function
  $\xi \colon U_\mu \to \RSet$ such that $(A,B_1,B_2)= (0,0,\xi(\mu))$ is a folded
  singularity. The statement follows from
the fact that the Jacobian of \cref{eq:NFDRS} at the folded singularity is the matrix
\[
  \begin{bmatrix}
    J_{11} & J_{12} \\
    J_{21} & 0
  \end{bmatrix}.
\]
\end{proof}

The previous lemma implies that solutions to the original neural field model near
saddle-node bifurcation points of the fast subsystem, which are patterned states,
are expected to display canard segments. The existence of these structures was
predicted analytically in \cite{avitabile2017spatiotemporal} in the case of Heaviside
firing rate $\theta$ and is therefore valid for generic smooth, bounded firing rates.
Simulations have been reproduced from that paper in
\cref{fig:NeuralFieldRing,fig:NeuralFieldSphere}.  We note that, while
\cref{lem:NFFoldedSing} does not directly apply when $\Omega$ is a ring or a sphere,
similar dynamics is observable on Neural Field models with a heterogeneous external
input, which are not equivariant with respect to the action of a Lie group, and to which
\cref{lem:NFFoldedSing} applies.

\subsection{Delay-differential equation}
\label{sec:AnalysisDDE}
We now present results for the DDE \cref{eq:DDE}. The
treatment of this case is separate to the others, as it requires different technical
tools. We provide pointers to specialised literature on the topic further below in
the section.

We denote by $u_*$ an equilibrium of the fast subsystem associated to \eqref{eq:DDE},
set $u=u_*+x$, and obtain
\begin{equation}
\begin{aligned}
\dot x(t) &= v x(t) - x(t-\tau) +u_*^3-(u_*+x(t))^3,
	  && t\geq 0\\
x(t) &= \phi(t),&& t\in[-\tau,0].
\end{aligned}
\label{eq:DDE2}
\end{equation}

The analysis of delay differential equations is technical and
relies on tools from functional analysis and semigroup theory
\cite{engel2000}. In this section, we aim to provide a self-contained treatment for
the analysis of \eqref{eq:DDE2}, and we refer the reader to the textbooks
\cite{hale_introduction_2014, diekmann_delay_1995} for an exhaustive treatment of the
subject.

To make sense of the right hand side of \eqref{eq:DDE2}, $x$ must be known on a time
interval of length $\tau$, typically a \textit{history} time interval $[t-\tau,t]$. This implies that the state space of \eqref{eq:DDE2} is
infinite dimensional and included in some function space from $[-\tau,0]$ to $\mathbb
R$. The choice of this function space affects the analysis of \eqref{eq:DDE2}. For
example, if we were to choose $\cX=C^0([-\tau,0],\RSet)$, it
would appear that the operator $L$ in \eqref{eq:fastSubsystemInf} encodes the delay
differential equation in its domain $\cZ$
(see \cite{hale_introduction_2014} and Remark~\ref{rmq:dde}). We therefore would not
apply the centre-manifold results presented in the previous sections, as the space
$\cZ$ would depend on the parameter $v$.

The standard formalism aimed to tackle this difficulty is that of the
\textit{sun-star} calculus \cite{diekmann_center_1991,diekmann_delay_1995} which
seeks solutions in a larger state space of less regular functions. This theory can
also be applied to centre-manifold reductions \cite{diekmann_delay_1995}. Here we
present a self-contained example that does not require the additional technical
knowledge of sun-star calculus. We proceed as follows:
\begin{enumerate}
  \item We select the state spaces $\cZ,\cY,\cX$ as suggested by the sun-star
    theory and check that they are suitable for our problem (see also
    \cite{batkai_semigroups_2005} for a way to bypass the use of sun-star calculus).
  \item We do not apply the center manifold reduction exposed in
    \cite{diekmann_center_1991,
    diekmann_delay_1995} because that approach looks for integral solutions to the
    problem and thus solves \eqref{eq:linProbFastSubs} in $C_\eta(\RSet,\cX)$ instead
    of $C_\eta(\RSet,\cZ)$, which is required by our formalism. It is possible to
    adapt the
    proof in \cite{diekmann_center_1991, diekmann_delay_1995} to fit our framework,
    but we choose a direct and self-contained approach. We highlight that we
    will exploit (in Lemma~\ref{lemma:hyp5.3-dde}) the fact that the
    nonlinearity $R$ has a finite-dimensional range, as it is done in the sun-star
    references given above.
\end{enumerate}

Before proceeding we introduce, for any $t\geq 0$ the \textit{history function}
$w_t:[-\tau,0]\to\cX$ of $w:[-\tau,\infty)\to\cX$ defined by
$w_t(\theta):=w(t+\theta)$.

\subsubsection{Function spaces and \cref{hyp:vectField}}
A first step in the presentation is to rewrite \eqref{eq:DDE2} as a Cauchy problem.
We chose a Hilbert space setting with $\cX= \cY = \RSet\times L^2((-\tau,0),\RSet)$ and we denote
by $\pi_1$, $\pi_2$ the canonical projections from $\cX$ onto $\RSet$ and
$L^2((-\tau,0),\RSet)$, respectively. We set $\cZ = 
\{u\in \RSet\times W^{1,2}((-\tau,0),\RSet)\ | \ (\pi_2u)(0) = \pi_1u\}$ and note that
$\cZ$ is continuously embedded in $\cX$. This allows us to define the following
operators (see \cite{batkai_semigroups_2005}):
\begin{equation}\label{eq:DDELR}
L=\begin{pmatrix}
v & \Phi \\
0 & \frac{d}{d\theta}
\end{pmatrix}\in\mathcal L(Z,X),
\quad R(u) = 
\begin{pmatrix}
u_*^3-(u_*+\pi_1(u))^3 \\
0
\end{pmatrix}
\end{equation}
where $\Phi=u_2\to -u_2(-\tau)\in\mathcal L(\mathcal\rm
W^{1,2}(-\tau,0;\RSet),\RSet)$. Note that $R(u)\notin \cZ$ for $u\in\cZ$. However,
$R\in C^\infty(\cZ,\cX)$ because $\cZ$ is a Banach algebra. 

In conclusion, we consider the Cauchy problem
\[\dot u = Lu +R(u)\]
with initial condition in $\cZ$.
Although it may appear mysterious at first, one can check, using the first
component, that \eqref{eq:DDE2} is indeed encoded by the above Cauchy problem and
this provides \cref{hyp:vectField}.

\begin{remark}\label{rmq:dde}
  A possible alternative choice \cite{hale_introduction_2014,diekmann_delay_1995} is to
  define 
  \[
  \cX=C^0([-\tau,0],\RSet),
  \quad
  \cZ = \bigg\{
    \phi\in C^1([-\tau,0],\RSet)\ | \  \frac{d}{d\theta}\phi(0) = v\phi(0) -
  \phi(-\tau) 
\bigg\},
  \]
  and then
  \[
  L:\cZ\to \cX, \qquad u \mapsto \frac{d}{d\theta}u. 
  \]
  With this choice $\cZ$ depends on the parameter $v$, which is not suitable for
  our analysis because the spaces $\cZ,\cY,\cX$ cannot depend on a varying parameter.
  One then can use the sun-star formalism
  \cite{diekmann_center_1991,diekmann_delay_1995,engel2000}, akin to a double dual
  operation, to rewrite the problem in the space $\cX = \mathbb R\times
  L^\infty((-\tau,0),\mathbb R)$ obtaining a form similar to \cref{eq:DDELR}.
\end{remark}

\subsubsection{Checking \cref{hyp:spectralDecomposition}} We now check hypotheses on
the spectrum of $L$ in \cref{eq:DDELR}, which is characterised in the following lemma:
\begin{lemma}\label{lemma:DDEspectrum}
 Fix $\tau>0 $, $v
 \in\RSet$. The spectrum of $L$ is
 composed solely of eigenvalues and is given by $\sigma(L) = \{
 \lambda \in \CSet \colon v - e^{-\lambda \tau} -\lambda = 0\}$. An eigenvector associated to the eigenvalue $\lambda \in \sigma(L)$ is $(1,e^{\lambda\blank})\in\cZ$.
\end{lemma}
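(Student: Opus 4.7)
The plan is to compute eigenvalues directly from the system $Lu=\lambda u$, to verify by explicit construction that every complex number outside the zero set of $\lambda\mapsto v-e^{-\lambda\tau}-\lambda$ lies in the resolvent set $\rho(L)$, and finally to invoke compactness of the embedding $\cZ\hookrightarrow\cX$ to conclude that $\sigma(L)$ consists only of eigenvalues.

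First I would solve $Lu=\lambda u$ for $u=(u_1,u_2)\in \cZ$. The second component gives the ODE $\frac{d}{d\theta}u_2=\lambda u_2$, whose general solution is $u_2(\theta)=c\,e^{\lambda\theta}$; the compatibility constraint $(\pi_2 u)(0)=\pi_1 u$ built into $\cZ$ fixes $c=u_1$. Substituting $u_2(-\tau)=u_1 e^{-\lambda\tau}$ into the first component $vu_1-u_2(-\tau)=\lambda u_1$ yields $(v-e^{-\lambda\tau}-\lambda)u_1=0$, so a nontrivial solution exists if and only if $\lambda$ satisfies the characteristic equation, and in that case $(1,e^{\lambda\blank})\in\cZ$ is an eigenvector.

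Next, for any $\lambda$ with $v-e^{-\lambda\tau}-\lambda\neq 0$ and any $f=(f_1,f_2)\in\cX$, I would solve $(\lambda-L)u=f$ explicitly. The second equation $\lambda u_2-\frac{d}{d\theta}u_2=f_2$ together with $u_2(0)=u_1$ gives, by variation of parameters,
\[
u_2(\theta)=e^{\lambda\theta}u_1-\int_0^\theta e^{\lambda(\theta-s)}f_2(s)\,ds,
\]
and evaluating at $\theta=-\tau$ the first equation becomes
\[
(\lambda-v+e^{-\lambda\tau})u_1=f_1-\int_{-\tau}^0 e^{-\lambda(\tau+s)}f_2(s)\,ds.
\]
The assumption on $\lambda$ lets me solve uniquely for $u_1\in\RSet$, and standard $L^2$ and $W^{1,2}$ estimates on the integral representation of $u_2$ show that $(\lambda-L)^{-1}\in\mathcal L(\cX)$. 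Hence $\lambda\in\rho(L)$, establishing the reverse inclusion and completing the characterization of $\sigma(L)$.

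Finally, to show the spectrum consists exclusively of eigenvalues, I would use the Rellich--Kondrachov compact embedding of $W^{1,2}((-\tau,0),\RSet)$ into $L^2((-\tau,0),\RSet)$: combined with finite-dimensionality of the first component, this makes $\cZ\hookrightarrow\cX$ compact, so $(\lambda-L)^{-1}$ is a compact operator for any fixed $\lambda\in\rho(L)$ (such a $\lambda$ exists because the characteristic function is not identically zero). Standard spectral theory for operators with compact resolvent then gives that $\sigma(L)$ is a discrete set of eigenvalues with finite algebraic multiplicity, matching the set identified above. The only mildly delicate step is keeping track of the compatibility condition in $\cZ$ throughout the resolvent construction; the rest is routine.
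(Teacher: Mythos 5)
Your proposal is correct, and the core computations (solving $Lu=\lambda u$ component-wise to find the characteristic equation and eigenvector, then constructing the resolvent explicitly via variation of parameters and verifying $(\lambda - L)^{-1} \in \mathcal{L}(\cX)$) coincide with the paper's proof, up to a harmless sign convention (the paper solves $Lu = \lambda u + w$, i.e.\ $w = -f$). You correctly note that the compatibility constraint $(\pi_2 u)(0)=\pi_1 u$ is automatically satisfied by the variation-of-parameters formula at $\theta=0$, which is the one point that must be checked and which you flag.

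The only genuine divergence is the final step. The paper concludes that the spectrum consists solely of eigenvalues directly: having shown $\sigma(L)$ equals the zero set of $\lambda \mapsto v - e^{-\lambda\tau} - \lambda$, and that each such $\lambda$ admits the explicit eigenvector $(1, e^{\lambda\blank}) \in \cZ$, the statement follows immediately. You instead invoke compactness of the embedding $\cZ \hookrightarrow \cX$ (Rellich--Kondrachov on the bounded interval plus finite-dimensionality of the $\RSet$-factor), which makes $(\lambda - L)^{-1}$ compact for $\lambda \in \rho(L)$ and yields a spectrum of isolated eigenvalues with finite algebraic multiplicity. Your route is correct but strictly stronger than needed for the lemma as stated, and arguably redundant given that you have already exhibited the eigenvectors in the first paragraph. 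That said, it does deliver the additional structural facts (isolated, finite multiplicity) as a byproduct; the paper obtains the same facts just after the lemma by observing that $\sigma(L)$ is the zero set of a nonconstant holomorphic function.
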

\begin{proof}
In this proof, we write $L^2 = L^{2}((-\tau,0),\RSet)$ and $W^{1,2} = W^{1,2}((-\tau,0),\RSet)$ to simplify notations.	
Let $E=\{\lambda\in\CSet:\ v-e^{-\lambda\tau}-\lambda\neq0\}$. We prove that
$E\supseteq\rho(L)$ by showing that $\CSet\setminus E\subset\sigma(L)$. For each
$\lambda$ solution of $v-e^{-\lambda\tau}-\lambda=0$, $(1,e^{\lambda\blank})\in\cZ$
is an eigenvector showing that $\lambda-L$ is not invertible.

Conversely, we consider $\lambda\in E$. We show that the equation $Lu=\lambda u +
w$ has a unique solution $u\in\cZ$ for any $w\in \cX$. Writing $u=(u_1,u_2)$, one
finds 
\[
 \frac{d}{d\theta}u_2=\lambda u_2+w_2, \qquad
u_2(\theta) = e^{\lambda\theta}u_1+\int_0^\theta
e^{\lambda(\theta-s)}w_2(s)ds.
\]
The implies that $u_2$ is of Sobolev regularity. One also has 
$v u_1-u_2(-\tau) = \lambda u_1+w_1$ which gives
\[(v-e^{-\lambda\tau}-\lambda)u_1 = w_1+\int_0^{-\tau} e^{\lambda(-\tau-s)}w_2(s)ds.\]
This equation has a unique solution $u_1\in\RSet$ because $\lambda\in E$ and this
provides the unique solution to $Lu=\lambda u + w$. Finally, $\Vert u \Vert_{\cZ}=
|u_1|+\Vert u_2 \Vert_{W^{1,2}}$. From the Cauchy-Schwartz inequality
\[
|u_1|\leq \frac{1}{|v-e^{-\lambda\tau}-\lambda|}\left(|w_1|+\Vert e^{\real\lambda\blank}
\Vert_{L^{2}}\Vert w_2
\Vert_{L^{2}}\right)=O\left(\Vert w \Vert_{\cX}\right).
\]
From the definition of $u_2$, we find
\[
\Vert u_2 \Vert_{L^{2}}\leq \Vert e^{\lambda\blank} \Vert_\infty\left(|u_1| + \sqrt\tau\Vert w_2 \Vert_{L^{2}}  \right)=O\left(\Vert w \Vert_{\cX}\right).
\]
Finally, 
\[
\bigg \Vert \frac{d}{d\theta}u_2 \bigg\Vert_{L^{2}}\leq |\lambda|\cdot\Vert u_2 \Vert_{L^{2}}+\Vert w_2 \Vert_{L^{2}}=O\left(\Vert w \Vert_{\cX}\right).
\]
This implies that $\Vert u \Vert_{W^{1,2}}=O\left(\Vert w \Vert_{\cX}\right)$.
We have shown that 
\[ \sigma(L) = \{\lambda\in\CSet:\ v-e^{-\lambda\tau}-\lambda = 0\}.\]
We note that the spectrum is composed of eigenvalues $\lambda$ because
the vector $(1,e^{\lambda\blank})\in\cZ$ is an associated eigenvector. 
\end{proof}

The spectrum of $L$ is composed of the
zeros of an holomorphic function, hence the (at most countable) spectral elements are isolated. Therefore
\cref{hyp:spectralDecomposition} holds and $\sigma_u$ is finite. As a consequence
$\cX_u$ is finite dimensional. Note that the Dunford projectors $P_u,P_s$
are easily expressed using the expression of the eigenvectors and the scalar product on $\cX$.

\begin{remark}
Using the different branches $(W_k)_{k\in\mathbb Z}$ of the Lambert function $W$ (see \cite{corless_lambertw_1996}) which satisfies $W(z)e^{W(z)}=z,\ z\in\mathbb C$, it is possible to compute all the eigenvalues
\[
\lambda_k = v+\frac1\tau W_k\left(-\tau e^{-\tau v}\right),
 k\in\mathbb Z.
\]
\end{remark}

\subsubsection{Checking \cref{hyp:linearEquation}}
We now check the last hypothesis required to have a center manifold, namely
\Cref{hyp:linearEquation} which is more challenging than the previous steps. The case
we study is a nontrivial example for which the sufficient conditions stated in ~\cite[Section
2.2.3]{Haragus2010} do not hold, whereas \Cref{hyp:linearEquation} is satisfied. As
for the neural field example, we check directly that the solution to
\eqref{eq:linProbFastSubs} is 
\[
(K_{su}f)(t) = -\int_t^\infty T(t-r)P_uf(r)dr+\int_{-\infty}^tT(t-r)P_sf(r)dr
\]
where $(T(t))$ is the (strongly continuous) semigroup of solutions generated by $L$. This semigroup can be directly expressed using the method of steps. We first give the expression of $(T(t))$ which allows to properly define $K_{su}$.

\begin{lemma}\label{lemma:ddeT}
Let $\vecd{x}{\phi}\in\cX$. For each $t\in[0,\tau]$ we have
\begin{equation}
  \begin{aligned}
    \pi_1\left(T(t)\vecd{x}{\phi}\right) & = e^{vt}x-\int_0^te^{v(t-s)}\phi(s-\tau)ds:=u_1(t)\\
    \pi_2\left(T(t)\vecd{x}{\phi}\right) & = (u_1)_t
  \end{aligned}
\end{equation} 
where $(u_1)_t$ is the history function of $u_1$. In addition,
$\range(T(\tau))\subset \cZ$.
\end{lemma}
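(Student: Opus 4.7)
\textbf{Proof plan for Lemma \ref{lemma:ddeT}.}
The plan is to construct the semigroup explicitly via the method of steps/characteristics, and then verify range inclusion directly from the resulting regularity. First I would split the Cauchy problem $\dot u = Lu$ into its two coordinates using the block-triangular form of $L$: the second coordinate satisfies the pure transport equation $\partial_t u_2 = \partial_\theta u_2$ on $(-\tau,0)$, coupled to the first coordinate only through the boundary constraint $u_2(t,0) = u_1(t)$ built into $\cZ$, while the first coordinate evolves by $\dot u_1 = v u_1 - u_2(t,-\tau)$.

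Next, for smooth compatible initial data $(x,\phi)$ I would solve the transport equation by characteristics, obtaining $u_2(t,\theta) = u_1(t+\theta)$ when $t+\theta \geq 0$ and $u_2(t,\theta) = \phi(t+\theta)$ when $t+\theta < 0$; this is precisely the history function $(u_1)_t$ with the convention that $u_1$ is extended by $\phi$ on $[-\tau,0)$. Substituting into the first equation yields, for $t \in [0,\tau]$, the scalar linear ODE $\dot u_1 = v u_1 - \phi(t-\tau)$ with initial condition $u_1(0)=x$, whose variation-of-constants solution is exactly the claimed formula. The extension to general $(x,\phi) \in \cX$ follows by density: the explicit formula defines a bounded linear operator on $\cX$ for each $t \in [0,\tau]$, and it agrees on a dense subset with $T(t)$, so the two coincide on $\cX$.

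Finally, for the range statement at $t=\tau$, the first component $u_1(\tau)\in\mathbb R$ is immediate. For the second component $(u_1)_\tau$, note that $u_1$ on $[0,\tau]$ is absolutely continuous with derivative $\dot u_1 = v u_1 - \phi(\blank -\tau) \in L^2(0,\tau)$ since $\phi \in L^2((-\tau,0),\RSet)$, so $(u_1)_\tau \in W^{1,2}((-\tau,0),\RSet)$. The compatibility condition in $\cZ$, namely $(\pi_2 u)(0) = \pi_1 u$, reduces to $u_1(\tau) = u_1(\tau)$, which is automatic. Hence $T(\tau)(x,\phi) \in \cZ$.

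The main obstacle I would expect is the rigorous justification of the characteristics argument when $\phi$ is only $L^2$, because classical transport solutions require more regularity. The cleanest way around this is the density/boundedness argument sketched above: establish the formula on $C^1$ functions satisfying $\phi(0)=x$ (where $\cZ$-valued orbits exist classically), verify that the right-hand side of the formula is continuous from $\cX$ to $\cX$ uniformly in $t\in[0,\tau]$, and conclude by approximation, using that $(T(t))_{t\geq 0}$ is strongly continuous on $\cX$.
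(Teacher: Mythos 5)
Your plan follows the paper's proof: establish the formula for initial data in $\cZ$ via the method of steps and variation of constants, then extend to all of $\cX$ by density of $\cZ$ and boundedness of the explicit formula. The only (minor) divergence is that the paper invokes Propositions~3.9 and~3.11 of B\'atkai--Piazzera to identify the abstract Cauchy problem with the scalar DDE and to get $\pi_2 = (u_1)_t$, whereas you rederive both directly from the block-triangular form of $L$ and the constraint $(\pi_2 u)(0)=\pi_1 u$ in $\cZ$; and you actually supply the range argument ($u_1\in W^{1,2}(0,\tau)$ since $\dot u_1 = vu_1-\phi(\blank-\tau)\in L^2(0,\tau)$, with the $\cZ$-compatibility condition tautological at $t=\tau$), which the paper simply declares ``straightforward to check.'' Both of these are welcome additions, not a different route.
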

\begin{proof}
Let us compute the linear flow $u(t)=T(t)\vecd{x}{\phi}$ for $\vecd{x}{\phi}\in\cX$.
We write $u_i=\pi_i(u)$, for $i=1,2$.
We start with $\vecd{x}{\phi}\in D(L)=\cZ$. Proposition~3.9 in \cite{batkai_semigroups_2005} shows that $u_1$ is solution of 
\begin{equation*}
\begin{aligned}
  \dot u_1(t) &= v u_1(t) + \Phi ((u_1)_t), & & t\geq 0\\
  u_1(0)&=x, & &\\
  u_1(\theta)& = \phi(\theta), & & t\in [-\tau,0)
\end{aligned}
\end{equation*}
and $u_2(t) = (u_1)_t$.
We have $\dot u_1(t) = v u_1(t) - u_1(t-\tau)$ and for $t\in[0,\tau]$, we get 
\begin{equation}\label{eq:ddesolution}
u_1(t) = e^{vt}x-\int_0^te^{v(t-s)}\phi(s-\tau)ds.
\end{equation}
We thus have the expression of $u(t)=T(t)\vecd{x}{\phi}$ for $t\in[0,\tau]$ on $\cZ$.
Further, as $\cZ$ is dense in $\cX$, we can consider a sequence such that
$\cZ\ni\vecd{x_n}{\phi_n}\to \vecd{x}{\phi}\in\cX$. The above analytical expression
shows that the $\pi_1(T(t)\vecd{x_n}{\phi_n})$ converges to the right hand side of
\eqref{eq:ddesolution} in $\cX$, namely 
\[
\pi_1(T(t)\vecd{x}{\phi}) =
e^{vt}x-\int_0^te^{v(t-s)}\phi(s-\tau)ds. 
\]
Proposition~3.11 in
\cite{batkai_semigroups_2005} shows that the second component
$\pi_2(T(t)\vecd{x}{\phi})$ is actually $(u_1)_t$. We thus have found the expression
of $T(t)$ for $t\in[0,\tau]$ in the whole space $\cX$. 
The last statement is straightforward to check, using an approach similar to the proof of Lemma~\ref{lemma:DDEspectrum}.
\end{proof}

Let us consider a general $t=n\tau+s$ with $s\in[0,\tau)$, the semigroup property
gives $T(t) = T(\tau)^nT(s)$ and we thus have the expression of $T(t)$ for any $t\geq
0$. We also assume that Proposition~\ref{prop:expEstimates} holds for the above
semigroup, this is usually proved \cite{veltz_center_2013,hale_introduction_2014} by
showing that $T$ is eventually norm continuous. Therefore, we have everything at hand
to define $K_{su}f$ as above.

In order to check \cref{hyp:linearEquation}, we need to:
\begin{enumerate}
\item show that $K_{su}\in \mathcal L(C_\eta(\RSet,\cX_{su}),C_\eta(\mathbb R,\cZ_{su}))$, 
\item show that $K_{su}f\in C^1(\RSet, \cX)$,
\item show that equality \eqref{eq:linProbFastSubs} holds in $\cX$ for all $t\in I$.	
\end{enumerate}

We will not complete all the steps above, as this is laborious and
parallels the proof of \cite[Proposition~C4]{veltz_center_2013}. Instead, we
underline some salient points which are different from the previous examples, the
most notable one being the gain of regularity of the solution, that is, $K_{su}f$
belongs to $\cZ$, and is differentiable in $\cX$. 

An additional technical point which proves useful is the following. From the proof of the center manifold theorem in \cite{Haragus2010}, it can be noted
that the linear operator $K_{su}$ is always applied to vectors such as $P_{su}R(u)$,
$P_{su}DR(u),\cdots$. Given the particular form of these vectors
$P_{su}\begin{pmatrix}\alpha\\0\end{pmatrix}$ for some $\alpha\in\mathbb R$ stemming
from \eqref{eq:DDELR}, we only have to solve \eqref{eq:linProbFastSubs} for
functions $f$ which have the same shape, \textit{i.e.} that belong to the specific linear subspace $P_{su}(\RSet\times\{0\})$ of $\cX$.

We would wish to proceed as in the proof of
Proposition~\ref{prop:NFHypLinearEquation}, but we must adapt it
because the spaces $\cZ,\cX$ are different in the present case and this requires to
show the gain of regularity. This gain of regularity from $\cX$ to $\cZ$ is provided
below. For the continuity of $K_{su}$ from $C_\eta(\mathbb R,\cX_{su})$ in
$C_\eta(\mathbb R,\cZ_{su})$, we refer to \cite[Proposition~C4]{veltz_center_2013}. 

\begin{lemma}\label{lemma:hyp5.3-dde}
Let $f\in C_\eta(\RSet,\RSet)$ and consider the stable component of $K_{su}$:
  \[
  u(t) = \int_{-\infty}^tT(t-r)P_s\vecd{f(r)}{0}dr.
  \]
Then for all $t \in \RSet$ $u(t)\in\cZ$, $u$ is differentiable in $\cX$ and $\dot
u(t)=Lu(t)+P_s\vecd{f(t)}{0}$.
\end{lemma}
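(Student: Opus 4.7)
The plan is to verify three successive properties of $u$: exponential boundedness in $\cX$, membership in $\cZ = D(L)$, and the asserted differential equation in $\cX$.

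For boundedness, I would mimic the stable-part estimate in the proof of \cref{prop:NFHypLinearEquation}, using the exponential bound $\|T(s)P_s\|_{\mathcal L(\cX)} \leq Me^{-\beta_s s}$, which is available for the DDE semigroup via its eventual norm continuity (as indicated just after \cref{lemma:ddeT}). This yields $u \in C_\eta(\RSet, \cX_{su})$ together with the estimate $\|u(t)\|_\cX \leq (M/(\beta_s - \eta))\,\|f\|_\eta\, e^{\eta|t|}$ for $\eta < \beta_s$, and continuity of $t \mapsto u(t)$ in $\cX$ by dominated convergence.

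The heart of the proof is showing $u(t) \in \cZ$. For each $t \in \RSet$, I would split the integral at $r = t - \tau$ and apply the semigroup identity to obtain
\[
u(t) = T(\tau)\,u(t-\tau) + w(t), \qquad w(t) := \int_{t-\tau}^t T(t-r) P_s \vecd{f(r)}{0}\, dr.
\]
The first summand lies in $\cZ$ because $u(t-\tau) \in \cX$ by Step 1 and $\range T(\tau) \subset \cZ$ by the last statement of \cref{lemma:ddeT}. To treat the second summand, I would exploit the one-dimensional range of the forcing highlighted in the excerpt: setting $\psi := P_s \vecd{1}{0}$ and changing variables $s = t-r$, one rewrites $w(t) = \int_0^\tau f(t-s)\, T(s)\psi\, ds$, which reduces the question to the regularity of a single scalar convolution against the concrete function $s \mapsto T(s)\psi$ whose components are written explicitly in \cref{lemma:ddeT}. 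From that expression one verifies, component by component, that $\pi_2(w(t))$ admits a weak derivative in $L^2((-\tau,0),\RSet)$ and that the boundary compatibility $\pi_2(w(t))(0) = \pi_1(w(t))$ holds, the latter being inherited from the identity $\pi_2(T(s)\psi)(0) = \pi_1(T(s)\psi)$ valid for every $s \geq 0$. Hence $w(t) \in \cZ$ and therefore $u(t) \in \cZ$.

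Granted $u(t) \in D(L)$, differentiability and the ODE follow from the standard telescoping identity
\[
\frac{u(t+h) - u(t)}{h} = \frac{T(h) - \id}{h}\,u(t) + \frac{1}{h}\int_t^{t+h} T(t+h-r)\,P_s \vecd{f(r)}{0}\, dr,
\]
where $h \to 0$ in $\cX$ sends the first term to $Lu(t)$ (because $u(t) \in D(L)$ and $L$ generates $T$) and the second to $P_s \vecd{f(t)}{0}$ by strong continuity of $T$ and continuity of $f$. The technical obstacle is precisely the intermediate step: a mild solution with merely continuous forcing need not lie in $D(L)$ against a generic $C_0$-semigroup, and the rescue here is both the DDE-specific smoothing $\range T(\tau) \subset \cZ$ and the one-dimensional range of the forcing, which together reduce the regularity question to an explicit scalar convolution that can be analysed directly.
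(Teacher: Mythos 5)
Your proof follows essentially the same route as the paper's: the three-step structure (exponential boundedness, membership in $\cZ$, telescoping identity for differentiability) is identical, and your decomposition $u(t) = T(\tau)\,u(t-\tau) + w(t)$ is, after a change of variables, literally the same splitting the paper uses, with the second piece landing in $\cZ$ via $\range T(\tau)\subset\cZ$ and the residual piece $w(t)$ handled through the explicit one-dimensional convolution against $s\mapsto T(s)P_s\vecd{1}{0}$. The only cosmetic difference is that you phrase the boundary-compatibility check as inherited from $\pi_2(T(s)\psi)(0)=\pi_1(T(s)\psi)$ while the paper folds that observation into its explicit computation of $\pi_2(u^1(t))(\theta)$; both ultimately rest on the same explicit formula from \cref{lemma:ddeT}.
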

\begin{proof}
\cref{prop:expEstimates} allows to give a meaning to the expression of $u$ by
showing that it is well defined and finite. Here we first show that $u(t)\in\cZ$.
Using a change of variables, we find
\[
u(t) =
\int_{t-\tau}^tT(t-r)P_s\vecd{f(r)}{0}dr+T(\tau)\int_{-\infty}^{t}T(t-r)P_s\vecd{f(r-\tau)}{0}dr.
\]
By \cref{lemma:ddeT}, the second term is in $\cZ$, we therefore focus on the first
term, which we denote by $u^1$. We fix $r \in \RSet$ and introduce
\[
  h(t,r) =
  \begin{cases}
    \pi_1T(t)P_s\vecd{f(r)}{0}, & \text{ if $t \geq 0$, } \\
    f_2(r)(t) = 0             , & \text{ if $t\in[-\tau,0)$.}
  \end{cases}
\]
We know from the previous lemma \cref{lemma:ddeT}
that 
\[
  \pi_2T(t)P_s\vecd{f(r)}{0}(\theta)=h(t+\theta,r) \qquad
  \textrm{for all $\theta\in[-\tau,0]$}\,.
\]
Owing to \cref{lemma:ddeT}, the only point left to show is $\pi_2(u^1(t))\in
W^{1,2}((-\tau,0),\RSet),\RSet)$. For all $\theta\in[-\tau,0]$ we have
\[
\pi_2(u(t))(\theta) = \int_{t-\tau}^{t+\theta}h(t-r+\theta,r)dr =
\int_{t-\tau}^{t+\theta} e^{v(t-r+\theta)}f(r)dr.
\]
Since $f\in C(\RSet,\RSet)$, it follows that $\pi_2(u^1(t))\in
W^{1,2}((-\tau,0),\RSet),\RSet)$ and $u(t)\in\cZ$.

Finally, we address the differentiability of the solution. For a fixed $t\in\RSet$
and $\varepsilon$, one finds
\[
\frac1\eps(u(t+\eps) - u(t)) = \frac1\eps(T(\eps)u(t)-u(t) )+
\frac1\eps\int_t^{t+\eps}T(t+\eps-r)P_s\vecd{f(r)}{0}dr.
\] 
By continuity of $f$, the last term converges to $P_s\vecd{f(r)}{0}$ in $\cX$ as
$\eps$ tends to zero. As $u(t)\in\cZ$ which is the domain of $L$,
\[
  \frac{T(\eps)u(t)-u(t)}{\eps} \to Lu(t) \qquad \textrm{as $\eps \to 0$.}
\]
Hence, $u$ is differentiable in $\cX$ and satisfies
\eqref{eq:linProbFastSubs} in $\cX$.
\end{proof}

\subsubsection{Slow-passage through Hopf bifurcation}
Looking for purely imaginary eigenvalues $\pm i\omega$ to the equation
$v-i\omega=e^{-i\omega\tau}$, one finds that this requires $|v|\leq 1$ by taking the
real part. By taking the norm, we find $\omega=\pm\sqrt{1-v^2}$. Then the Hopf
bifurcation point $v_H$ is solution of $v_H = \cos(\tau\sqrt{1-v_H^2})$.
We now consider such value of $v$ for which $\sigma_c=\{\pm i\omega\}$ with
$\omega>0$. We can henceforth proceed as in section \ref{subsec:FiniteDimResult} because the
normal form and the slow system is identical to the present one.

\section{Conclusions} \label{sec:conclusions}
In this paper, we have provided new local results on the outstanding problem of
proving the existence of canard solutions in infinite-dimensional
slow-fast dynamical systems as well as delayed bifurcation scenarios in this context. 
Namely, we have addressed the general case of systems
with $m$ slow variables and infinitely many fast variables, that is, systems for which
the fast component lives in a Banach space. In this general context, we have
proven center manifold reductions near points where normal hyperbolicity was lost as
these correspond to where canard dynamics can emerge. This effectively enabled us to
find local coordinates in which the original infinite-dimensional problem reduces to
an $m$-slow/$n$-fast system, where standard results from canard theory and delayed
bifurcations of ODEs apply. Therefore, we have obtained the existence of local canard
segments as well as slow passages through bifurcations in the general framework of
dynamical systems with $m$ slow and infinitely many fast variables. 
We then gave a complete rigorous description of such
results near a fold bifurcation of the original fast subsystem.
We also provided the main steps of the proofs in
other cases, like in the slow passage through a Turing bifurcation, which had not
been analysed before.
In the case of the slow passage through a Hopf bifurcation we could only show a
``short" delay since analyticity of the original problem is lost by the centre manifold
reduction. Hence, the proof of a ``long" delay is an open problem.  
Finally, we brought new results along similar lines in
slow-fast delay-differential equations. Every case that has been covered
theoretically was also accompanied by a computational example. Future work will
include connecting our local results to global ones, in order to be able to fully
describe canard solutions, in particular canard cycles, in infinite-dimensional
slow-fast dynamical systems, first with finitely many slow variables and then in
systems where both fast and slow variables are infinite dimensional, which is a
substantially more difficult problem. Yet the present work constitutes
a necessary rigorous initial first step towards achieving this research programme.

\section*{Acknowledgements}
We wish to thank Gr\'egory Faye for useful discussions regarding global orbits in
infinite-dimensional systems, and Anthony Roberts for sharing his survey on
centre-manifold reductions in infinite-dimensional systems.

\appendix 
\section{Slow-fast concepts for ODEs}\label{sec:slowFastODEs}
Here we recall basic notions about slow-fast (finite-dimensional) dynamical systems
and elements of geometric singular perturbation theory (GSPT)
\cite{Fenichel:1979,J95,GSPTphonebook,W20}.

\subsection{The standard GSPT setting}
For ease of reference, we recall the singular perturbation problem \eqref{eq:slowFastFin},
\begin{equation}\label{eq:slowFastFin-App}
\begin{aligned}
  \dot{u}& = F(u,v,\mu,\eps)\\
  \dot{v}& = \eps G(u,v,\mu,\eps),
\end{aligned}
\end{equation}
where $(u,v)\in\RSet^n\times\RSet^m$, $F$ and $G$ are smooth functions on
$\RSet^n\times\RSet^m\times\RSet^p\times \RSet$, and $0<\eps\ll 1$ is the timescale separation
parameter. The overdot ($\,\dot{}=d/dt$) refers to the fast timescale $t$ and, hence,
we refer to \eqref{eq:slowFastFin-App} as the fast system. Alternatively, we may
introduce the time transformation $d\tau=\eps\, dt$ which transforms
\eqref{eq:slowFastFin-App} to the equivalent {\em slow system}
\begin{equation}\label{eq:slowFastFinSlowTime}
\begin{aligned}
  \eps u'& = F(u,v,\mu,\eps)\\
      ~~v'& = G(u,v,\mu,\eps),
\end{aligned}
\end{equation}
where prime ($'=d/d\tau$) refers now to the slow timescale  $\tau$.
System \eqref{eq:slowFastFin-App} respectively \eqref{eq:slowFastFinSlowTime} are
topologically equivalent and solutions often consist of a mix of slow and fast
segments reflecting the dominance of one time scale or the other. 
As $\eps \to 0$, the trajectories of \eqref{eq:slowFastFin-App} converge during fast
segments to solutions of the $n$-dimensional \emph{layer problem}
\begin{equation}\label{eq:FastSubsys}
\begin{aligned}
  \dot{u}& = F(u,v,\mu,0)\\
  \dot{v}& = 0,
\end{aligned}
\end{equation}
while during slow segments, trajectories of~\eqref{eq:slowFastFinSlowTime} converge to solutions of
\begin{equation}\label{eq:SlowSubsys}
\begin{aligned}
          0& = F(u,v,\mu,0)\\
      ~~v'& = G(u,v,\mu,0),
\end{aligned}
\end{equation}
which is a $m$-dimensional differential-algebraic problem called  the \emph{reduced problem\/}. GSPT uses these lower-dimensional subsystems \eqref{eq:SlowSubsys} and \eqref{eq:FastSubsys} to predict the dynamics of the full $(n+m)$-dimensional system~\eqref{eq:slowFastFinSlowTime} or~\eqref{eq:slowFastFin-App} for $\eps > 0$.

\begin{definition}
The set 
\begin{equation}\label{def:S}
  S := \{(u, v) \in \RSet^n \times \RSet^m \;|\; F(u, v,\mu, 0) = 0 \}
\end{equation} 
is the set of equilibria of \eqref{eq:FastSubsys}. In general, this set $S$ defines a regular $m$-dimensional differentiable manifold, i.e. the Jacobian $D_{(u,v)}F$ evaluated along $S$ has full (row) rank $n$. We refer to $S$ as the {\em critical manifold}.
\end{definition}
\begin{remark}
In general, the set of singularities $S$ could be a union of disjoint manifolds or a union of manifolds intersecting along lower dimensional submanifolds. The theory exists for these cases as well.
\end{remark}
The existence of an $m$-dimensional manifold $S$ in \eqref{eq:FastSubsys} implies that its Jacobian evaluated along $S$ has at least $m$ zero eigenvalues corresponding to the $m$-dimensional tangent space $T_{(u,v)}S$ of $S$ at each point $(u,v)\in S$. We call these $m$ eigenvalues {\em trivial} associated with the $\dot{v}=0$ subsystem in \eqref{eq:FastSubsys}. The remaining $n$ eigenvalues are called {\em nontrivial} associated with the $n$-dimensional submatrix $D_uF|_S$.

\subsection{Normal hyperbolicity}
\begin{definition}
A subset $S_h \subseteq S$ is
called {\em normally hyperbolic} if all $(u,v) \in S_h$ are hyperbolic
equilibria of the layer problem \eqref{eq:FastSubsys}, i.e., the non-trivial eigenvalues have all nonzero real part. 
\begin{itemize}
\item We call a normally hyperbolic subset $S_a \subseteq S$ \emph{attracting\/} if all non-trivial eigenvalues have negative real parts for $(u,v)\in S_a$.
\item $S_r\subseteq S$ is called \emph{repelling\/} if all non-trivial eigenvalues have positive real parts for $(u,v)\in S_r$.
\item If $S_s \subseteq S$ is normally hyperbolic
and neither attracting nor repelling we say it is of \emph{saddle
type\/}.
\end{itemize}
\end{definition}
Normal hyperbolicity induces a (unique) splitting of the corresponding tangent space along $S$, i.e.~
\begin{equation}\label{def:split}
T_{(u,v)}\RSet^{n+m}= T_{(u,v)}S\oplus {\cal N}_{(u,v)}\,,\quad \forall (u,v)\in S_h\,,
\end{equation}
where $T_{(u,v)}S$ and ${\cal N}_{(u,v)}$ are invariant subspaces under the Jacobian 
$$
J=
\begin{pmatrix}
D_uF & D_vF\\
0 & 0
\end{pmatrix}
$$
of \eqref{eq:FastSubsys}. More precisely, $T_{(u,v)}S$ is in the kernel of $J$ and,
hence, corresponds to the $m$ trivial eigenvalues. This induces a linear map on the
$n$-dimensional quotient space ${\cal N}_{(u,v)}=T_{(u,v)}\RSet^{n+m}/T_{(u,v)}S$
corresponding to the $n$ non-trivial eigenvalues of $J$. The disjoint union of all
$T_{(u,v)}S$, $TS=\cup\, T_{(u,v)}S$, forms the {\em tangent bundle} along $S$, while
the disjoint union of all ${\cal N}_{(u,v)}$, ${\cal N}=\cup\, {\cal N}_{(u,v)}S$,
forms the corresponding {\em fast fibre bundle}.\\

The reduced problem~\eqref{eq:SlowSubsys} is a differential algebraic problem and
describes the evolution of the slow variables $v$ constrained to the critical
manifold $S$. As a consequence,  $S$ defines an interface between the two sub-systems
\eqref{eq:FastSubsys} and \eqref{eq:SlowSubsys}. 
%
%
In the case of normal hyperbolicity, the reduced vector field can be defined as a dynamical system
by appealing to the (unique) splitting \eqref{def:split} which defines a unique projection operator 
$$\Pi^{S_h}: T\RSet^{n+m}|_{S_h}= TS_h \oplus {\cal N} \to TS_h\,,$$ 
a map from $T\RSet^{n+m}$ onto the base space $TS_h$ along ${\cal N}$. This projection operator can be calculated explicitly, i.e. the reduced problem \eqref{eq:SlowSubsys} along $S$ is given by the following dynamical system
\begin{equation}
\begin{aligned}\label{eq:reduced-gen}
\begin{pmatrix}
\dot{u}\\
\dot{v}
\end{pmatrix}
=
\Pi^{S_h}
\begin{pmatrix}
F_1(u,v,\mu,0)\\
G(u,v,\mu,0)
\end{pmatrix}
& =
\begin{pmatrix}
0 & -(D_uF)^{-1}D_vF\\
0 & 1
\end{pmatrix}
\begin{pmatrix}
F_1(u,v,\mu,0)\\
G(u,v,\mu,0)
\end{pmatrix}
\\
& =
\begin{pmatrix}
-(D_uF)^{-1}D_vF G(u,v,\mu,0)\\
G(u,v,\mu,0)
\end{pmatrix}
\,,
\end{aligned}
\end{equation}
where $F_1(u,v,\mu,0)$ is the $O(\eps)$ term in the power series expansion of the first component $F(u,v,\mu,\eps)$ with respect to  $\eps$ of the vector field \eqref{eq:slowFastFin-App}; $G(u,v,\mu,0)$ is the corresponding other (leading order) slow component. Note that $D_uF$ is a regular square matrix due to normal hyperbolicity. This regularity implies via the {\em implicit function theorem} that the critical manifold has a graph representation $S=\{(u,v)\in\RSet^{n+m}: u=h(v)\}$.
Hence, in the case of normal hyperbolicity, the reduced problem \eqref{eq:SlowSubsys} can be studied in this coordinate chart $v$ given by
\begin{equation}\label{reduced:NH}
\dot{v} = G(h(v),v,\mu,0)\,.
\end{equation}

The notion  of normal hyperbolicity  is central to the study of systems of the
form~\cref{eq:slowFastFin-App} as key assumption for the persistence of segments of
$S$, which are invariant for $\eps=0$, as local invariant \textit{slow manifold}
$S_\eps$ for small enough $\eps>0$. These major results have been obtained in the
mid-1970s by Neil Fenichel~\cite{Fenichel:1979}.

\subsection{Loss of normal hyperbolicity}
Geometrically, loss of normal hyperbolicity occurs (generically) along
codimension-one submanifold(s) of $S$ where nontrivial eigenvalue(s) of the layer
problem crosses the imaginary axis. Within this set $S\backslash S_h$, we distinguish
two subsets ${\cal AH}$ and ${\cal F}$.
\begin{definition}
Consider the two cases of loss of normal hyperbolicity associated with the two
generic codimension-one bifurcations in the layer problem \eqref{eq:FastSubsys}:
\begin{itemize}
\item  the set ${\cal AH}$ of Andronov-Hopf points associated with the crossing of a
  pair of complex conjugate eigenvalues (with nonzero imaginary part),
\item the set ${\cal F}$ of folds or saddle-node points associated with the crossing
  of a real eigenvalue\,.
\end{itemize}
\end{definition}
The layer problem is thus considered a bifurcation problem; see, e.g.,
\cite{kuznetsov2013elements}. The main question is what happens to this bifurcation
structure as the singular bifurcation parameter $0<\eps\ll 1$ is turned on, i.e.~when
the `bifurcation parameter' $v$ starts to evolve slowly. To answer this question, we
need to understand the reduced problem in a neighbourhood of these codimension-one
subsets.

In the case ${\cal AH}$, we note that $D_uF$ is still a regular square matrix, i.e.
$\det (D_uF) \neq 0$ everywhere along $S$ including ${\cal AH}$. Hence the reduced
problem can be still studied in the slow coordinate chart given by
\eqref{reduced:NH}. In general, the reduced vector field obeys $G(h(v),v,\mu,0)\neq
0$ along ${\cal AH}\subset S$. Thus the reduced flow crosses from one normally
hyperbolic branch of $S$ via ${\cal AH}\subset S$ to another normally hyperbolic
branch of $S$. Assuming this reduced flow is from an attracting branch $S_a$ to a
repelling branch $S_{r/s}$ this leads to the phenomenon of delayed loss of stability
through an Andronov-Hopf bifurcation; see \cite{neishtadt87,neishtadt88,HKSW16} for
details. 

In the case ${\cal F}$, the matrix $D_uF$ is singular, i.e. $\det (D_uF) = 0$ along
${\cal F}$. So, we cannot use \eqref{reduced:NH} to describe the reduced flow. While
system \eqref{eq:reduced-gen} is also not well defined near fold(s) ${\cal F}\subset
S$, we can make an equivalence transformation as follows: First, we use the identity
$(D_uF)^{-1}= \mathrm{adj} (D_uF)/(\det D_uF)$ where $\mathrm{adj} (D_uF)$ denotes
the adjoint (or adjugate) of the matrix $D_uF$, i.e.~the transpose of the co-factor
matrix, which gives
\begin{equation}
\begin{aligned}\label{eq:reduced-gen-2}
\begin{pmatrix}
\dot{u}\\
\dot{v}
\end{pmatrix}
& =
\begin{pmatrix}
-\dfrac{1}{\det (D_uF)}\,\mathrm{adj} (D_uF)D_vF G(u,v,\mu,0)\\
G(u,v,\mu,0)
\end{pmatrix}
\,.
\end{aligned}
\end{equation}
This isolates the singularity into the scaler function $\det (D_uF)$. We note that
the matrix $\mathrm{adj} (D_uF)$ is still well defined and of rank one, when $D_uF$
has rank deficiency of one (along ${\cal F}$). Now, we can remove the scalar
singularity (pole)  by the time transformation
$$
d\tau= - \det (D_uF) d\tau_1
$$
which gives the corresponding desingularised system
\begin{equation}
\begin{aligned}\label{eq:desing}
\begin{pmatrix}
\dot{u}\\
\dot{v}
\end{pmatrix}
& =
\begin{pmatrix}
\mathrm{adj} (D_uF)D_vF G(u,v,\mu,0)\\
- \det (D_uF)G(u,v,\mu,0)
\end{pmatrix}
\,,
\end{aligned}
\end{equation}
where with a slight abuse of notation the overdot (\,$\dot{}=d/d\tau_1$) refers now
to the new slow time $\tau_1$. Importantly, \eqref{eq:desing} represents a dynamical
system without singularities that can be analysed by standard dynamical systems
tools even in a neighbourhhood of ${\cal F}\subset S$. Note that systems
\eqref{eq:reduced-gen-2} and \eqref{eq:desing} are equivalent on normally hyperbolic
branches where $\det (D_uF)>0$, and these systems are also equivalent up to an
orientation change on normally hyperbolic branches where $\det (D_uF)<0$. 
\begin{definition}
Any point  $(u,v)\in {\cal F}$ where  
\begin{equation}\label{def:reg-jump}
 \mathrm{adj} (D_uF)D_vF G(u,v,\mu,0) \neq 0
 \end{equation}
is called a {\em regular jump point}. 
\end{definition}
Note, solutions of the reduced problem \eqref{eq:reduced-gen-2} in a neighbourhood of
${\cal F}$ approach regular jump points \eqref{def:reg-jump} in forward or backward
time and they cease to exist at regular jump points due to a finite time blow-up,
i.e., the reduced problem \eqref{eq:reduced-gen-2} has a pole at jump points. This
finite time blow-up of the reduced flow at a jump point  happens exactly in the
eigendirection of the defect of the layer problem which defines the locus (and
direction) where a switch from slow to fast dynamics (or vice versa) in the full
system is possible. This, together with an adequate global return mechanism via the
layer problem may provide the seed for a singular relaxation cycle, a concatenation
of slow and fast orbit segments that form a loop. For persistence results of such
relaxation cycles under small perturbations $\eps\ll 1$ we refer the reader to, e.g.,
\cite{KS01b,SW04}.

\begin{definition}
Any point  $(u,v)\in {\cal F}$ where  
\begin{equation}\label{def:fold-sing}
 \mathrm{adj} (D_uF)D_vF G(u,v,\mu,0) = 0
 \end{equation}
is called a {\em folded singularity}. 
\end{definition}
Folded singularities are equilibria of the desingularised problem \eqref{eq:desing}
but not necessarily of the corresponding reduced problem \eqref{eq:reduced-gen-2}
itself. At such a folded singularity, the vector field of \eqref{eq:reduced-gen-2}
contains indeterminate forms which could prevent a finite time blow-up of certain
solutions  of the reduced problem approaching a folded singularity. Such special
solutions of the reduced problem that are able to pass in finite time through such a
folded singularity from one branch of $S$ to another are called {\em canards}, and
they play an important role in understanding the genesis of relaxation oscillations
as well as in creating more complex oscillatory patterns in singular perturbation
problems. The terminology `canard' was introduced in \cite{BCDD1981}, and
we refer the interest reader to important persistence results in, e.g.,
\cite{DR96,KS01b,Ben83,SW01,W05,W12}.

\bibliography{references}
\bibliographystyle{siamplain}
 
\end{document}